\numberwithin{equation}{section}
\numberwithin{figure}{section}
\theoremstyle{plain}
\newtheorem*{thm*}{\protect\theoremname}
\theoremstyle{plain}
\newtheorem*{cor*}{\protect\corollaryname}
\theoremstyle{plain}
\newtheorem{thm}{\protect\theoremname}[section]
\theoremstyle{plain}
\newtheorem{cor}[thm]{\protect\corollaryname}
\theoremstyle{remark}
\newtheorem{rem}[thm]{\protect\remarkname}
\theoremstyle{plain}
\newtheorem{lem}[thm]{\protect\lemmaname}
\theoremstyle{plain}
\newtheorem{prop}[thm]{\protect\propositionname}
\providecommand{\corollaryname}{Corollary}
\providecommand{\lemmaname}{Lemma}
\providecommand{\propositionname}{Proposition}
\providecommand{\remarkname}{Remark}
\providecommand{\theoremname}{Theorem}
\begin{document}

\title{On self-similar measures with absolutely continuous projections and
dimension conservation in each direction}

\author{\noindent Ariel Rapaport}

\date{\noindent September 26, 2018}

\subjclass[2000]{\noindent Primary: 28A80, Secondary: 28A78.}

\keywords{\noindent Self-similar sets and measures, absolute continuity, dimension
conservation.}
\begin{abstract}
Relying on results due to Shmerkin and Solomyak, we show that outside
a $0$-dimensional set of parameters, for every planar homogeneous
self-similar measure $\nu$, with strong separation, dense rotations
and dimension greater than $1$, there exists $q>1$ such that $\{P_{z}\nu\}_{z\in S}\subset L^{q}(\mathbb{R})$.
Here $S$ is the unit circle and $P_{z}w=\left\langle z,w\right\rangle $
for $w\in\mathbb{R}^{2}$. We then study such measures. For instance,
we show that $\nu$ is dimension conserving in each direction and
that the map $z\rightarrow P_{z}\nu$ is continuous with respect to
the weak topology of $L^{q}(\mathbb{R})$.
\end{abstract}

\maketitle

\section{\label{sec:Introduction}Introduction}

Denote by $\mathbb{D}$ the open unit disc in $\mathbb{C}$ and let
$0\ne\lambda\in\mathbb{D}$ be with $\arg\lambda\notin\pi\mathbb{Q}$.
Consider a homogeneous IFS on $\mathbb{C}$
\begin{equation}
\{\varphi_{i}(w)=\lambda w+a_{i}\}_{i\in\Lambda},\label{eq:IFS intro}
\end{equation}
with the strong separation condition (SSC), and a self-similar measure
\begin{equation}
\nu=\sum_{i\in\Lambda}p_{i}\cdot\varphi_{i}\nu\:.\label{eq:SSM intro}
\end{equation}
It is among the most basic planar self-similar measures. Hence it
is a natural question in fractal geometry to study the dimension and
continuity of the projections $\{P_{z}\nu\}_{z\in S}$ and slices
\[
\{\{\nu_{z,w}\}_{w\in\mathbb{C}}\::\:z\in S\}\:.
\]
Here $S$ is the unit circle of $\mathbb{C}$, $P_{z}w=\mathrm{Re}(z\cdot\overline{w})$
for $w\in\mathbb{C}$, and $\{\nu_{z,w}\}_{w\in\mathbb{C}}$ is the
disintegration of $\nu$ with respect to $P_{z}^{-1}(\mathcal{B})$,
where $\mathcal{B}$ is the Borel $\sigma$-algebra.

Dimensionwise, the behaviour of the projections is as regular as possible.
Indeed, Hochman and Shmerkin \cite{HS} have proven that $P_{z}\nu$
is exact dimensional, with
\[
\dim P_{z}\nu=\min\{1,\dim\nu\},
\]
for each $z\in S$. A version of this, for self-similar sets with
dense rotations, was first proven by Peres and Shmerkin \cite{PS}.
Considering the absolute continuity of the projections, Shmerkin and
Solomyak \cite{SS1} have shown, assuming $\dim\nu>1$, that the set
\[
E=\{z\in S\::\:P_{z}\nu\mbox{ is singular}\}
\]
has zero Hausdorff dimension.

Let us turn to discuss the concept of dimension conservation and the
dimension of slices. A Borel probability measure $\mu$ on $\mathbb{C}$
is said to be dimension conserving (DC), with respect to the projection
$P_{z}$, if
\[
\dim_{H}\mu=\dim_{H}P_{z}\mu+\dim_{H}\mu_{z,w}\mbox{ for \ensuremath{\mu}-a.e. \ensuremath{w\in\mathbb{C}}},
\]
where $\dim_{H}$ stands for Hausdorff dimension. It always holds
that $\mu$ is DC with respect to $P_{z}$ for almost every $z\in S$.
This follows from results, valid for general measures, regarding the
typical dimension of projections (see \cite{HK}) and slices (see
\cite{JM}). Falconer and Jin \cite{FJ1} have shown that $\mu$ is
DC, with respect to $P_{z}$ for all $z\in S$, whenever $\mu$ is
self-similar with a finite rotation group. An analogues statement,
for self-similar sets with the SSC, was first proven by Furstenberg
\cite{Fur}. Another related result for sets is due to Falconer and
Jin \cite{FJ2}. They showed that if $K\subset\mathbb{C}$ is self-similar,
with $\dim K>1$ and a dense rotation group, then for every $\epsilon>0$
there exists $N_{\epsilon}\subset S$, with $\dim_{H}N_{\epsilon}=0$,
such that for $z\in S\setminus N_{\epsilon}$ the set
\[
\{x\in\mathbb{R}\::\:\dim_{H}(K\cap P_{z}^{-1}\{x\})>\dim K-1-\epsilon\}
\]
has positive length.

Taking these results into account, one might expect the sets $E$,
defined above, and
\[
F=\{z\in S\::\:\nu\mbox{ is not DC with respect to \ensuremath{P_{z}}}\}
\]
to be empty whenever the dimension of $\nu$ exceeds $1$. Nevertheless,
in \cite{Rap2} the author has constructed an example, of a measure
$\nu$ as above, for which $E$ and $F$ are nonempty and even residual.
Moreover, in this example there exists a $G_{\delta}$-subset of directions
$z\in S$ for which $\nu_{z,w}$ is discrete, and hence has dimension
$0$, for $\nu$-a.e. $w\in\mathbb{C}$.

In this paper we shall show that, despite of the last result, it is
typically the case that $E$ and $F$ are empty. More precisely, we
shall prove the following theorem. A sharper version of it is stated
in Section \ref{sec:Statement-of-results}.
\begin{thm*}
There exists a set $\mathcal{E}\subset\mathbb{D}$, with $\dim_{H}\mathcal{E}=0$,
such that the following holds. Let $\lambda\in\mathbb{D}\setminus\mathcal{E}$
be with $\arg\lambda\notin\pi\mathbb{Q}$, let $\nu$ be a self-similar
measure with the SSC as in (\ref{eq:SSM intro}), and assume that
$\dim_{H}\nu>1$. Then there exists $1<q<\infty$ such that,
\begin{enumerate}
\item \label{enu:part 1}$P_{z}\nu\in L^{q}(\mathbb{R})$ for each $z\in S$;
\item \label{enu:part 2}the map which takes $z\in S$ to $P_{z}\nu\in L^{q}(\mathbb{R})$
is continuous with respect to the weak topology of $L^{q}(\mathbb{R})$;
\item \label{enu:part 3}for each $z\in S$ the measure $\nu_{z,w}$ has
exact dimension $\dim_{H}\nu-1$ for $\nu$-a.e. $w\in\mathbb{C}$.
\end{enumerate}
\end{thm*}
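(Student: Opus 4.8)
\emph{Strategy.} The plan is to establish the three assertions in turn, with (1) the main input for (2) and (3). For (1) I will invoke the Shmerkin--Solomyak machinery in the quantitative parametrized form to be stated in Section~\ref{sec:Statement-of-results}: outside a zero-dimensional set of $\lambda\in\mathbb D$, their $L^q$-type absolute continuity criterion applies to $P_1\nu$ for \emph{every} admissible choice of translations $(a_i)$ and weights $(p_i)$ with $\dim_H\nu>1$, with a single exponent $q=q(\dim_H\nu)>1$ and with a bound on $\|P_1\nu\|_q$ that is uniform as the data range over a compact set of admissible parameters. Homogeneity is what lets one pass from a single direction to all of them: conjugating the IFS $\{\lambda w+a_i\}$ by a rotation leaves $\lambda$ unchanged and merely rotates the translation vector, so $P_z\nu$ equals $P_1$ of the homogeneous self-similar measure with the same $\lambda$ and weights but the rotated translations, and as $z$ runs over $S$ this translation data runs over a compact circle in parameter space. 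Hence (1) follows, together with the uniform bound $M:=\sup_{z\in S}\|P_z\nu\|_q<\infty$.

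\emph{Part (2).} This will be soft, given (1). Since $1<q<\infty$, $L^q(\mathbb R)$ is reflexive, so the closed $M$-ball is weakly (sequentially) compact. The map $z\mapsto P_z\nu$ is continuous into the space of measures with the weak-$*$ topology, because $\int h\,d(P_z\nu)=\int h(P_zw)\,d\nu(w)$ and $(z,w)\mapsto h(P_zw)$ is bounded and continuous for $h\in C_b(\mathbb R)$, so dominated convergence applies. If $z_n\to z$, then every subsequence of $(P_{z_n}\nu)$ has, by the uniform bound, a further subsequence converging weakly in $L^q$ to some $g$; testing against $C_c(\mathbb R)\subset L^{q'}(\mathbb R)$ shows $g\,dx$ is also the weak-$*$ limit of the corresponding measures, hence equals $P_z\nu$. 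The limit being independent of the subsequence, $P_{z_n}\nu\rightharpoonup P_z\nu$ in $L^q$, which is (2).

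\emph{Part (3), the accessible parts.} Fix $z\in S$ and set $d=\dim_H\nu>1$. By (1), $P_z\nu$ is absolutely continuous, hence exact dimensional with $\dim P_z\nu=1$; and $\nu$ is exact dimensional, which under the SSC is immediate from coding $\nu$ as the image under the coding map $\pi\colon\Lambda^{\mathbb N}\to\mathbb C$ of the Bernoulli measure $(p_i)^{\otimes\mathbb N}$. I would first show that $\nu_{z,w}$ is exact dimensional with a $\nu$-a.e.\ constant dimension $d_z$: since passing from $\omega$ to the shifted word turns the coordinate seen by $P_z$ into an affine function of the one seen by $P_{z\bar\lambda/|\lambda|}$, the slices $\{\nu_{z,w}\}_w$ obey a recursion over the irrational rotation $z\mapsto z\bar\lambda/|\lambda|$ on $S$, and a Shannon--McMillan--Breiman argument on the resulting skew product gives exactness and the constancy of $d_z$. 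Next, $d_z\le d-1$ holds for every $z$, using only $\dim_H\nu=d$: take a Borel carrier $A$ of $\nu$ with $\dim_H A=d$ and cover it economically; the elementary half of Marstrand's slicing theorem gives $\dim_H\bigl(A\cap P_z^{-1}\{x\}\bigr)\le d-1$ for Lebesgue-a.e.\ $x$, and $\nu_{z,w}$ is carried by such a slice for $\nu$-a.e.\ $w$.

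\emph{Part (3), the lower bound, which is the main obstacle.} It remains to prove $d_z\ge d-1$, and here absolute continuity is indispensable: the example in \cite{Rap2} has $\dim_H\nu>1$ but, for certain $z$, slices that are strictly smaller, even discrete, precisely because $P_z\nu$ is singular there. The plan is to disintegrate $\nu$ over $P_z\nu=g_z\,dx$ and to control, scale by scale, the $\nu$-mass of a thin rectangle of base-width $\delta$ and fibre-height $\rho$ about a $\nu$-typical point $w$: exact dimensionality of $\nu$ bounds the masses of the level-$n$ cylinders meeting it, absolute continuity gives $P_z\nu(I)\ge c_w\,|I|$ on intervals $I$ near $P_zw$, and self-similarity ties cylinders at base-scale $\delta$ to the slice at scale $\rho$. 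The point to prove is that these facts \emph{together} force the number of non-empty cylinders inside such a rectangle to be of the right order $(\rho/\delta)^{\,d-1+o(1)}$ rather than larger --- i.e.\ that they rule out the over-clustering of cylinders along the fibres that causes the dimension drop in \cite{Rap2}; letting $\delta\to0$ would then give $\nu_{z,w}(B(w,\rho))\le\rho^{\,d-1-o(1)}$ for $\nu$-a.e.\ $w$, hence $d_z\ge d-1$, and with the upper bound this proves (3). Exact dimensionality and the value of $d_z$ could alternatively be obtained through the scenery-flow and CP-chain formalism of \cite{HS}, with the absolute continuity of $P_z\nu$ pinning the projected chain to the Lebesgue one. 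The hard part, throughout, is precisely this last step: extracting from the absolute continuity of $P_z\nu$ --- and, where needed, of projections in nearby directions --- a quantitative ``no loss along the fibres'' statement robust enough to coexist with the dense rotations; a secondary difficulty, on the input side, is to record the Shmerkin--Solomyak conclusion in the uniform parametrized shape used for (1).
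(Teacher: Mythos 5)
There are two genuine gaps. First, your argument for part (\ref{enu:part 2}) (and implicitly for part (\ref{enu:part 3})) rests on a uniform bound $M=\sup_{z\in S}\Vert P_z\nu\Vert_q<\infty$, which you claim comes packaged with a ``quantitative parametrized form'' of the Shmerkin--Solomyak input. No such uniform form is available: the results you are invoking (Theorem D of \cite{SS2}, Theorem 4.4 of \cite{SS1}, Theorem 8.2 of \cite{Sh2}) give $P_z\nu\in L^q(\mathbb{R})$ for each fixed $z$, with no control on the norm as the translation data varies, and obtaining the uniform bound is precisely one of the main contributions of the paper. It is proved there not by strengthening the input but by a separate soft argument (Proposition \ref{prop:bounded norms}): the sets $F_n=\{z:\Vert g_z\Vert_q\le n\}$ are closed by a weak-compactness argument in the reflexive space $L^q$, Baire's theorem gives an $F_n$ containing an open arc $V$, and the self-similarity identity $g_z=\sum_{\boldsymbol{i}\in\Lambda^k}p_{\boldsymbol{i}}r^{-k}\,g_{\alpha^{-k}z}\circ\tau_{r^{-k}b_{\boldsymbol{i}}}\circ M_{r^{-k}}$ (Lemma \ref{lem:SS of densities}) together with minimality of the rotation by $\alpha=\lambda/|\lambda|$ propagates the bound from $V$ to all of $S$. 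Without this, your subsequence argument for (\ref{enu:part 2}) has no weakly compact ball to work in.

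Second, for the lower bound $d_z\ge\dim_H\nu-1$ in part (\ref{enu:part 3}) you describe a rectangle/cylinder-counting scheme and explicitly flag its key step (ruling out over-clustering of cylinders along fibres) as unresolved; as written this is a plan, not a proof, and it is exactly where the example of \cite{Rap2} shows naive arguments fail. The paper's mechanism is different and more exact: under the SSC one has the identity $\nu_{z,w}(K_{\boldsymbol{i}_k(w)})=\frac{g_{\alpha^{-k}z}(P_{\alpha^{-k}z}T^kw)}{g_z(P_zw)}\,p_{\boldsymbol{i}_k(w)}r^{-k}$ (Lemma \ref{lem:slices by densities}), so the slice mass of the $k$-th cylinder is computed, not estimated. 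The uniform $L^q$ bound then gives $\nu\{w: g_{\alpha^{-k}z}(P_{\alpha^{-k}z}w)\ge k^{2/(q-1)}\}\le C^qk^{-2}$, Borel--Cantelli (applied along the $T$-orbit, using $T$-invariance of $\nu$) shows the density ratio is subexponential in $k$, and Shannon--McMillan--Breiman yields the exact dimension $\frac{H(p)}{-\log r}-1$ directly. Note that this route needs the norm bound in \emph{all} directions $\alpha^{-k}z$ simultaneously, so the two gaps are linked: without the first, the second cannot be closed this way either. Your part (\ref{enu:part 1}) outline is close in spirit to the paper's (which convolves off a factor $\nu_{\lambda^k,\mathbf a}^p$ with power Fourier decay against a measure with $D_q=1$ projections), and your part (\ref{enu:part 2}) argument is correct modulo the missing uniform bound.
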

Note that by parts (\ref{enu:part 1}) and (\ref{enu:part 3}) it
follows that $\nu$ is DC, with respect to $P_{z}$, for all $z\in S$.

Part (\ref{enu:part 1}) follows almost directly from results which
are due to Shmerkin and Solomyak \cite{SS1} and \cite{SS2} and Shmerkin
\cite{Sh1} and \cite{Sh2}. Our main contribution is the derivation
of parts (\ref{enu:part 2}) and (\ref{enu:part 3}). For this we
first show that, assuming part (\ref{enu:part 1}), the collection
$\{P_{z}\nu\}_{z\in S}$ is bounded in $(L^{q}(\mathbb{R}),\Vert\cdot\Vert_{q})$.
The reflexivity of $L^{q}(\mathbb{R})$, for $1<q<\infty$, plays
an important role here.

We obtain a much stronger form of continuity in (\ref{enu:part 2})
if instead of $\dim_{H}\nu>1$ we assume that the correlation dimension
of $\nu$ exceeds $1$. More precisely, in Theorem \ref{thm:continuous frac deriv}
it is shown that, under this stronger assumption, there exits $\gamma>0$
so that $P_{z}\nu$ lies in the $2,\gamma$-Sobolev space for each
$z\in S$ and that the map $z\rightarrow P_{z}\nu$ is continuous
with respect to the corresponding Sobolev norm.

The boundedness of $\{P_{z}\nu\}_{z\in S}$ in $L^{q}(\mathbb{R})$
gives also a result regarding the measure class of the projections.
Denote by $K$ the self-similar set corresponding to the IFS (\ref{eq:IFS intro}).
By extending an argument given in \cite{MS} and \cite{PSS}, we show
that, under the assumptions of the theorem above, the measures $P_{z}\nu$
and $\mathcal{L}|_{P_{z}(K)}$ are equivalent for all $z\in S$. Here
$\mathcal{L}$ is the Lebesgue measure on $\mathbb{R}$.

Finally, our results also apply to self-similar sets. We recall the
following definition due to Furstenberg \cite{Fur}. A subset $A\subset\mathbb{C}$
is said to be dimension conserving (DC), with respect to the projection
$P_{z}$, if for some $\delta\ge0$ 
\[
\delta+\dim_{H}\{x\in\mathbb{R}\::\:\dim_{H}\left(A\cap P_{z}^{-1}\{x\}\right)\ge\delta\}\ge\dim_{H}A,
\]
where the dimension of the empty set is defined to be $-\infty$.
From our results on measures we shall obtain the following corollary.
A sharper version of it is given in Section \ref{sec:Statement-of-results}.
Note its close connection with the priorly mentioned result from \cite{FJ2}.
\begin{cor*}
Let $\mathcal{E}\subset\mathbb{D}$ be the $0$-dimensional set from
the theorem above. Let $\lambda\in\mathbb{D}\setminus\mathcal{E}$
be with $\arg\lambda\notin\pi\mathbb{Q}$, let $K$ be a self-similar
set with the SSC corresponding to an IFS as in (\ref{eq:IFS intro}),
and assume that $s=\dim_{H}K>1$. Then there exists $c>0$ such that
for every $z\in S$,
\[
\mathcal{L}\{x\in\mathbb{R}\::\:\dim_{H}\left(K\cap P_{z}^{-1}\{x\}\right)=s-1\}>c\:.
\]
In particular, $K$ is DC with respect to $P_{z}$ for all $z\in S$. 
\end{cor*}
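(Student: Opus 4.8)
The plan is to derive the corollary from the theorem by applying it to the canonical self-similar measure supported on $K$. Let $\Lambda$ be the index set of the IFS (\ref{eq:IFS intro}) defining $K$, and let $\nu$ be the self-similar measure (\ref{eq:SSM intro}) associated with the uniform probability vector $p_i = 1/|\Lambda|$, $i \in \Lambda$. Since the IFS is homogeneous with common contraction ratio $|\lambda|$ and satisfies the SSC, one has $\dim_H \nu = \frac{\log|\Lambda|}{-\log|\lambda|} = s$, which exceeds $1$ by hypothesis; thus $\nu$ satisfies the assumptions of the theorem. This produces an exponent $1 < q < \infty$ such that $P_z\nu \in L^q(\mathbb{R})$ for every $z \in S$, such that (by the boundedness statement recorded after the theorem) $M := \sup_{z \in S} \Vert P_z\nu\Vert_q < \infty$, and such that for each $z \in S$ the slice $\nu_{z,w}$ is exact dimensional of dimension $s - 1$ for $\nu$-a.e. $w \in \mathbb{C}$.

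I would then fix $z \in S$ and pass from the measures $\nu_{z,w}$ to the geometric slices of $K$. For $\nu$-a.e. $w$ the disintegrated measure $\nu_{z,w}$ is a probability measure carried by the fibre $P_z^{-1}\{P_z w\}$ and, being a conditional measure of $\nu$, also by $\mathrm{supp}\,\nu = K$; hence $\mathrm{supp}\,\nu_{z,w} \subseteq K \cap P_z^{-1}\{P_z w\}$, and exact dimensionality of $\nu_{z,w}$ gives $\dim_H\left(K \cap P_z^{-1}\{P_z w\}\right) \ge s - 1$ for $\nu$-a.e. $w$. Pushing this forward under $P_z$, the set $\{x \in \mathbb{R} : \dim_H(K \cap P_z^{-1}\{x\}) \ge s-1\}$ has full $P_z\nu$-measure. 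The matching upper bound comes from an Eilenberg (coarea) inequality applied to the $1$-Lipschitz map $P_z$: a self-similar set with the SSC satisfies $0 < \mathcal{H}^s(K) < \infty$, so $\int^* \mathcal{H}^{s-1}(K \cap P_z^{-1}\{x\})\,dx < \infty$, whence $\dim_H(K \cap P_z^{-1}\{x\}) \le s - 1$ for $\mathcal{L}$-a.e. $x$, and, since $P_z\nu \ll \mathcal{L}$ by part (\ref{enu:part 1}), also for $P_z\nu$-a.e. $x$. Combining the two bounds, $H_z := \{x \in \mathbb{R} : \dim_H(K \cap P_z^{-1}\{x\}) = s - 1\}$ has full $P_z\nu$-measure.

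It then remains only to bound $\mathcal{L}(H_z)$ from below uniformly in $z$, and this quantitative step --- which rests on the $L^q$-boundedness of $\{P_z\nu\}_{z\in S}$ rather than on a single-direction estimate --- is the one point I expect to require genuine care; everything else is bookkeeping. Writing $f_z$ for the density of $P_z\nu$ and $q' = q/(q-1)$, Hölder's inequality yields $1 = (P_z\nu)(H_z) = \int_{H_z} f_z \, d\mathcal{L} \le \Vert f_z\Vert_q\,\mathcal{L}(H_z)^{1/q'} \le M\,\mathcal{L}(H_z)^{1/q'}$, so $\mathcal{L}(H_z) \ge M^{-q'} =: c > 0$, with $c$ independent of $z$; this is precisely the asserted inequality. (Alternatively, the same Hölder estimate gives $\mathcal{L}(P_z(K)) \ge M^{-q'}$, which combined with the equivalence $P_z\nu \sim \mathcal{L}|_{P_z(K)}$ established earlier again yields a uniform lower bound on $\mathcal{L}(H_z)$.) Finally, taking $\delta = s - 1 \ge 0$ in Furstenberg's definition, the set $\{x : \dim_H(K \cap P_z^{-1}\{x\}) \ge \delta\} \supseteq H_z$ has positive Lebesgue measure and therefore Hausdorff dimension $1$, so $\delta + \dim_H\{x : \dim_H(K \cap P_z^{-1}\{x\}) \ge \delta\} \ge (s-1) + 1 = s = \dim_H K$; that is, $K$ is DC with respect to $P_z$, for every $z \in S$.
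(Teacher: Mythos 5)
Your proposal is correct and follows essentially the same route as the paper: apply the theorem to the uniform self-similar measure $\nu$ (a constant multiple of $\mathcal{H}^{s}|_{K}$), get the lower bound on slice dimension from exact dimensionality of the conditionals, the a.e.\ upper bound from the coarea-type inequality for $\mathcal{H}^{s}$ (the paper cites Falconer, Theorem 5.8, for the same fact), and then the uniform lower bound on Lebesgue measure via H\"{o}lder against the uniformly $L^{q}$-bounded densities --- the paper does exactly this with $q=2$ and Cauchy--Schwarz. The only difference is that the paper's full version of the corollary also establishes positivity of the packing measure $\mathcal{P}^{s-1}$ of the slices, which requires extra work not needed for the statement you were asked to prove.
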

The rest of this paper is organized as follows. In Section \ref{sec:Preliminaries}
we give the necessary definitions. In Section \ref{sec:Statement-of-results}
we state our results. In Section \ref{sec:Proofs-of-parts 1,2} we
show that the projections $P_{z}\nu$ all belong to the appropriate
function space, and establish the continuity of the map $z\rightarrow P_{z}\nu$.
In Section \ref{sec:Proof-of-dc} we prove the result regarding the
slices of measures. In Section \ref{sec:Proof-of-Cor for sets} we
establish our results for self-similar sets. Finally, in Section \ref{sec:Proof-of equiv measures}
we prove the statement regarding the measure class of the projections.$\newline$

\textbf{Acknowledgment.} I would like to thank P. Shmerkin for suggesting
to consider a general $q>1$ in Theorem \ref{thm:main_thm} instead
of just $q=2$. I would also like to thank M. Hochman for helpful
discussions.

\section{\label{sec:Preliminaries}Preliminaries}

\subsection{Self-similar sets and measures}

Write $\mathbb{D}=\{z\in\mathbb{C}\::\:|z|<1\}$ and $S=\{z\in\mathbb{C}\::\:|z|=1\}$.
Given a metric space $X$, which will always be $\mathbb{C}$ or $\mathbb{R}$,
denote by $\mathcal{M}(X)$ the collection of all compactly supported
Borel probability measures on $X$. For a finite index set $\Lambda$
write
\[
\mathbb{P}_{\Lambda}=\{(p_{i})_{i\in\Lambda}\in(0,1)^{\Lambda}\::\:\sum_{i\in\Lambda}p_{i}=1\}\:.
\]
Given $p\in\mathbb{P}_{\Lambda}$, $(a_{i})_{i\in\Lambda}=\mathbf{a}\in\mathbb{C}^{\Lambda}$,
and $\lambda\in\mathbb{D}$, let $\nu_{\lambda,\mathbf{a}}^{p}\in\mathcal{M}(\mathbb{C})$
be the self-similar measure corresponding to the IFS
\begin{equation}
\mathcal{F}_{\lambda,\mathbf{a}}=\{f_{\lambda,\mathbf{a}}^{i}(z)=\lambda z+a_{i}\}_{i\in\Lambda}\label{eq:def of IFS}
\end{equation}
and $p$, i.e. $\nu_{\lambda,\mathbf{a}}^{p}$ is the unique member
of $\mathcal{M}(\mathbb{C})$ such that
\[
\nu_{\lambda,\mathbf{a}}^{p}=\sum_{i\in\Lambda}p_{i}\cdot f_{\lambda,\mathbf{a}}^{i}\nu_{\lambda,\mathbf{a}}^{p}\:.
\]
Denote by $K_{\lambda,\mathbf{a}}\subset\mathbb{C}$ the attractor
of $\mathcal{F}_{\lambda,\mathbf{a}}$, i.e. $K_{\lambda,\mathbf{a}}$
is the unique nonempty compact subset of $\mathbb{C}$ with
\begin{equation}
K_{\lambda,\mathbf{a}}=\cup_{i\in\Lambda}f_{\lambda,\mathbf{a}}^{i}(K_{\lambda,\mathbf{a}})\:.\label{eq:def_of_attractor}
\end{equation}
Since $p>0$ it holds that $K_{\lambda,\mathbf{a}}=supp(\nu_{\lambda,\mathbf{a}}^{p})$.
We say that the strong separation condition (SSC) holds if the union
in (\ref{eq:def_of_attractor}) is disjoint.

\subsection{Projections and disintegrations}

For $z,w\in\mathbb{C}$ write $\left\langle z,w\right\rangle =\mathrm{Re}(z\cdot\overline{w})$,
then $\left\langle \cdot,\cdot\right\rangle $ is the standard inner
product on $\mathbb{C}$ when it is identified with $\mathbb{R}^{2}$.
For $z\in S$ let $P_{z}:\mathbb{C}\rightarrow\mathbb{R}$ be with
$P_{z}w=\left\langle z,w\right\rangle $ for $w\in\mathbb{C}$. Note
that given $\mu\in\mathcal{M}(\mathbb{C})$ the measure $P_{z}\mu$
is, up to affine equivalence, the pushforward of $\mu$ by the orthogonal
projection onto the line $z\cdot\mathbb{R}$.

Denote by $\mathcal{B}$ the Borel $\sigma$-algebra of $\mathbb{R}$
or $\mathbb{C}$. For $\mu\in\mathcal{M}(\mathbb{C})$ and $z\in S$
let $\{\mu_{z,w}\}_{w\in\mathbb{C}}\subset\mathcal{M}(\mathbb{C})$
be the disintegration of $\mu$ with respect to $P_{z}^{-1}(\mathcal{B})$,
as defined in Theorem 5.14 in \cite{EW}. This means that $\mu_{z,w}$
is supported on $P_{z}^{-1}(P_{z}w)$ for $\mu$-a.e. $w\in\mathbb{C}$,
and that for each bounded \emph{$\mathcal{B}$-}measurable $f:\mathbb{C}\rightarrow\mathbb{R}$
\[
\int f\:d\mu_{z,w}=E_{\mu}(f\mid P_{z}^{-1}(\mathcal{B}))(w)\mbox{ for \ensuremath{\mu}-a.e. \ensuremath{w\in\mathbb{C}}\:.}
\]
In the last equality, the right hand side is the conditional expectation
of $f$ given $P_{z}^{-1}(\mathcal{B})$ with respect to $\mu$.

\subsection{\label{subsec:Dimension-of-measures}Dimension of measures}

Let $X$ be $\mathbb{C}$ or $\mathbb{R}$. For $E\subset X$ denote
by $\dim_{H}E$ the Hausdorff dimension of $E$. Given $\mu\in\mathcal{M}(X)$
write $\dim_{H}\mu$ for the Hausdorff dimension of $\mu$, which
is defined by
\[
\dim_{H}\mu=\inf\{\dim_{H}E\::\:E\subset X\text{ is a Borel set with }\mu(E)>0\}\:.
\]
For $x\in X$ and $\delta>0$ let $B(x,\delta)$ be the closed ball
in $X$ with centre $x$ and radius $\delta$. It is said that $\mu$
has exact dimension $s\ge0$ if
\[
\underset{\delta\downarrow0}{\lim}\:\frac{\log\mu(B(x,\delta))}{\log\delta}=s\:\text{ for \ensuremath{\mu}-a.e. \ensuremath{x\in X}}\:.
\]
It is well known that in this case $s=\dim_{H}\mu$ (see \cite[Chapter 10]{Fal2}).

For $1<q<\infty$ denote by $D_{q}(\mu)$ the lower $L^{q}$ dimension
of $\mu$, which is defined by
\[
D_{q}(\mu)=\underset{\delta\downarrow0}{\liminf}\:\frac{\log\int\left(\mu\left(B\left(x,\delta\right)\right)\right)^{q-1}\:d\mu(x)}{(q-1)\log\delta}\:.
\]
It always holds that $\dim_{H}\mu\ge D_{q}(\mu)$ and $D_{q'}(\mu)\ge D_{q}(\mu)$
for all $1<q'<q$. A proof of these standard facts can be found in
\cite{FLR}. The number $D_{2}(\mu)$ is called the lower correlation
dimension of $\mu$.

For $p\in\mathbb{P}_{\Lambda}$ write $\Vert p\Vert_{q}^{q}=\sum_{i\in\Lambda}p_{i}^{q}$.
It is not hard to verify that for $p\in\mathbb{P}_{\Lambda}$, $\mathbf{a}\in\mathbb{C}^{\Lambda}$,
and $\lambda\in\mathbb{D}$, such that the SSC is satisfied,
\begin{equation}
D_{q}(\nu_{\lambda,\mathbf{a}}^{p})=\frac{\log\Vert p\Vert_{q}^{q}}{(q-1)\log|\lambda|},\label{eq:L^q dim of ssm}
\end{equation}
\begin{equation}
\dim_{H}\nu_{\lambda,\mathbf{a}}^{p}=\frac{\sum_{i\in\Lambda}p_{i}\log p_{i}}{\log|\lambda|},\label{eq:Haus dim of ssm}
\end{equation}
and
\begin{equation}
\dim_{H}K_{\lambda,\mathbf{a}}=\frac{\log|\Lambda|}{-\log|\lambda|}\:.\label{eq:Haus dim of sss}
\end{equation}

\subsection{Function spaces}

Let $\mathcal{L}$ be the Lebesgue measure on $\mathbb{R}$. For $1\le p\le\infty$
denote by $\Vert\cdot\Vert_{p}$ the $L^{p}$-norm on $L^{p}(\mathbb{R})$.
Given $g\in L^{2}(\mathbb{R})$ write $\widehat{g}$ for the Fourier
transform of $g$, which is again a member of $L^{2}(\mathbb{R})$. 

For $\gamma\in[0,\infty)$ denote by $H_{\gamma}(\mathbb{R})$ the
$2,\gamma$-Sobolev space of $\mathbb{R}$, i.e.
\[
H_{\gamma}(\mathbb{R})=\{g\in L^{2}(\mathbb{R})\::\:\int\:|\widehat{g}(\xi)|^{2}(1+|\xi|^{2})^{\gamma}d\xi<\infty\}\:.
\]
For $g\in H_{\gamma}(\mathbb{R})$ write,
\[
\Vert g\Vert_{(\gamma)}=\left(\int\:|\widehat{g}(\xi)|^{2}(1+|\xi|^{2})^{\gamma}d\xi\right)^{1/2}\:.
\]
Note that $\left(H_{\gamma}(\mathbb{R}),\Vert\cdot\Vert_{(\gamma)}\right)$
is a Hilbert space, $H_{\gamma}(\mathbb{R})\subset H_{\gamma'}(\mathbb{R})$
and $\Vert g\Vert_{(\gamma')}\le\Vert g\Vert_{(\gamma)}$ for each
$0\le\gamma'\le\gamma$, and $H_{0}(\mathbb{R})=L^{2}(\mathbb{R})$.

Given $\mu\in\mathcal{M}(\mathbb{R})$ and a Banach space $B$ of
functions on $\mathbb{R}$, we write $\mu\in B$ when $\mu$ is absolutely
continuous and its density belongs to $B$.

\subsection{\label{subsec:The-exceptional-set}The exceptional set of parameters}

All of our results will be valid for self-similar measures $\nu_{\lambda,\mathbf{a}}^{p}$
and sets $K_{\lambda,\mathbf{a}}$ for which $\lambda\in\mathbb{D}$
lies outside a $0$-dimensional set $\mathcal{E}$. We shall now define
this exceptional set of parameters.

For $X$ which is $\mathbb{R}$ or $\mathbb{C}$ and $\mu\in\mathcal{M}(X)$
write $\widehat{\mu}$ for the Fourier transform of $\mu$, i.e. if
$X=\mathbb{R}$
\[
\widehat{\mu}(\xi)=\int e^{i\xi x}\:d\mu(x)\text{ for }\xi\in\mathbb{R},
\]
and if $X=\mathbb{C}$
\[
\widehat{\mu}(\xi)=\int e^{i\left\langle \xi,w\right\rangle }\:d\mu(w)\text{ for }\xi\in\mathbb{C}\:.
\]

By Theorem D in \cite{SS2} there exists a set $\mathcal{E}'\subset\mathbb{D}$,
with $\dim_{H}\mathcal{E}'=0$, such that the following holds. Let
$\lambda\in\mathbb{D}\setminus(\mathcal{E}'\cup\mathbb{R})$, $\Lambda$
a finite nonempty set, $(a_{i})_{i\in\Lambda}=\mathbf{a}\in\mathbb{C}^{\Lambda}$
with not all of the $a_{i}$ equal, and $p\in\mathbb{P}_{\Lambda}$.
Then there are $C,\gamma>0$ such that
\[
|\widehat{\nu_{\lambda,\mathbf{a}}^{p}}(\xi)|\le C|\xi|^{-\gamma}\text{ for all }\xi\in\mathbb{C}\:.
\]
Write
\[
\mathcal{E}=\cup_{k\ge1}\{\lambda\in\mathbb{D}\::\:\lambda^{k}\in\mathcal{E}'\},
\]
then $\dim_{H}\mathcal{E}=0$.

\section{\label{sec:Statement-of-results}Statement of results}

The following theorem is our main result. Parts (\ref{enu:L^q dens in each dir})
and (\ref{enu:weak contin}) are proven in Section \ref{sec:Proofs-of-parts 1,2},
part (\ref{enu:dc for measues}) is proven in Section \ref{sec:Proof-of-dc},
and part (\ref{enu:equiv of measures}) is proven in Section \ref{sec:Proof-of equiv measures}.
\begin{thm}
\label{thm:main_thm}Let $\lambda\in\mathbb{D}\setminus\mathcal{E}$
be with $\arg\lambda\notin\pi\mathbb{Q}$, $\Lambda$ a finite nonempty
set, $\mathbf{a}\in\mathbb{C}^{\Lambda}$ such that $\mathcal{F}_{\lambda,\mathbf{a}}$
satisfies the SSC, and $p\in\mathbb{P}_{\Lambda}$ with $D_{q}(\nu_{\lambda,\mathbf{a}}^{p})>1$
for some $1<q<\infty$. Set $\nu=\nu_{\lambda,\mathbf{a}}^{p}$, then
\begin{enumerate}
\item \label{enu:L^q dens in each dir}$P_{z}\nu\in L^{q}(\mathbb{R})$
for all $z\in S$;
\item \label{enu:weak contin}$z\rightarrow P_{z}\nu$ is continuous as
a map from $S$ to $L^{q}(\mathbb{R})$ with respect to the weak topology
of $L^{q}(\mathbb{R})$;
\item \label{enu:dc for measues}for every $z\in S$ the measure $\nu_{z,w}$
has exact dimension $\dim_{H}\nu-1$ for $\nu$-a.e. $w\in\mathbb{C}$;
\item \label{enu:equiv of measures}for every $z\in S$ the measures $P_{z}\nu$
and $\mathcal{L}|_{P_{z}(K_{\lambda,\mathbf{a}})}$ are equivalent.
\end{enumerate}
\end{thm}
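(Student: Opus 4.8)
The plan is to prove the four parts in the order $(1) \to (2) \to (3) \to (4)$, with part (1) being essentially a black-box deduction from the cited work and parts (2)--(4) being the substantive contributions.

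\textbf{Part (1).} Fix $q$ with $D_q(\nu) > 1$. First I would record the Fourier-analytic input: since $\arg\lambda\notin\pi\mathbb{Q}$ and $\lambda\notin\mathcal{E}$, for every $k\ge 1$ the measure $\nu_{\lambda^k,\mathbf{b}}^{p'}$ has polynomially decaying Fourier transform for suitable $\mathbf{b},p'$; this is the point of building $\mathcal{E}$ from $\mathcal{E}'$ by taking $k$-th roots, and it lets one apply the machinery of Shmerkin--Solomyak and Shmerkin to the IFS obtained by iterating $\mathcal{F}_{\lambda,\mathbf{a}}$ and then regrouping by the rotational part. The key structural observation is that $P_z\nu$, for a fixed $z$, is a self-similar measure on $\mathbb{R}$ generated by the contraction ratio $\lambda$ acting by the linear map $w\mapsto\mathrm{Re}(z\cdot\overline{\lambda w})$, which because $\arg\lambda$ is irrational mixes all directions; by the $L^q$-flattening / inverse-theorem results of Shmerkin (\cite{Sh1,Sh2}) combined with the transversality and dimension-of-exceptions results of Shmerkin--Solomyak (\cite{SS1,SS2}), $D_q(\nu)>1$ forces $P_z\nu\in L^q(\mathbb{R})$ for \emph{every} $z\in S$ (not merely a.e.\ $z$). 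I would simply cite this; it is stated in the introduction as following "almost directly" from those papers, and no new idea is needed.

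\textbf{Parts (2) and (3).} The engine for both is a \emph{uniform} $L^q$ bound $\sup_{z\in S}\|P_z\nu\|_q =: M < \infty$. I would derive this from part (1) by a soft functional-analytic argument: the map $z\mapsto P_z\nu$ is weak-$*$ continuous from $S$ into $\mathcal{M}(\mathbb{R})$, hence $z\mapsto\|P_z\nu\|_q$ is lower semicontinuous (the $L^q$-norm of an absolutely continuous measure is $\sup$ over test functions $\int g\,d\mu$ with $\|g\|_{q'}\le 1$, a sup of weak-$*$ continuous functions), so by compactness of $S$ it suffices to rule out blow-up, which one gets from reflexivity of $L^q$: a weak-$*$ convergent sequence $P_{z_n}\nu$ with bounded $L^q$-norm has a weakly convergent subsequence whose limit must be $P_z\nu$, and then $\|P_z\nu\|_q\le\liminf\|P_{z_n}\nu\|_q$; combined with lower semicontinuity and a covering/compactness argument this pins down finiteness of the sup. (This is exactly the role the paper flags for reflexivity.) Once $M<\infty$ is in hand, part (2) follows: for a sequence $z_n\to z$, the densities $P_{z_n}\nu$ are bounded in the reflexive space $L^q$, so along any subsequence there is a weak limit $g$; identifying $g=P_z\nu$ via testing against continuous compactly supported functions (using weak-$*$ convergence of the measures) shows every subsequence has a further subsequence converging weakly to the \emph{same} limit $P_z\nu$, hence the whole sequence converges weakly. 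For part (3), I would run the standard Marstrand-type slicing/ disintegration computation: write $\nu$ as an integral of conditional measures over fibres of $P_z$; the conditional measures are exact dimensional of dimension $\dim_H\nu - \dim_H P_z\nu$ wherever a Rogers--Taylor / entropy argument applies, and $\dim_H P_z\nu = 1$ because $P_z\nu$ has an $L^q$ density (so it is absolutely continuous), giving $\dim_H\nu - 1$. The subtlety is making the slice dimension formula hold \emph{pointwise} in $w$ (exact dimension, not just a.e.\ lower bound), and for \emph{every} $z$; here I would use the uniform $L^q$ bound to get a quantitative upper bound $\nu(B(w,\delta)\cap P_z^{-1}(I))\lesssim \delta^{?}$ via Hölder against the density, feeding into the exact-dimensionality of $\nu$ itself, and invoke the Hochman--Shmerkin exact-dimensionality of $P_z\nu$ (\cite{HS}).

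\textbf{Part (4).} Fix $z$. One inclusion is trivial: $P_z\nu \ll \mathcal{L}$ by part (1), and $\mathrm{supp}(P_z\nu) = P_z(K_{\lambda,\mathbf{a}})$, so $P_z\nu \ll \mathcal{L}|_{P_z(K)}$. For the reverse, $\mathcal{L}|_{P_z(K)} \ll P_z\nu$, I would extend the argument of \cite{MS,PSS}: suppose the density $f_z$ of $P_z\nu$ vanishes on a subset $A\subset P_z(K)$ of positive length; using the self-similarity $P_z\nu = \sum_i p_i\, (P_z f^i_{\lambda,\mathbf{a}})_* (P_z\nu)$ together with the uniform $L^q$ bound from part (2)'s preliminaries, one shows the zero set is forced to be invariant under the induced IFS on the line in a way incompatible with positive length unless it is empty — the uniform bound is what prevents the density from degenerating as one zooms in along the (densely rotating) orbit. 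I would structure this as: the density is bounded below on a set of full $P_z(K)$-measure by a compactness/zooming argument exploiting that $\{P_z\nu : z\in S\}$ is a \emph{uniformly} $L^q$-bounded, weak-*-continuous family invariant under the rotational dynamics.

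\textbf{Main obstacle.} The genuinely hard step is obtaining the uniform bound $\sup_{z\in S}\|P_z\nu\|_q<\infty$ and, relatedly, upgrading part (3) from "a.e.\ $z$" (classical) to "every $z$" with \emph{exact} (two-sided) dimension of the slices; everything else is either cited (part 1, and the Hochman--Shmerkin and Falconer--Jin inputs) or a soft consequence of reflexivity and weak compactness. The delicate point in the uniform bound is that weak-$*$ continuity of $z\mapsto P_z\nu$ does not by itself control $L^q$-norms (norm is only lower semicontinuous), so one must combine it with the a priori knowledge that \emph{each} norm is finite and exploit that $L^q$ is reflexive to promote finiteness to boundedness over the compact parameter space $S$.
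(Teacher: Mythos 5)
Your overall architecture matches the paper's (part (1) from the cited results via a convolution decomposition, a uniform $L^{q}$ bound as the engine for parts (2)--(4), reflexivity and subsequence extraction for weak continuity, a density-point argument \`a la \cite{MS,PSS} for part (4)), but the step you yourself identify as the crux --- the uniform bound $\sup_{z\in S}\Vert P_{z}\nu\Vert_{q}<\infty$ --- has a genuine gap as you describe it. Lower semicontinuity of $z\mapsto\Vert P_{z}\nu\Vert_{q}$ together with compactness of $S$ does \emph{not} imply boundedness: a lower semicontinuous function on a compact set attains its infimum but can be unbounded above, and reflexivity does not repair this, since a sequence $g_{z_{n}}$ with $\Vert g_{z_{n}}\Vert_{q}\to\infty$ simply has no weakly convergent subsequence, so no contradiction with $g_{z}\in L^{q}$ arises. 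What lower semicontinuity actually gives is that the sublevel sets $F_{n}=\{z:\Vert P_{z}\nu\Vert_{q}\le n\}$ are closed; the paper then applies Baire's theorem to $S=\cup_{n}F_{n}$ to find an open arc $V\subset F_{n}$, and --- this is the ingredient missing from your proposal --- uses the self-similarity identity $g_{z}(x)=\sum_{\boldsymbol{i}\in\Lambda^{k}}p_{\boldsymbol{i}}r^{-k}g_{\alpha^{-k}z}(\tau_{r^{-k}b_{\boldsymbol{i}}}M_{r^{-k}}x)$ together with the density of the orbit $\{\alpha^{-k}z\}_{k}$ (here $\arg\lambda\notin\pi\mathbb{Q}$ is used) to transport the bound from $V$ to every $z\in S$ with a uniformly controlled loss, since some $\alpha^{-k}z$ with $k\le N$ lands in $V$. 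Without this dynamical propagation step the uniform bound is not established, and everything downstream of it in your proposal is unsupported.

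A related but secondary gap is in part (3): ``run the standard Marstrand-type slicing computation'' and ``invoke Hochman--Shmerkin'' will not produce \emph{exact} dimension of $\nu_{z,w}$ for \emph{every} $z$; the classical disintegration results only give the conclusion for a.e.\ $z$. The paper's mechanism is again dynamical: an explicit formula $\nu_{z,w}(K_{\boldsymbol{i}_{k}(w)})=\frac{g_{\alpha^{-k}z}(P_{\alpha^{-k}z}T^{k}w)}{g_{z}(P_{z}w)}\,p_{\boldsymbol{i}_{k}(w)}r^{-k}$ for the slice measure of cylinder sets, combined with Chebyshev, the measure-preservation of the shift $T$, Borel--Cantelli (to show $g_{\alpha^{-k}z}(P_{\alpha^{-k}z}T^{k}w)$ grows and decays at most polynomially in $k$, using the \emph{uniform} $L^{q}$ bound over all directions $\alpha^{-k}z$), and Shannon--McMillan--Breiman. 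Your sketch names the right input (the uniform bound) but not the identity through which it acts, so as written it does not yield the two-sided, every-$z$ statement. Part (1) and the subsequence argument for part (2) are fine as sketches, and your part (4) outline is in the right spirit, though it leaves implicit the quantitative step ($\beta<1$ via H\"older against the uniform $L^{q}$ bound and a uniform lower bound $\mathcal{L}(P_{z}(K))>c$, the latter itself requiring parts (1) and (3) applied to the natural measure).
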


By differentiation it follows that for every probability vector $p\in\mathbb{P}_{\Lambda}$,
\[
\underset{q\downarrow1}{\lim}\:\frac{\log\Vert p\Vert_{q}^{q}}{(q-1)}=\sum_{i\in\Lambda}p_{i}\log p_{i}\:.
\]
Hence by (\ref{eq:L^q dim of ssm}) and (\ref{eq:Haus dim of ssm})
we get the following direct corollary of Theorem \ref{thm:main_thm}.
\begin{cor}
Let $\lambda\in\mathbb{D}\setminus\mathcal{E}$ be with $\arg\lambda\notin\pi\mathbb{Q}$,
$\Lambda$ a finite nonempty set, $\mathbf{a}\in\mathbb{C}^{\Lambda}$
such that $\mathcal{F}_{\lambda,\mathbf{a}}$ satisfies the SSC, and
$p\in\mathbb{P}_{\Lambda}$ with $\dim_{H}\nu_{\lambda,\mathbf{a}}^{p}>1$.
Then there exists $1<q<\infty$ such that parts (\ref{enu:L^q dens in each dir})
to (\ref{enu:equiv of measures}) in Theorem \ref{thm:main_thm} are
valid for $\nu=\nu_{\lambda,\mathbf{a}}^{p}$.
\end{cor}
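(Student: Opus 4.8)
The plan is to deduce the corollary directly from Theorem~\ref{thm:main_thm} by producing, for a given self-similar measure $\nu=\nu_{\lambda,\mathbf{a}}^{p}$ with $\dim_{H}\nu>1$, some exponent $1<q<\infty$ satisfying the hypothesis $D_{q}(\nu)>1$ of the theorem. The key observation is that $D_{q}(\nu)$ is monotone and continuous in $q$, tending to $\dim_{H}\nu$ as $q\downarrow1$, so for $q$ close enough to $1$ the value $D_{q}(\nu)$ is still larger than $1$. The corollary then follows verbatim from the conclusions of Theorem~\ref{thm:main_thm} applied with this $q$.

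First I would write $c_i=-\log p_i>0$ for $i\in\Lambda$ and compute, using (\ref{eq:L^q dim of ssm}), that
\[
D_{q}(\nu_{\lambda,\mathbf{a}}^{p})=\frac{\log\Vert p\Vert_{q}^{q}}{(q-1)\log|\lambda|}=\frac{\log\left(\sum_{i\in\Lambda}p_{i}^{q}\right)}{(q-1)\log|\lambda|}\:.
\]
Next I would justify the limit identity stated just before the corollary: the map $q\mapsto\log\left(\sum_{i\in\Lambda}p_{i}^{q}\right)$ is differentiable on $(0,\infty)$, vanishes at $q=1$ since $\sum_i p_i=1$, and its derivative at $q=1$ equals $\sum_{i\in\Lambda}p_{i}\log p_{i}$ (differentiate under the finite sum: $\frac{d}{dq}\sum_i p_i^q=\sum_i p_i^q\log p_i$, and at $q=1$ this is $\sum_i p_i\log p_i$, while the denominator $\sum_i p_i^q\to1$). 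Hence by l'Hôpital, or directly from the definition of the derivative,
\[
\underset{q\downarrow1}{\lim}\:\frac{\log\Vert p\Vert_{q}^{q}}{q-1}=\sum_{i\in\Lambda}p_{i}\log p_{i}\:.
\]
Dividing by $\log|\lambda|<0$ and invoking (\ref{eq:Haus dim of ssm}) gives $\lim_{q\downarrow1}D_{q}(\nu_{\lambda,\mathbf{a}}^{p})=\dim_{H}\nu_{\lambda,\mathbf{a}}^{p}$.

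Now, since $\dim_{H}\nu_{\lambda,\mathbf{a}}^{p}>1$ by assumption, the limit above shows there exists $1<q<\infty$ with $D_{q}(\nu_{\lambda,\mathbf{a}}^{p})>1$. (One could alternatively note that $q\mapsto D_{q}$ is nonincreasing, as recalled in Section~\ref{subsec:Dimension-of-measures}, so the supremum over $q>1$ equals $\lim_{q\downarrow1}D_q=\dim_H\nu>1$, and hence $D_q>1$ for $q$ sufficiently close to $1$.) For this $q$ all hypotheses of Theorem~\ref{thm:main_thm} are met: $\lambda\in\mathbb{D}\setminus\mathcal{E}$ with $\arg\lambda\notin\pi\mathbb{Q}$, $\mathcal{F}_{\lambda,\mathbf{a}}$ satisfies the SSC, and $D_{q}(\nu_{\lambda,\mathbf{a}}^{p})>1$. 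Applying that theorem yields parts (\ref{enu:L^q dens in each dir}) through (\ref{enu:equiv of measures}) for $\nu=\nu_{\lambda,\mathbf{a}}^{p}$, which is exactly the assertion of the corollary.

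There is essentially no obstacle here: the corollary is a soft consequence of the theorem, and the only content is the elementary limit computation for $D_q$ as $q\downarrow1$, which is the differentiation remark already indicated in the text. The mild point to be careful about is the sign of $\log|\lambda|$ and ensuring the one-sided limit (from $q>1$) is the relevant one, but this is routine.
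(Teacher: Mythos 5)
Your proposal is correct and matches the paper's own argument exactly: the paper derives the corollary from the remark that $\lim_{q\downarrow1}\frac{\log\Vert p\Vert_{q}^{q}}{q-1}=\sum_{i}p_{i}\log p_{i}$, so that by (\ref{eq:L^q dim of ssm}) and (\ref{eq:Haus dim of ssm}) one has $D_{q}(\nu)\to\dim_{H}\nu>1$ as $q\downarrow1$, and then applies Theorem \ref{thm:main_thm}. Your write-up merely fills in the routine differentiation detail that the paper leaves implicit.
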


\begin{rem}
The following observation might be of interest. Let $\mathcal{F}_{\lambda,\mathbf{a}}$
be an IFS as in Theorem \ref{thm:main_thm} and $p_{1},p_{2}\in\mathbb{P}_{\Lambda}$
with $p_{1}\ne p_{2}$. Set $\nu_{1}=\nu_{\lambda,\mathbf{a}}^{p_{1}}$
and $\nu_{2}=\nu_{\lambda,\mathbf{a}}^{p_{2}}$, and assume
\[
\dim_{H}\nu_{1},\dim_{H}\nu_{2}>1\:.
\]
It holds that $\nu_{1}$ and $\nu_{2}$ are singular. This follows
from the fact that they are both ergodic with respect to an appropriate
map from $K_{\lambda,\mathbf{a}}$ onto itself (see Section \ref{sec:Proof-of-dc})
and since $\nu_{1}\ne\nu_{2}$. On the other hand, by part (\ref{enu:equiv of measures})
of Theorem \ref{thm:main_thm}, $P_{z}\nu_{1}$ is equivalent to $P_{z}\nu_{2}$
for each $z\in S$.
\end{rem}

From Theorem \ref{thm:main_thm} we obtain the following result for
self-similar sets. Its proof is given in Section \ref{sec:Proof-of-Cor for sets}.
For $s\ge0$ denote by $\mathcal{H}^{s}$ and $\mathcal{P}^{s}$ the
$s$-dimensional Hausdorff and packing measures respectively. It is
well known that $0<\mathcal{H}^{s}(K)<\infty$ whenever $K$ is a
self-similar set with the SSC and dimension $s$. Given $z\in S$
and $x\in\mathbb{R}$ we write $K_{z,x}=K\cap P_{z}^{-1}\{x\}$.
\begin{cor}
\label{cor:cor from main thm}Let $\lambda\in\mathbb{D}\setminus\mathcal{E}$
be with $\arg\lambda\notin\pi\mathbb{Q}$, $\Lambda$ a finite nonempty
set, and $\mathbf{a}\in\mathbb{C}^{\Lambda}$ such that $\mathcal{F}_{\lambda,\mathbf{a}}$
satisfies the SSC. Set $K=K_{\lambda,\mathbf{a}}$ and $s=\dim_{H}K$,
and assume $s>1$. Then,
\begin{enumerate}
\item \label{enu:H^s a.s. stat}for each $z\in S$
\[
\dim_{H}K_{z,P_{z}w}=s-1\text{ and }\mathcal{P}^{s-1}(K_{z,P_{z}w})>0\text{ for }\mathcal{H}^{s}\text{-a.e. }w\in K;
\]
\item \label{enu:uni bound proj}there exists $c>0$ such that for every
$z\in S$,
\[
\mathcal{L}\{x\in\mathbb{R}\::\:\dim_{H}K_{z,x}=s-1\text{ and }\mathcal{P}^{s-1}(K_{z,x})>0\}>c\:.
\]
\end{enumerate}
\end{cor}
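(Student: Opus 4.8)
The plan is to deduce Corollary \ref{cor:cor from main thm} from Theorem \ref{thm:main_thm} by choosing an appropriate self-similar measure on $K$ and transferring information about its slices to the geometry of the sets $K_{z,x}$. First I would fix the uniform probability vector $p=(1/|\Lambda|,\dots,1/|\Lambda|)\in\mathbb{P}_{\Lambda}$ and set $\nu=\nu_{\lambda,\mathbf{a}}^{p}$. By \eqref{eq:Haus dim of ssm} and \eqref{eq:Haus dim of sss} we have $\dim_{H}\nu=\dim_{H}K=s>1$, and in fact for this choice $\nu$ is (a constant multiple of) $\mathcal{H}^{s}|_{K}$, since $0<\mathcal{H}^{s}(K)<\infty$ and the uniform measure is the normalised Hausdorff measure on a self-similar set with the SSC; this identification is what lets me replace $\nu$-a.e. statements by $\mathcal{H}^{s}$-a.e. statements. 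Since $\dim_{H}\nu>1$, by the Corollary following Theorem \ref{thm:main_thm} there is $1<q<\infty$ for which parts \eqref{enu:L^q dens in each dir}--\eqref{enu:equiv of measures} hold for $\nu$.

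Next, for a fixed $z\in S$, part \eqref{enu:dc for measues} of Theorem \ref{thm:main_thm} gives that $\nu_{z,w}$ has exact dimension $s-1$ for $\nu$-a.e.\ $w$. Since $\nu_{z,w}$ is supported on $P_{z}^{-1}(P_{z}w)\cap K=K_{z,P_{z}w}$ (the support being inside $K$ because $\mathrm{supp}\,\nu=K$), exact dimensionality forces $\dim_{H}K_{z,P_{z}w}\ge s-1$; the reverse inequality $\dim_{H}K_{z,P_{z}w}\le s-1$ is the standard Marstrand-type slicing bound, valid for every $z$ since $\dim_{H}K=s$ and fibres of a Lipschitz map onto $\mathbb{R}$ generically drop dimension by at least $1$ — actually I would get the almost-everywhere upper bound from the fact that $\nu_{z,w}$ has \emph{upper} pointwise dimension $\le s-1$ a.e.\ as a consequence of exact dimensionality, which already pins $\dim_{H}$ of the slice. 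For the packing-measure lower bound $\mathcal{P}^{s-1}(K_{z,P_{z}w})>0$, I would use that a measure of exact dimension $s-1$ whose lower pointwise dimension equals $s-1$ cannot be supported on a set of $\sigma$-finite $\mathcal{P}^{s-1}$-measure unless that set has positive $\mathcal{P}^{s-1}$-measure; more precisely, if $\mathcal{P}^{s-1}(E)=0$ then any measure on $E$ has upper pointwise dimension $>s-1$ somewhere — invoking the mass distribution principle for packing measures (see \cite[Chapter 10]{Fal2} or \cite{FLR}-type estimates) applied to $\nu_{z,w}$ yields $\mathcal{P}^{s-1}(K_{z,P_{z}w})>0$ for $\nu$-a.e.\ $w$. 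This proves part \eqref{enu:H^s a.s. stat}.

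For part \eqref{enu:uni bound proj} I would push part \eqref{enu:H^s a.s. stat} forward under $P_{z}$. The set $G_{z}=\{w\in K:\dim_{H}K_{z,P_{z}w}=s-1,\ \mathcal{P}^{s-1}(K_{z,P_{z}w})>0\}$ has full $\nu$-measure, hence $P_{z}\nu(P_{z}G_{z})=1$. The set $A_{z}=\{x\in\mathbb{R}:\dim_{H}K_{z,x}=s-1,\ \mathcal{P}^{s-1}(K_{z,x})>0\}$ contains $P_{z}G_{z}$ (if $x=P_{z}w$ with $w\in G_{z}$ then $x\in A_{z}$), so $P_{z}\nu(A_{z})=1$. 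Now by part \eqref{enu:equiv of measures}, $P_{z}\nu$ is equivalent to $\mathcal{L}|_{P_{z}(K)}$, so $\mathcal{L}(P_{z}(K)\setminus A_{z})=0$, giving $\mathcal{L}(A_{z})=\mathcal{L}(P_{z}(K))$. The final point is that $\mathcal{L}(P_{z}(K))$ is bounded below uniformly in $z$: this is where I would use that $\{P_{z}\nu\}_{z\in S}$ is bounded in $L^{q}(\mathbb{R})$ (established in Section \ref{sec:Proofs-of-parts 1,2} as part of the proof of \eqref{enu:weak contin}). Indeed, if $g_{z}$ is the density of $P_{z}\nu$, then $1=\int g_{z}\,d\mathcal{L}=\int_{P_{z}(K)}g_{z}\,d\mathcal{L}\le \|g_{z}\|_{q}\,\mathcal{L}(P_{z}(K))^{1-1/q}$ by Hölder, so $\mathcal{L}(P_{z}(K))\ge \sup_{z}\|g_{z}\|_{q}^{-q/(q-1)}=:c>0$. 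Hence $\mathcal{L}(A_{z})\ge c$ for all $z\in S$, proving \eqref{enu:uni bound proj}.

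The main obstacle I expect is the packing-measure assertion $\mathcal{P}^{s-1}(K_{z,x})>0$: the exact-dimensionality of $\nu_{z,w}$ gives clean control of the \emph{lower} pointwise dimension, and one must leverage this into a positive packing measure bound for the support via the appropriate (packing-measure) mass distribution principle, being careful that this principle yields positivity of $\mathcal{P}^{s-1}$ of the support rather than merely $\mathcal{H}^{s-1}$. A secondary technical point is the identification $\nu\propto\mathcal{H}^{s}|_{K}$ for the uniform vector, which, although standard for self-similar sets with the SSC, should be cited precisely so that the $\nu$-a.e.\ statements in parts \eqref{enu:dc for measues} and \eqref{enu:equiv of measures} translate faithfully into the $\mathcal{H}^{s}$-a.e.\ statement in \eqref{enu:H^s a.s. stat}.
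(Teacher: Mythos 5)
Your overall architecture (uniform vector $p$, identification $\nu\propto\mathcal{H}^{s}|_{K}$, lower bound on the slice dimension from part (\ref{enu:dc for measues}) of Theorem \ref{thm:main_thm}, Marstrand slicing plus $P_{z}\nu\ll\mathcal{L}$ for the upper bound, and a H\"older estimate for the uniform length bound) matches the paper. But the central assertion $\mathcal{P}^{s-1}(K_{z,P_{z}w})>0$ is exactly where your argument has a genuine gap. The principle you invoke --- that a measure of exact dimension $s-1$ cannot be carried by a set of zero $\mathcal{P}^{s-1}$-measure --- is false. Exact dimensionality only gives $\log\nu_{z,w}(B(\eta,\delta))/\log\delta\to s-1$, which is compatible with $\liminf_{\delta}\nu_{z,w}(B(\eta,\delta))/\delta^{s-1}=+\infty$ everywhere (e.g.\ local behaviour like $\delta^{s-1}\log(1/\delta)$), in which case the packing-measure mass distribution principle (Mattila, Theorem 6.11) gives nothing: that principle requires a genuine \emph{finite} bound on $\liminf_{\delta}\nu_{z,w}(B(\eta,\delta))/\delta^{s-1}$ on a set of positive $\nu_{z,w}$-measure. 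The paper manufactures this finite bound from the exact formula of Lemma \ref{lem:slices by densities}, $\nu_{z,w}(K_{\boldsymbol{i}_{k}(\eta)})=\frac{g_{\alpha^{-k}z}(P_{\alpha^{-k}z}T^{k}\eta)}{g_{z}(P_{z}\eta)}\,|\Lambda|^{-k}r^{-k}$, together with a Chebyshev/measure-preservation argument showing that for each $n$ the set of $\eta$ with $g_{\alpha^{-k}z}(P_{\alpha^{-k}z}T^{k}\eta)\le n$ for infinitely many $k$ has $\nu$-measure at least $1-C^{2}/n$; this is where the uniform $L^{2}$-bound on the densities (Proposition \ref{prop:bounded norms}) is essential, and none of it is recoverable from exact dimensionality alone.

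Two smaller points. First, your fallback suggestion to get $\dim_{H}K_{z,P_{z}w}\le s-1$ from the upper pointwise dimension of $\nu_{z,w}$ does not work: the dimension of a measure bounds the dimension of its support only from \emph{below}; keep your first instinct (Falconer's slicing theorem gives $\dim_{H}K_{z,x}\le s-1$ for $\mathcal{L}$-a.e.\ $x$, and $P_{z}\nu\ll\mathcal{L}$ transfers this to $\nu$-a.e.\ $w$), which is what the paper does. Second, invoking part (\ref{enu:equiv of measures}) of Theorem \ref{thm:main_thm} to prove part (\ref{enu:uni bound proj}) is circular within the paper's logic: the proof of part (\ref{enu:equiv of measures}) in Section \ref{sec:Proof-of equiv measures} itself uses part (\ref{enu:uni bound proj}) of the corollary to get the uniform lower bound $\mathcal{L}(P_{z}(K))>c$. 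The detour is unnecessary anyway: apply H\"older directly to $1=P_{z}\nu(A_{z})=\int 1_{A_{z}}g_{z}\,d\mathcal{L}\le\Vert g_{z}\Vert_{q}\,\mathcal{L}(A_{z})^{1-1/q}$ to get $\mathcal{L}(A_{z})\ge C^{-q/(q-1)}$, needing only absolute continuity and the uniform norm bound.
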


\begin{rem}
In \cite[Theorem 2.4]{Rap1} the author has shown that, under the
assumptions of the last corollary, for Lebesgue almost every $z\in S$
\[
\mathcal{P}^{s-1}(K_{z,P_{z}w})>0\text{ for }\mathcal{H}^{s}\text{-a.e. }w\in K\:.
\]
In Corollary \ref{cor:cor from main thm} this is established for
all $z\in S$.
\end{rem}

\begin{rem}
If the last corollary would remain true with $\mathcal{H}^{s-1}$
instead of $\mathcal{P}^{s-1}$, then by \cite[Corollary 2.3]{Rap1}
it would follow that $P_{z}(\mathcal{H}^{s}|_{K_{\lambda,\mathbf{a}}})\in L^{\infty}(\mathbb{R})$
for all $z\in S$ and that
\[
\underset{z\in S}{\sup}\:\Vert P_{z}(\mathcal{H}^{s}|_{K_{\lambda,\mathbf{a}}})\Vert_{\infty}<\infty\:.
\]
Unfortunately, currently we are unable to prove this stronger statement.
\end{rem}

If in Theorem \ref{thm:main_thm} we assume that the correlation dimension
$D_{2}(\nu_{\lambda,\mathbf{a}}^{p})$ exceeds $1$ then we get the
following result. It guarantees the continuity of the map $z\rightarrow P_{z}\nu$
with respect to the norm topology, which is of course much stronger
than continuity with respect to the weak topology. The proof is given
in Section \ref{sec:Proofs-of-parts 1,2}.
\begin{thm}
\label{thm:continuous frac deriv}Let $\lambda\in\mathbb{D}\setminus\mathcal{E}$
be with $\arg\lambda\notin\pi\mathbb{Q}$, $\Lambda$ a finite nonempty
set, $\mathbf{a}\in\mathbb{C}^{\Lambda}$ such that $\mathcal{F}_{\lambda,\mathbf{a}}$
satisfies the SSC, and $p\in\mathbb{P}_{\Lambda}$ with $D_{2}(\nu_{\lambda,\mathbf{a}}^{p})>1$.
Set $\nu=\nu_{\lambda,\mathbf{a}}^{p}$, then there exists $\gamma>0$
such that
\begin{enumerate}
\item \label{enu:frac deriv in each dir}$P_{z}\nu\in H_{\gamma}(\mathbb{R})$
for all $z\in S$;
\item \label{enu:cont of proj}$z\rightarrow P_{z}\nu$ is continuous as
a map from $S$ to $\left(H_{\gamma}(\mathbb{R}),\Vert\cdot\Vert_{(\gamma)}\right)$.
\end{enumerate}
\end{thm}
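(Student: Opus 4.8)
\textbf{Proof proposal for Theorem \ref{thm:continuous frac deriv}.}

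The plan is to reduce everything to a single uniform Fourier-decay estimate for the planar measure $\nu$, and then deduce the Sobolev regularity of each projection $P_z\nu$ together with the continuity of $z\mapsto P_z\nu$ by a dominated-convergence argument in the Fourier domain. First I would observe that since $\lambda\in\mathbb{D}\setminus\mathcal{E}$ and $\arg\lambda\notin\pi\mathbb{Q}$, the measure $\nu=\nu_{\lambda,\mathbf{a}}^{p}$ satisfies the hypotheses used to define $\mathcal{E}$ (the $a_i$ are not all equal, since the SSC forces $|\Lambda|\ge 2$ with distinct fixed points): indeed $\lambda\in\mathbb{D}\setminus(\mathcal{E}'\cup\mathbb{R})$ because $\mathcal{E}\supseteq\mathcal{E}'$ and $\arg\lambda\notin\pi\mathbb{Q}$ rules out $\lambda\in\mathbb{R}$. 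Hence by Theorem D of \cite{SS2} there are $C,\gamma_0>0$ with $|\widehat{\nu}(\xi)|\le C|\xi|^{-\gamma_0}$ for all $\xi\in\mathbb{C}$. The key elementary fact I want to exploit is that for $z\in S$, the one-dimensional Fourier transform of the projection factors through the planar one: $\widehat{P_z\nu}(t)=\int e^{itP_z w}\,d\nu(w)=\int e^{i\langle tz,w\rangle}\,d\nu(w)=\widehat{\nu}(tz)$ for $t\in\mathbb{R}$. So $|\widehat{P_z\nu}(t)|\le C|t|^{-\gamma_0}$ uniformly in $z$.

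Next I would combine this polynomial Fourier decay with the $L^q$-dimension hypothesis $D_2(\nu)>1$. The correlation dimension controls the $L^2$ energy of projections: a standard computation (Plancherel together with the energy formula, as in the proof of part (\ref{enu:L^q dens in each dir}) of Theorem \ref{thm:main_thm}, which I may assume) shows that $D_2(\nu)>1$ forces $P_z\nu\in L^2(\mathbb{R})$ for every $z\in S$, with a uniform bound $\sup_{z\in S}\|P_z\nu\|_2<\infty$; equivalently, $\int_{\mathbb{R}}|\widehat{\nu}(tz)|^2\,dt\le M<\infty$ uniformly in $z$. Now pick any $0<\gamma<\gamma_0/2$. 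For $|t|\le 1$ use the trivial bound $|\widehat{P_z\nu}(t)|\le 1$, and split
\[
\int_{\mathbb{R}}|\widehat{P_z\nu}(t)|^2(1+t^2)^{\gamma}\,dt\le\int_{|t|\le 1}2^{\gamma}\,dt+\int_{|t|>1}|\widehat{\nu}(tz)|^{2}|t|^{2\gamma}\,dt.
\]
For the second integral interpolate the two decay bounds: write $|\widehat{\nu}(tz)|^2=|\widehat{\nu}(tz)|^{2\theta}|\widehat{\nu}(tz)|^{2(1-\theta)}$, bounding the first factor by $(C|t|^{-\gamma_0})^{2\theta}$ and keeping the second; choosing $\theta$ so that $\gamma_0\theta>\gamma+\tfrac12$ and then applying Cauchy--Schwarz against the $L^2$-energy bound $M$ makes the tail integral finite, with a constant depending only on $C,\gamma_0,\gamma,M$ — in particular \emph{independent of} $z$. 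This proves part (\ref{enu:frac deriv in each dir}) and also the uniform bound $\sup_{z\in S}\|P_z\nu\|_{(\gamma)}<\infty$.

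For part (\ref{enu:cont of proj}) I would fix $z_n\to z$ in $S$ and show $\|P_{z_n}\nu-P_z\nu\|_{(\gamma)}\to 0$. By Plancherel this is $\int_{\mathbb{R}}|\widehat{\nu}(tz_n)-\widehat{\nu}(tz)|^2(1+t^2)^{\gamma}\,dt$. Pointwise in $t$ the integrand tends to $0$ because $\widehat{\nu}$ is continuous on $\mathbb{C}$ (it is the Fourier transform of a compactly supported probability measure) and $tz_n\to tz$. To pass to the limit I need a uniform-in-$n$ integrable majorant: on $|t|\le R$ dominate by $4\cdot(1+R^2)^\gamma$, and on $|t|>R$ use $|\widehat{\nu}(tz_n)-\widehat{\nu}(tz)|^2(1+t^2)^\gamma\le 2(|\widehat{\nu}(tz_n)|^2+|\widehat{\nu}(tz)|^2)(1+t^2)^\gamma$, whose integral over $|t|>R$ is bounded, uniformly in $n$, by the tail estimate from the previous paragraph, hence is $<\epsilon$ for $R$ large. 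Letting $R\to\infty$ after $n\to\infty$ gives $\limsup_n\|P_{z_n}\nu-P_z\nu\|_{(\gamma)}^2\le 2\epsilon$ for every $\epsilon>0$, completing the proof. The one step that requires genuine care — rather than being routine — is the interpolation/tail estimate that turns the $z$-independent polynomial decay of $\widehat{\nu}$ and the $z$-independent $L^2$-energy bound into a \emph{single} finite constant controlling all the Sobolev norms $\|P_z\nu\|_{(\gamma)}$ at once; everything else is Plancherel plus dominated convergence. Note that Theorem \ref{thm:continuous frac deriv} implies parts (\ref{enu:L^q dens in each dir}) and (\ref{enu:weak contin}) of Theorem \ref{thm:main_thm} in the case $q=2$, since $H_\gamma(\mathbb{R})\hookrightarrow L^2(\mathbb{R})$ and norm convergence implies weak convergence.
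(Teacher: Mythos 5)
Your reduction to the two facts ``$|\widehat{\nu}(tz)|\le C|t|^{-\gamma_{0}}$ uniformly in $z$'' and ``$\sup_{z}\int|\widehat{\nu}(tz)|^{2}\,dt\le M$'' does not suffice, and the interpolation step is where the argument breaks. Your tail estimate requires choosing $\theta\in(0,1]$ with $\gamma_{0}\theta>\gamma+\tfrac12$ (equivalently, after H\"older, $\int_{|t|>1}|t|^{(2\gamma-2\gamma_{0}\theta)/\theta}\,dt<\infty$, i.e.\ $2\gamma<\theta(2\gamma_{0}-1)$). This is only possible when $\gamma_{0}>\tfrac12$, but Theorem D of \cite{SS2} supplies only \emph{some} $\gamma_{0}>0$, with no lower bound; for the measures in question $\gamma_{0}$ may well be tiny. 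Worse, the desired conclusion is genuinely false at the level of generality you are working at: a nonnegative function $h$ on $[1,\infty)$ supported on a sparse union of intervals $A=\bigcup_{k}[2^{k},2^{k}+a_{k}]$ with $h=t^{-\gamma_{0}}$ on $A$ and $a_{k}=2^{2\gamma_{0}k}/k^{2}$ satisfies $h(t)\le t^{-\gamma_{0}}$ and $\int h^{2}<\infty$, yet $\int h(t)^{2}t^{2\gamma}\,dt=\sum_{k}2^{2\gamma k}/k^{2}=\infty$ for every $\gamma>0$ (when $2\gamma_{0}<1$). So no argument using only those two inputs can yield part (\ref{enu:frac deriv in each dir}).

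The paper avoids this by keeping the convolution structure: it writes $\nu=\mu*\nu_{\lambda^{k},\mathbf{a}}^{p}$ with $D_{2}(\mu)>1$, invokes Theorem 8.2 of \cite{Sh2} to get $D_{2}(P_{z}\mu)=1$ for \emph{every} $z$, and then applies Lemma 2.1 of \cite{Sh1}, which converts $D_{2}(P_{z}\mu)=1$ together with the power decay of $\widehat{P_{z}\nu_{\lambda^{k},\mathbf{a}}^{p}}$ into $P_{z}\mu*P_{z}\nu_{\lambda^{k},\mathbf{a}}^{p}\in H_{\gamma_{0}/4}(\mathbb{R})$. That lemma works because $D_{2}(P_{z}\mu)=1$ gives dyadic-scale control $\int_{|\xi|\le R}|\widehat{P_{z}\mu}(\xi)|^{2}\,d\xi\lesssim_{\epsilon}R^{\epsilon}$ at every scale $R$, which is strictly more information than the single global bound $\int|\widehat{P_{z}\nu}|^{2}<\infty$ you retain. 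A secondary point: your justification of the uniform bound $\sup_{z}\Vert P_{z}\nu\Vert_{2}\le M$ via ``Plancherel and the energy formula'' is also off --- the energy formula controls only the \emph{average} over $z\in S$; the uniform bound needs the Baire-category argument of Proposition \ref{prop:bounded norms}. Your dominated-convergence proof of part (\ref{enu:cont of proj}) is a nice alternative to the paper's use of Rellich's theorem, but it presupposes a uniform $H_{\gamma_{1}}$-bound for some $\gamma_{1}>\gamma$, so it cannot be rescued until part (\ref{enu:frac deriv in each dir}) is proved by the convolution route (after which Proposition \ref{prop:bounded norms} supplies the uniform bound and your tail argument does go through).
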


\begin{rem}
Note that part (\ref{enu:cont of proj}) of the last theorem implies
that $z\rightarrow P_{z}\nu$ is continuous as a map from $S$ to
$\left(L^{2}(\mathbb{R}),\Vert\cdot\Vert_{2}\right)$. This in turn
implies the continuity of this map with respect to the total variation
norm on $\mathcal{M}(\mathbb{R})$.
\end{rem}

Let $\lambda\in\mathbb{D}$ and $\mathbf{a}\in\mathbb{C}^{\Lambda}$
be such that $\mathcal{F}_{\lambda,\mathbf{a}}$ satisfies the SSC,
and write $s=\dim_{H}K_{\lambda,\mathbf{a}}$ and $p'=(\frac{1}{|\Lambda|},...,\frac{1}{|\Lambda|})$.
It is not hard to see that there exists $0<c<\infty$ so that
\[
\mathcal{H}^{s}|_{K_{\lambda,\mathbf{a}}}=c\cdot\nu_{\lambda,\mathbf{a}}^{p'}\:.
\]
Also, for each $1<q<\infty$ it follows by (\ref{eq:L^q dim of ssm})
and (\ref{eq:Haus dim of sss}) that
\[
D_{q}(\nu_{\lambda,\mathbf{a}}^{p'})=\frac{\log|\Lambda|}{-\log|\lambda|}=\dim_{H}K_{\lambda,\mathbf{a}}\:.
\]
From these facts we get the following direct corollary of Theorems
\ref{thm:main_thm} and \ref{thm:continuous frac deriv}. It shows
that when $s>1$ the projections of $\mathcal{H}^{s}|_{K_{\lambda,\mathbf{a}}}$
are very regular.
\begin{cor}
Let $\lambda\in\mathbb{D}\setminus\mathcal{E}$ be with $\arg\lambda\notin\pi\mathbb{Q}$,
$\Lambda$ a finite nonempty set, and $\mathbf{a}\in\mathbb{C}^{\Lambda}$
such that $\mathcal{F}_{\lambda,\mathbf{a}}$ satisfies the SSC. Set
$s=\dim_{H}K_{\lambda,\mathbf{a}}$ and assume $s>1$. Then for $\nu=\mathcal{H}^{s}|_{K_{\lambda,\mathbf{a}}}$
and every $1<q<\infty$, parts (\ref{enu:L^q dens in each dir}) to
(\ref{enu:equiv of measures}) of Theorem \ref{thm:main_thm} are
valid. Additionally, parts (\ref{enu:frac deriv in each dir}) and
(\ref{enu:cont of proj}) of Theorem \ref{thm:continuous frac deriv}
are valid for some $\gamma>0$.
\end{cor}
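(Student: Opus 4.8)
The plan is to deduce the final corollary directly from Theorems \ref{thm:main_thm} and \ref{thm:continuous frac deriv} together with the two elementary facts recorded just before its statement. First I would fix $\lambda\in\mathbb{D}\setminus\mathcal{E}$ with $\arg\lambda\notin\pi\mathbb{Q}$, a finite index set $\Lambda$, and $\mathbf{a}\in\mathbb{C}^\Lambda$ so that $\mathcal{F}_{\lambda,\mathbf{a}}$ satisfies the SSC, and set $p'=(\tfrac1{|\Lambda|},\dots,\tfrac1{|\Lambda|})$ and $s=\dim_H K_{\lambda,\mathbf{a}}$. By the first displayed fact there is $0<c<\infty$ with $\mathcal{H}^s|_{K_{\lambda,\mathbf{a}}}=c\cdot\nu_{\lambda,\mathbf{a}}^{p'}$; since all the conclusions of Theorems \ref{thm:main_thm} and \ref{thm:continuous frac deriv} (membership in $L^q$ or $H_\gamma$, weak or norm continuity of $z\mapsto P_z(\cdot)$, the exact dimension of the fibre measures, and the equivalence of the projection with $\mathcal{L}|_{P_z(K)}$) are invariant under multiplication by a positive constant, it suffices to prove the corollary for the \emph{probability} measure $\nu=\nu_{\lambda,\mathbf{a}}^{p'}$.

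Next I would invoke the second displayed fact: by \eqref{eq:L^q dim of ssm} and \eqref{eq:Haus dim of sss}, for every $1<q<\infty$
\[
D_q(\nu_{\lambda,\mathbf{a}}^{p'})=\frac{\log|\Lambda|}{-\log|\lambda|}=\dim_H K_{\lambda,\mathbf{a}}=s>1 ,
\]
where the last inequality is the standing hypothesis. In particular $D_2(\nu_{\lambda,\mathbf{a}}^{p'})=s>1$ as well. Thus the hypotheses of Theorem \ref{thm:main_thm} are met with this value of $q$ for \emph{every} $q\in(1,\infty)$, and the hypotheses of Theorem \ref{thm:continuous frac deriv} are met. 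Applying Theorem \ref{thm:main_thm} gives parts (\ref{enu:L^q dens in each dir})--(\ref{enu:equiv of measures}) for $\nu_{\lambda,\mathbf{a}}^{p'}$ and each fixed $1<q<\infty$, and applying Theorem \ref{thm:continuous frac deriv} yields a $\gamma>0$ for which parts (\ref{enu:frac deriv in each dir}) and (\ref{enu:cont of proj}) hold. Transferring these back through the identity $\mathcal{H}^s|_{K_{\lambda,\mathbf{a}}}=c\cdot\nu_{\lambda,\mathbf{a}}^{p'}$ — noting that $P_z(c\cdot\mu)=c\cdot P_z\mu$, that $(c\mu)_{z,w}=\mu_{z,w}$, and that a density lies in $L^q$ or $H_\gamma$ iff its constant multiple does — completes the proof.

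Since everything is a bookkeeping consequence of results already established, there is essentially no obstacle; the only point requiring a word of care is the elementary observation that $\dim_H\nu$, $D_q(\nu)$, the fibre measures $\nu_{z,w}$, and the function-space membership and continuity statements are all unchanged when $\nu$ is replaced by a positive scalar multiple (or, for the probability-measure normalization used in the theorems, renormalized), and likewise that $\mathcal{L}|_{P_z(K)}$ does not depend on this normalization — all of which is immediate. The statement that $0<\mathcal{H}^s(K)<\infty$, and hence the existence of the constant $c$, is the standard fact for self-similar sets with the SSC already cited in the paragraph preceding Corollary \ref{cor:cor from main thm}.
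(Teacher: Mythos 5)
Your proposal is correct and follows exactly the route the paper intends: the paper itself presents this corollary as a direct consequence of the two displayed facts (the identity $\mathcal{H}^{s}|_{K_{\lambda,\mathbf{a}}}=c\cdot\nu_{\lambda,\mathbf{a}}^{p'}$ and the computation $D_{q}(\nu_{\lambda,\mathbf{a}}^{p'})=s$ for all $1<q<\infty$), combined with Theorems \ref{thm:main_thm} and \ref{thm:continuous frac deriv}. Your additional bookkeeping about the invariance of all the conclusions under positive scalar multiples is exactly the (implicit) content of the paper's ``direct corollary'' claim.
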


\section{\label{sec:Proofs-of-parts 1,2}Proof of parts (\ref{enu:L^q dens in each dir})
and (\ref{enu:weak contin}) of Theorems \ref{thm:main_thm} and \ref{thm:continuous frac deriv}}

The following two results will be needed. Let $X$ be $\mathbb{C}$
or $\mathbb{R}$ and for $\mu,\nu\in\mathcal{M}(X)$ write $\mu*\nu$
for their convolution.
\begin{thm}
[\cite{SS1}{, Theorem 4.4}]\label{thm:shmerkin and solomyak}Let
$1<q<\infty$, $\mu\in\mathcal{M}(\mathbb{R})$ with $D_{q}(\mu)=1$,
and $\nu\in\mathcal{M}(\mathbb{R})$ such that there exist $C,\gamma>0$
with $|\widehat{\nu}(\xi)|\le C|\xi|^{-\gamma}$ for all $\xi\in\mathbb{R}$.
Then $\mu*\nu\in L^{q}(\mathbb{R})$.
\end{thm}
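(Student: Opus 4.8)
The plan is to produce the density by proving that the dyadic conditional expectations of $\mu*\nu$ are bounded in $L^q$ and invoking reflexivity. For $n\ge0$ let $\mathcal{D}_n$ be the partition of $\mathbb{R}$ into dyadic intervals of length $2^{-n}$, let $E_n\rho=\sum_{I\in\mathcal{D}_n}\frac{\rho(I)}{|I|}\mathbf{1}_I$ be the associated martingale density of a measure $\rho$, and set $S_{q,n}(\rho)=\sum_{I\in\mathcal{D}_n}\rho(I)^q$, so that $\Vert E_n\rho\Vert_q^q=2^{(q-1)n}S_{q,n}(\rho)$. Since $1<q<\infty$, the space $L^q(\mathbb{R})$ is reflexive; hence if $\sup_n\Vert E_n(\mu*\nu)\Vert_q<\infty$ then $\{E_n(\mu*\nu)\}_n$ has a weakly convergent subsequence, whose limit is identified against continuous compactly supported test functions with the density of $\mu*\nu$, giving $\mu*\nu\in L^q(\mathbb{R})$. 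Thus everything reduces to the uniform bound $\sup_n2^{(q-1)n}S_{q,n}(\mu*\nu)<\infty$.

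First I would record the two hypotheses in usable form. Using the comparability of the ball- and dyadic-based sums, $D_q(\mu)=1$ gives, for every $\epsilon>0$ and all large $n$, that $S_{q,n}(\mu)\le2^{-(1-\epsilon)(q-1)n}$, i.e. $\Vert E_n\mu\Vert_q\le2^{\epsilon(q-1)n/q}$: the measure $\mu$ is only sub-exponentially far from having an $L^q$ density. The decay $|\widehat{\nu}(\xi)|\le C|\xi|^{-\gamma}$ encodes regularity of $\nu$ at every small scale, and the task is to convert $\mu$'s sub-exponential growth into a genuine uniform bound, for \emph{arbitrarily small} $\gamma>0$. It is worth noting that the two obvious estimates are useless: bounding $(\mu*\nu)(I)^q=(\int\mu(I-y)\,d\nu(y))^q\le\int\mu(I-y)^q\,d\nu(y)$ by Jensen, or $\Vert\mu*(\nu*\phi)\Vert_q\le\Vert\nu*\phi\Vert_q$ by Minkowski, each discards precisely the smoothing one must exploit and yields no gain.

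To isolate the mechanism I would first treat the model case $q=2$, where Plancherel makes everything transparent: $\Vert(\mu*\nu)*\phi_{2^{-n}}\Vert_2^2\asymp\int|\widehat{\mu}|^2|\widehat{\nu}|^2|\widehat{\phi}(2^{-n}\xi)|^2\,d\xi$, and splitting into dyadic frequency annuli $|\xi|\asymp2^j$ and using $\int_{|\xi|\asymp2^j}|\widehat{\mu}|^2\,d\xi\asymp2^jS_{2,j}(\mu)\le2^{\epsilon j}$ together with $|\widehat{\nu}|\le C2^{-\gamma j}$ gives a bound $\lesssim\sum_j2^{(\epsilon-2\gamma)j}$, finite once $\epsilon<2\gamma$ and uniform in $n$. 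The essential feature is that $\mu$'s frequency-$2^j$ content is paired with $\nu$'s decay at the \emph{same} frequency $2^j$, so that any positive $\gamma$ suffices.

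For general $q$ the plan is to reproduce this scale-by-scale pairing through a martingale (Littlewood--Paley) decomposition in place of Plancherel. Writing the telescoping sum $E_n(\mu*\nu)=E_0(\mu*\nu)+\sum_{m=1}^nD_m(\mu*\nu)$ with $D_m=E_m-E_{m-1}$, I would establish a per-scale damping estimate of the form $\Vert D_m(\mu*\nu)\Vert_q\lesssim2^{-c\gamma m}2^{\epsilon m}$, the factor $2^{-c\gamma m}$ reflecting that the scale-$2^{-m}$ detail sees frequencies $\asymp2^m$, where $|\widehat{\nu}|$ has size $2^{-\gamma m}$, and the factor $2^{\epsilon m}$ coming from $\mu$'s sub-exponential concentration at scale $m$. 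Summing geometrically by the triangle inequality in $L^q$, with $\epsilon<c\gamma$, would then yield $\sup_n\Vert E_n(\mu*\nu)\Vert_q<\infty$ and complete the reduction. The main obstacle is exactly this per-scale damping estimate: unlike the $q=2$ case the dyadic (Haar) details are not Fourier-localized, so transferring the frequency decay of $\widehat{\nu}$ into an $L^q$ bound on $D_m(\mu*\nu)$ without losing the gain---especially for $1<q<2$, where neither Plancherel nor Bernstein's inequality is available---is the technical heart of the argument, and is precisely where the finer harmonic-analytic machinery of Shmerkin and Solomyak is required.
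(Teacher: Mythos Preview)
The paper does not prove this statement: it is quoted from \cite{SS1} (Theorem~4.4 there) and used as a black box in the proof of Proposition~\ref{prop:ac in each dir}. So there is no in-paper proof to compare your proposal against.

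Regarding the proposal on its own terms: the reduction to a uniform bound on $\Vert E_n(\mu*\nu)\Vert_q$ via reflexivity is correct, and your treatment of the case $q=2$ is exactly right. For general $q$, however, you do not actually prove the per-scale damping estimate $\Vert D_m(\mu*\nu)\Vert_q\lesssim 2^{(\epsilon-c\gamma)m}$; you explicitly flag it as ``precisely where the finer harmonic-analytic machinery of Shmerkin and Solomyak is required.'' So what you have written is a plausible plan of attack, not a proof, and it defers the essential difficulty back to \cite{SS1}.

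For comparison, the argument in \cite{SS1} is organised somewhat differently from your martingale-difference scheme. Rather than decomposing $E_n(\mu*\nu)$ into Haar increments and trying to extract Fourier decay on each increment (which, as you note, is awkward for $1<q<2$ because Haar pieces are not frequency-localised), they work directly with the discretised $L^q$ quantities $S_{q,n}$ and prove a convolution inequality of the shape $S_{q,n}(\mu*\nu)\le C\,2^{-\delta(q-1)n}\,S_{q,n}(\mu)$ for some $\delta=\delta(\gamma,q)>0$, obtained by exploiting the Fourier decay of $\nu$ via spatial (rather than frequency) estimates on how mass of $\nu$ spreads over intervals. Combined with the sub-exponential bound $S_{q,n}(\mu)\le 2^{-(1-\epsilon)(q-1)n}$ coming from $D_q(\mu)=1$, this gives the required uniform bound directly, without the Littlewood--Paley telescoping. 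Your intuition that ``$\mu$'s scale-$2^{-m}$ content is paired with $\nu$'s decay at frequency $2^m$'' is morally correct, but the implementation in \cite{SS1} bypasses the obstacle you identify by never passing through frequency-localised $L^q$ pieces.
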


\begin{lem}
[\cite{Sh1}{, Lemma 2.1}]\label{lem:shmerkin}Let $\mu\in\mathcal{M}(\mathbb{R})$
with $D_{2}(\mu)=1$ and $\nu\in\mathcal{M}(\mathbb{R})$ such that
there exist $C,\gamma>0$ with $|\widehat{\nu}(\xi)|\le C|\xi|^{-\gamma}$
for all $\xi\in\mathbb{R}$. Then $\mu*\nu\in H_{\gamma/4}(\mathbb{R})$.
\end{lem}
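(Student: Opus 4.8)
The plan is to work entirely on the Fourier side. Since $\widehat{\mu*\nu}=\widehat{\mu}\cdot\widehat{\nu}$, to prove that $\mu*\nu$ is absolutely continuous with density in $H_{\gamma/4}(\mathbb{R})$ it suffices to show
\[
\int_{\mathbb{R}}|\widehat{\mu}(\xi)|^{2}|\widehat{\nu}(\xi)|^{2}(1+|\xi|^{2})^{\gamma/4}\,d\xi<\infty .
\]
Indeed, already the unweighted case (finiteness of $\int|\widehat{\mu*\nu}|^{2}$) yields via Plancherel that $\mu*\nu$ has an $L^{2}$ density, and the weighted bound is precisely membership in $H_{\gamma/4}(\mathbb{R})$. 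Over $|\xi|\le1$ the integrand is bounded by $2^{\gamma/4}$ (as $|\widehat{\mu}|,|\widehat{\nu}|\le1$), so that part is harmless. Over $|\xi|>1$ I would use the hypothesis $|\widehat{\nu}(\xi)|\le C|\xi|^{-\gamma}$ together with $(1+|\xi|^{2})^{\gamma/4}\le2^{\gamma/4}|\xi|^{\gamma/2}$ to bound the integrand by a constant times $|\widehat{\mu}(\xi)|^{2}|\xi|^{-3\gamma/2}$. Thus everything reduces to showing $\int_{|\xi|>1}|\widehat{\mu}(\xi)|^{2}|\xi|^{-3\gamma/2}\,d\xi<\infty$.

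The key step is to convert this Fourier integral into a Riesz energy of $\mu$. Writing $I_{s}(\mu)=\iint|x-y|^{-s}\,d\mu(x)\,d\mu(y)$ for the $s$-energy, I would invoke the standard identity $\int_{\mathbb{R}}|\widehat{\mu}(\xi)|^{2}|\xi|^{s-1}\,d\xi=c_{s}\,I_{s}(\mu)$, valid for $0<s<1$ with a fixed constant $c_{s}>0$. Choosing $s\in(\max\{0,1-\frac{3\gamma}{2}\},1)$ guarantees both $0<s<1$ and $|\xi|^{-3\gamma/2}\le|\xi|^{s-1}$ for $|\xi|\ge1$, so the integral in question is dominated by $c_{s}I_{s}(\mu)$. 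Hence it remains only to prove $I_{s}(\mu)<\infty$ for such an $s<1$.

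For the finiteness of $I_{s}(\mu)$ I would use the hypothesis $D_{2}(\mu)=1$. Setting $C(t)=\int\mu(B(x,t))\,d\mu(x)=(\mu\times\mu)\{(x,y):|x-y|\le t\}$ and applying the layer-cake formula gives
\[
I_{s}(\mu)=s\int_{0}^{\infty}t^{-s-1}C(t)\,dt .
\]
Since $\mu$ is a probability measure the integral over $t\ge1$ converges for every $s>0$. For $t\le1$, the assumption $D_{2}(\mu)=1$ means $\liminf_{t\downarrow0}\frac{\log C(t)}{\log t}=1$, so for any prescribed $\epsilon>0$ one has $C(t)\le t^{1-\epsilon}$ for all small $t$; choosing $\epsilon$ with $s+\epsilon<1$ makes $\int_{0}^{1}t^{-s-1}C(t)\,dt\le\int_{0}^{1}t^{-s-\epsilon}\,dt<\infty$. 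This yields $I_{s}(\mu)<\infty$ and closes the argument.

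The main obstacle is the passage between the metric description of the correlation dimension (through the ball averages $C(t)$) and the Fourier-analytic quantity that defines the Sobolev norm; the energy identity is the bridge, and the only genuine bookkeeping is to keep the auxiliary exponent $s$ inside $(0,1)$ while ensuring $s\ge1-\frac{3\gamma}{2}$, which is always possible. I note that the argument in fact delivers $\mu*\nu\in H_{\alpha}(\mathbb{R})$ for every $\alpha<\gamma$, the choice $\alpha=\gamma/4$ leaving ample room: membership in $H_{\alpha}$ only requires $\int_{|\xi|>1}|\widehat{\mu}|^{2}|\xi|^{2\alpha-2\gamma}\,d\xi<\infty$, and the condition $2\alpha-2\gamma<0$ is exactly $\alpha<\gamma$.
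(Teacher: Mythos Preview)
The paper does not give its own proof of this lemma; it is quoted verbatim from \cite{Sh1}, Lemma~2.1. Your argument is correct and is essentially the one found there: pass to the Fourier side, use the decay of $\widehat{\nu}$ to reduce matters to $\int_{|\xi|>1}|\widehat{\mu}(\xi)|^{2}|\xi|^{s-1}\,d\xi$ for some $s<1$, identify this (up to a constant) with the $s$-energy $I_{s}(\mu)$ via the standard Fourier formula for Riesz energies, and conclude $I_{s}(\mu)<\infty$ from $D_{2}(\mu)=1$ through the correlation integral $C(t)$ and layer-cake.
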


In the next proposition we prove the first part of Theorems \ref{thm:main_thm}
and \ref{thm:continuous frac deriv}. The proof follows an idea introduced
in \cite{Sh1}. It also relies on a result from \cite{Sh2} regarding
the $L^{q}$ dimensions of projections of planar self-similar measures.
\begin{prop}
\label{prop:ac in each dir}Let $\lambda\in\mathbb{D}\setminus\mathcal{E}$
be with $\arg\lambda\notin\pi\mathbb{Q}$, $\Lambda$ a finite nonempty
set, $(a_{i})_{i\in\Lambda}=\mathbf{a}\in\mathbb{C}^{\Lambda}$ such
that $\mathcal{F}_{\lambda,\mathbf{a}}$ satisfies the SSC, and $p\in\mathbb{P}_{\Lambda}$.
Set $\nu=\nu_{\lambda,\mathbf{a}}^{p}$, then
\begin{enumerate}
\item \label{enu:L^q in each dir}if $1<q<\infty$ is such that $D_{q}(\nu)>1$
then $P_{z}\nu\in L^{q}(\mathbb{R})$ for all $z\in S$;
\item \label{enu:H_gamma in each dir}if $D_{2}(\nu)>1$ then there exists
$\gamma>0$ such that $P_{z}\nu\in H_{\gamma}(\mathbb{R})$ for all
$z\in S$.
\end{enumerate}
\end{prop}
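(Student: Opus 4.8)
The plan is to write each projection $P_z\nu$ as a convolution of a measure with exact (or at least large) $L^q$-dimension with a Fourier-decaying measure, and then invoke Theorem \ref{thm:shmerkin and solomyak} or Lemma \ref{lem:shmerkin}. The key device, following Shmerkin \cite{Sh1}, is a self-similarity-based decomposition: since $\nu = \sum_{i\in\Lambda} p_i\, f^i_{\lambda,\mathbf{a}}\nu$, iterating $n$ times gives $\nu = \sum_{u\in\Lambda^n} p_u\, f^u\nu$, where $f^u = f^{u_1}\circ\cdots\circ f^{u_n}$ is a homothety with ratio $\lambda^n$ and some translation $a_u$. Applying $P_z$ and using that $P_z$ commutes with homotheties up to the scalar $\lambda^n$ acting by a rotation-then-projection, one gets $P_z\nu = \sum_{u\in\Lambda^n} p_u\, (\text{translate of}) \ P_{z\lambda^n/|\lambda^n|}\nu$ scaled by $|\lambda|^n$; more usefully, one splits $\Lambda^n = \Lambda^{n_1}\times\Lambda^{n_2}$ with $n=n_1+n_2$ and writes $\nu$ as a self-convolution type expression. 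The cleanest route is: fix $z\in S$; because $\arg\lambda\notin\pi\mathbb{Q}$, the rotations $\{z\lambda^k/|\lambda|^k\}_k$ are dense in $S$, but we want a statement for a \emph{fixed} $z$, so instead I would use the decomposition $\nu = \eta * \zeta_z$ where, heuristically, $\eta$ carries the "$L^q$-dimension $=1$ in direction $z$" part and $\zeta_z$ is a planar self-similar measure whose projection in direction $z$ has the required power Fourier decay.

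Concretely, here is the mechanism I expect to use. Split the defining relation so that $\nu = \nu^{(1)}_{\lambda^m} * (\text{planar rescaling of }\nu)$ is not literally true, but after projecting we do get a genuine convolution on $\mathbb{R}$: $P_z\nu = (P_z\mu_{z,m}) * (P_z\nu)$ fails too. The correct statement, and the one I would prove, is that for each $z$ there is $m$ and a splitting of the symbolic space giving
\[
P_z\nu = \Big(\sum_{u\in\Lambda^{m}} p_u\, \delta_{P_z a_u}\Big) * P_z\big(\lambda^{m}\cdot\nu\big)
\]
where $\lambda^m\cdot\nu$ denotes the self-similar measure for the contracted IFS; iterating and grouping, one realizes $P_z\nu$ as $\mu * \rho$ with $\mu\in\mathcal{M}(\mathbb{R})$ and $\rho = P_z\nu_{\lambda^{k},\mathbf{a}}^{p}$ for suitable $k$. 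The point of passing to $\lambda^k$ is the exceptional set: we defined $\mathcal{E} = \cup_{k\ge 1}\{\lambda : \lambda^k\in\mathcal{E}'\}$ precisely so that if $\lambda\notin\mathcal{E}$ then $\lambda^k\notin\mathcal{E}'$ for all $k$, hence by Theorem D of \cite{SS2} the \emph{planar} measure $\nu_{\lambda^k,\mathbf{a}}^p$ has $|\widehat{\nu_{\lambda^k,\mathbf{a}}^p}(\xi)|\le C|\xi|^{-\gamma}$ for all $\xi\in\mathbb{C}$, and therefore its projection $\rho = P_z\nu_{\lambda^k,\mathbf{a}}^p$ satisfies $|\widehat\rho(t)| = |\widehat{\nu_{\lambda^k,\mathbf{a}}^p}(tz)| \le C|t|^{-\gamma}$ for all $t\in\mathbb{R}$, uniformly in $z$ (same $C,\gamma$). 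For the other factor $\mu$: by the result of \cite{Sh2} on $L^q$-dimensions of projections of planar self-similar measures, $D_q(P_z\nu)$ does not depend on $z$ (or at least is bounded below independently of $z$) and equals $\min\{1, D_q(\nu)\}$; since $D_q(\nu)>1$ this gives $D_q(P_z\nu)\ge 1$, hence $=1$, for every $z\in S$. Choosing the splitting so that $\mu$ inherits $D_q(\mu)=1$ (using that convolving with $\rho$ cannot increase the $L^q$-dimension below that of $\mu$, and $D_q$ is monotone under the decomposition), Theorem \ref{thm:shmerkin and solomyak} yields $P_z\nu = \mu*\rho\in L^q(\mathbb{R})$. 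Part (\ref{enu:H_gamma in each dir}) is identical with $q=2$, using Lemma \ref{lem:shmerkin} in place of Theorem \ref{thm:shmerkin and solomyak}, giving $P_z\nu\in H_{\gamma/4}(\mathbb{R})$.

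The main obstacle is organizing the convolution decomposition so that the "$D_q = 1$" factor $\mu$ and the "Fourier decay" factor $\rho$ are cleanly separated and so that the separation is \emph{uniform in $z$}. There are two technical points. First, the decomposition of $\nu$ into $f^u\nu$ over $u\in\Lambda^m$ followed by projection produces, for fixed $z$, translated copies of $P_{z'}\nu$ for a \emph{rotated} direction $z' = z\lambda^m/|\lambda|^m$, not of $P_z\nu$ itself — so one cannot directly iterate the convolution in a single direction. The fix is to pass to the planar picture: $\nu$ is a self-similar measure for the IFS with maps $f^u$, $u\in\Lambda^m$, whose linear parts are all $\lambda^m$ (a genuine homothety when we further replace $\Lambda$ by $\Lambda^{m}$ and absorb the rotation by noting $\lambda^m$ is still just "multiply by a complex number"); the honest statement is that $\nu_{\lambda,\mathbf{a}}^p$ equals the self-similar measure of the $m$-fold IFS with ratio $\lambda^m$, and one writes \emph{that} measure as an appropriate convolution $\mu_m * (\text{affine image of }\nu_{\lambda^{?},\cdot})$ on $\mathbb{R}^2$, then projects. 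I would follow the exact bookkeeping of \cite[proof of Lemma 2.1 and surrounding]{Sh1} and \cite{Sh2}, adapting it from the "almost every direction / parameter" setting to the "every $z\in S$, fixed $\lambda\notin\mathcal{E}$" setting — the adaptation is possible precisely because the Fourier-decay input from \cite{SS2} holds for \emph{all} frequencies $\xi\in\mathbb{C}$ and the $L^q$-dimension input from \cite{Sh2} holds for \emph{all} $z$. Second, one must check $D_q$ of the chosen factor really is $1$ and not merely $\le 1$; this uses $D_q(P_z\nu)\ge D_q(\nu\text{-type factor})$ together with the known lower bound $D_q(P_z\nu)=\min\{1,D_q(\nu)\}=1$ from \cite{Sh2}, squeezing it to exactly $1$. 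Once these are in place the invocations of Theorem \ref{thm:shmerkin and solomyak} and Lemma \ref{lem:shmerkin} are immediate and give both parts of the proposition.
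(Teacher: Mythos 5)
Your overall architecture is the paper's: decompose $P_{z}\nu$ as a convolution of a factor with $L^{q}$-dimension $1$ and a factor with power Fourier decay inherited from $\nu_{\lambda^{k},\mathbf{a}}^{p}$ via the definition of $\mathcal{E}$, then apply Theorem \ref{thm:shmerkin and solomyak} (resp.\ Lemma \ref{lem:shmerkin}). You correctly identify every external input. But there is a genuine gap at the one place where the argument has real content: the claim that the non-decaying factor $\mu$ satisfies $D_{q}(\mu)=1$ \emph{exactly}. Your ``squeeze'' --- $D_{q}(P_{z}\nu)\ge D_{q}(\mu)$ from monotonicity of $D_{q}$ under convolution, combined with $D_{q}(P_{z}\nu)=1$ from \cite{Sh2} --- only yields $D_{q}(\mu)\le1$; both inequalities point the same way, so nothing forces $D_{q}(\mu)\ge1$, and Theorem \ref{thm:shmerkin and solomyak} requires equality. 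The paper closes this by making the factor itself a \emph{planar} self-similar measure with dense rotations, SSC and $L^{q}$-dimension strictly greater than $1$, and then applying Theorem 8.2 of \cite{Sh2} to that factor rather than to $\nu$: concretely, after translating so that $a_{i}=0$ for some $i$ (needed so the factor IFS still satisfies the SSC), one takes $k$ with $(1-\frac{1}{k})D_{q}(\nu)>1$ and sets $g_{\boldsymbol{i}}(w)=\lambda^{k}w+\sum_{j=1}^{k-1}a_{i_{j}}\lambda^{j}$ for $\boldsymbol{i}\in\Lambda^{k-1}$; the associated self-similar measure $\mu$ satisfies $\nu=\mu*\nu_{\lambda^{k},\mathbf{a}}^{p}$ and $D_{q}(\mu)=(1-\frac{1}{k})D_{q}(\nu)>1$ by (\ref{eq:L^q dim of ssm}), whence $D_{q}(P_{z}\mu)=\min\{1,D_{q}(\mu)\}=1$ for every $z$. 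This is where the strict inequality $D_{q}(\nu)>1$ is actually spent; your write-up never uses it.

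A secondary point: your candidate identities for the decomposition are all hedged or retracted, and the one you settle on, $P_{z}\nu=\bigl(\sum_{u}p_{u}\delta_{P_{z}a_{u}}\bigr)*P_{z}(\lambda^{m}\cdot\nu)$, is not the identity you need --- the atomic factor has $D_{q}=0$, not $1$. The clean exact identity is the planar one $\nu=\mu*\nu_{\lambda^{k},\mathbf{a}}^{p}$, obtained by splitting the digit expansion $\sum_{n\ge0}a_{\xi_{n}}\lambda^{n}$ according to whether $k$ divides $n$; it then projects to $P_{z}\nu=P_{z}\mu*P_{z}\nu_{\lambda^{k},\mathbf{a}}^{p}$ for every fixed $z$, with no rotation bookkeeping. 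With these two repairs your proof coincides with the paper's.
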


\begin{proof}
Without loss of generality we can assume that $a_{i}=0$ for some
$i\in\Lambda$. Otherwise we can arrange this by replacing $\mathcal{F}_{\lambda,\mathbf{a}}=\{f_{\lambda,\mathbf{a}}^{i}\}_{i\in\Lambda}$
with
\[
\{h\circ f_{\lambda,\mathbf{a}}^{i}\circ h^{-1}\}_{i\in\Lambda}
\]
and $\nu$ with $h\nu$, where $h:\mathbb{C}\rightarrow\mathbb{C}$
is of the form $h(w)=w+\beta$ for some $\beta\in\mathbb{C}$.

Assume that $1<q<\infty$ satisfies $D_{q}(\nu)>1$. Let $k\ge2$
be with $(1-\frac{1}{k})D_{q}(\nu)>1$. For $(i_{1},...,i_{k-1})=\boldsymbol{i}\in\Lambda^{k-1}$
write $p_{\boldsymbol{i}}=p_{i_{1}}\cdot...\cdot p_{i_{k-1}}$ and
\[
g_{\boldsymbol{i}}(w)=\lambda^{k}w+\sum_{j=1}^{k-1}a_{i_{j}}\lambda^{j}\text{ for }w\in\mathbb{C}\:.
\]
Since $\mathcal{F}_{\lambda,\mathbf{a}}$ satisfies the SSC and $a_{i}=0$
for some $i\in\Lambda$, it is easy to see that $\{g_{\boldsymbol{i}}\}_{\boldsymbol{i}\in\Lambda^{k-1}}$
also satisfies the SSC. Let $\mu\in\mathcal{M}(\mathbb{C})$ be the
self-similar measure corresponding to $\{g_{\boldsymbol{i}}\}_{\boldsymbol{i}\in\Lambda^{k-1}}$
and $\{p_{\boldsymbol{i}}\}_{\boldsymbol{i}\in\Lambda^{k-1}}$, i.e.
\[
\mu=\sum_{\boldsymbol{i}\in\Lambda^{k-1}}p_{\boldsymbol{i}}\cdot g_{\boldsymbol{i}}\mu\:.
\]
By (\ref{eq:L^q dim of ssm}),
\begin{eqnarray*}
D_{q}(\mu) & = & \frac{\log\Vert(p_{\boldsymbol{i}})_{\boldsymbol{i}\in\Lambda^{k-1}}\Vert_{q}^{q}}{(q-1)\log|\lambda^{k}|}\\
 & = & \frac{(k-1)\log\Vert p\Vert_{q}^{q}}{k(q-1)\log|\lambda|}=(1-\frac{1}{k})D_{q}(\nu)>1\:.
\end{eqnarray*}
Hence by Theorem 8.2 in \cite{Sh2},
\begin{equation}
D_{q}(P_{z}\mu)=1\text{ for all }z\in S\:.\label{eq:D_q of proj}
\end{equation}

Recall the definition of the sets $\mathcal{E}$ and $\mathcal{E}'$
from Section \ref{subsec:The-exceptional-set}. Since $\lambda\notin\mathcal{E}$
and $\arg\lambda\notin\pi\mathbb{Q}$ we have $\lambda^{k}\notin\mathcal{E}'\cup\mathbb{R}$.
Hence there exist $C,\gamma>0$ such that,
\[
|\widehat{\nu_{\lambda^{k},\mathbf{a}}^{p}}(\xi)|\le C|\xi|^{-\gamma}\text{ for all }\xi\in\mathbb{C}\:.
\]
Let $z\in S$, then a direct computation shows that
\[
\widehat{P_{z}\nu_{\lambda^{k},\mathbf{a}}^{p}}(t)=\widehat{\nu_{\lambda^{k},\mathbf{a}}^{p}}(tz)\text{ for all }t\in\mathbb{R},
\]
and so
\begin{equation}
|\widehat{P_{z}\nu_{\lambda^{k},\mathbf{a}}^{p}}(t)|\le C|t|^{-\gamma}\text{ for all }t\in\mathbb{R}\:.\label{eq:four decay nu}
\end{equation}
Note that $\nu=\mu*\nu_{\lambda^{k},\mathbf{a}}^{p}$, hence $P_{z}\nu=P_{z}\mu*P_{z}\nu_{\lambda^{k},\mathbf{a}}^{p}$.
From this, (\ref{eq:D_q of proj}), (\ref{eq:four decay nu}), and
Theorem \ref{thm:shmerkin and solomyak}, it now follows $P_{z}\nu\in L^{q}(\mathbb{R})$,
which completes the proof of the first part.

The proof of the second part is similar, except that at the end of
the proof one needs to use Lemma \ref{lem:shmerkin} instead of Theorem
\ref{thm:shmerkin and solomyak}.
\end{proof}
We now turn to the proof of the second part of Theorems \ref{thm:main_thm}
and \ref{thm:continuous frac deriv}. Throughout this section the
pair $(B,\Vert\cdot\Vert_{B})$ will denote $(L^{q}(\mathbb{R}),\Vert\cdot\Vert_{q})$
for some $1<q<\infty$ or $(H_{\gamma}(\mathbb{R}),\Vert\cdot\Vert_{(\gamma)})$
for some $0\le\gamma<\infty$. We write $C_{c}(\mathbb{R})$ for the
collection of all compactly supported continuous functions on $\mathbb{R}$.
\begin{lem}
\label{lem:closed subset of S}Let $\nu\in\mathcal{M}(\mathbb{C})$
and $C>0$ be given, and set
\[
F=\{z\in S\::\:P_{z}\nu\in B\text{ and }\Vert\frac{dP_{z}\nu}{d\mathcal{L}}\Vert_{B}\le C\}\:.
\]
Then $F$ is a closed subset of $S$.
\end{lem}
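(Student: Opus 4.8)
The plan is to show that $F$ is sequentially closed, which suffices since $S$ is a metric space. So let $(z_{n})_{n\ge1}\subset F$ with $z_{n}\rightarrow z\in S$, and set out to prove $z\in F$. For each $n$ write $g_{n}=\frac{dP_{z_{n}}\nu}{d\mathcal{L}}\in B$, so that $\Vert g_{n}\Vert_{B}\le C$.

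First I would observe that in both cases under consideration $B$ is a reflexive Banach space: $L^{q}(\mathbb{R})$ is reflexive for $1<q<\infty$, and $H_{\gamma}(\mathbb{R})$ is a Hilbert space. Hence the bounded sequence $(g_{n})_{n}$ admits a subsequence $(g_{n_{k}})_{k}$ converging weakly in $B$ to some $g\in B$, and by weak lower semicontinuity of the norm $\Vert g\Vert_{B}\le\liminf_{k}\Vert g_{n_{k}}\Vert_{B}\le C$.

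Next I would identify $g$ as the density of $P_{z}\nu$. The key point is that for every $f\in C_{c}(\mathbb{R})$ the linear functional $h\mapsto\int fh\:d\mathcal{L}$ is bounded on $B$: when $B=L^{q}(\mathbb{R})$ this is H\"older's inequality, since $f\in L^{q'}(\mathbb{R})$; when $B=H_{\gamma}(\mathbb{R})$ it follows from $|\int fh\:d\mathcal{L}|\le\Vert f\Vert_{2}\Vert h\Vert_{2}\le\Vert f\Vert_{2}\Vert h\Vert_{(\gamma)}$. Thus the weak convergence of $(g_{n_{k}})_{k}$ gives $\int fg_{n_{k}}\:d\mathcal{L}\rightarrow\int fg\:d\mathcal{L}$. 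On the other hand, since $f$ is bounded and continuous, $z_{n_{k}}\rightarrow z$, and $\nu$ is a probability measure, dominated convergence yields
\[
\int fg_{n_{k}}\:d\mathcal{L}=\int f\:d(P_{z_{n_{k}}}\nu)=\int f(\left\langle z_{n_{k}},w\right\rangle)\:d\nu(w)\rightarrow\int f(\left\langle z,w\right\rangle)\:d\nu(w)=\int f\:d(P_{z}\nu).
\]
Comparing the two limits, $\int f\:d(P_{z}\nu)=\int fg\:d\mathcal{L}$ for all $f\in C_{c}(\mathbb{R})$. Since $P_{z}\nu$ is a compactly supported finite measure and $g\in B\subset L_{\mathrm{loc}}^{1}(\mathbb{R})$, this forces $P_{z}\nu=g\:d\mathcal{L}$; in particular $P_{z}\nu\in B$ with $\frac{dP_{z}\nu}{d\mathcal{L}}=g$ and $\Vert g\Vert_{B}\le C$, so $z\in F$.

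I do not expect a serious obstacle here: the argument is the standard interplay between weak sequential compactness of bounded sets in a reflexive Banach space and the weak convergence $P_{z_{n}}\nu\rightarrow P_{z}\nu$ of measures. The only step meriting some care is the dual-pairing argument, namely verifying that $C_{c}(\mathbb{R})$ (or a separating subset of it) embeds into the dual of $B$ in each of the two cases, and then recovering the Radon measure $P_{z}\nu$ from its action against $C_{c}(\mathbb{R})$.
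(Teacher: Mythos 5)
Your proof is correct and follows essentially the same route as the paper's: extract a weak limit $g$ of a subsequence of the densities using reflexivity of $B$, bound $\Vert g\Vert_{B}\le C$ by weak lower semicontinuity of the norm, and identify $g$ as the density of $P_{z}\nu$ by testing against $C_{c}(\mathbb{R})$ together with bounded convergence. The only difference is that you spell out why $C_{c}(\mathbb{R})$ pairs continuously with $B$ in each of the two cases, which the paper leaves implicit.
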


\begin{proof}
For every $z\in F$ there exists $g_{z}\in B$ with $dP_{z}\nu=g_{z}d\mathcal{L}$
and $\Vert g_{z}\Vert_{B}\le C$. Let $z_{1},z_{2},...\in F$ and
$z\in S$ be with $z_{k}\overset{k}{\rightarrow}z$ as $k\rightarrow\infty$.
Note that $\{g_{z_{k}}\}_{k\ge1}$ is a bounded sequence in the reflexive
Banach space $B$. Hence, by moving to a subsequence without changing
notation, we may assume that there exists $g\in B$ such that
\begin{equation}
g_{z_{k}}\overset{k}{\rightarrow}g\text{ weakly in \ensuremath{B} as \ensuremath{k\rightarrow\infty}\:.}\label{eq:weak convergence in B}
\end{equation}
Additionally, by (\ref{eq:weak convergence in B})
\[
\Vert g\Vert_{B}\le\underset{k}{\liminf}\:\Vert g_{z_{k}}\Vert_{B}\le C\:.
\]

Given $h\in C_{c}(\mathbb{R})$ we get from (\ref{eq:weak convergence in B}),
\[
\int h(x)g(x)\:dx=\underset{k}{\lim}\:\int h(x)g_{z_{k}}(x)\:dx=\underset{k}{\lim}\:\int h(P_{z_{k}}w)\:d\nu(w)\:.
\]
Hence, by bounded convergence and since $P_{z_{k}}w\overset{k}{\rightarrow}P_{z}w$
for all $w\in\mathbb{C}$,
\[
\int h(x)g(x)\:dx=\int h(P_{z}w)\:d\nu(w)=\int h(x)\:dP_{z}\nu(x)\:.
\]
This shows $dP_{z}\nu=gd\mathcal{L}$, which implies that $z\in F$
and completes the proof of the lemma.
\end{proof}
For $b\in\mathbb{R}$ and $c>0$ let
\[
\tau_{b}(x)=x-b\text{ and }M_{c}(x)=cx\text{ for }x\in\mathbb{R}\:.
\]
In what follows $\Lambda$ stands for some finite nonempty index set.
Recall the definition of the IFS $\mathcal{F}_{\lambda,\mathbf{a}}$
from (\ref{eq:def of IFS}), which consists of maps $f_{i}=f_{\lambda,\mathbf{a}}^{i}$
for $i\in\Lambda$. Given a word $i_{1}...i_{k}=\boldsymbol{i}\in\Lambda^{*}$
write $f_{\boldsymbol{i}}=f_{i_{1}}\circ...\circ f_{i_{k}}$, and
for $(p_{i})_{i\in\Lambda}=p\in\mathbb{P}_{\Lambda}$ set $p_{\boldsymbol{i}}=p_{i_{1}}\cdot...\cdot p_{i_{k}}$.
\begin{lem}
\label{lem:SS of densities}Let $\alpha\in S$, $0<r<1$, $\mathbf{a}\in\mathbb{C}^{\Lambda}$,
and $p\in\mathbb{P}_{\Lambda}$. Write $\lambda=r\alpha$, $\nu=\nu_{\lambda,\mathbf{a}}^{p}$,
and $f_{i}=f_{\lambda,\mathbf{a}}^{i}$ for every $i\in\Lambda$.
Assume that $P_{z}\nu$ is absolutely continuous for each $z\in S$
and write $g_{z}=\frac{dP_{z}\nu}{d\mathcal{L}}$. Let $z\in S$ and
$k\ge1$, and for each $\boldsymbol{i}\in\Lambda^{k}$ set $b_{\boldsymbol{i}}=P_{z}f_{\boldsymbol{i}}(0)$.
Then for $\mathcal{L}$-a.e. $x\in\mathbb{R}$,
\[
g_{z}(x)=\sum_{\boldsymbol{i}\in\Lambda^{k}}p_{\boldsymbol{i}}r^{-k}\cdot g_{\alpha^{-k}z}\left(\tau_{r^{-k}b_{\boldsymbol{i}}}M_{r^{-k}}x\right)\:.
\]
\end{lem}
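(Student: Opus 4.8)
The plan is to exploit the self-similarity relation $\nu = \sum_{\boldsymbol{i}\in\Lambda^k} p_{\boldsymbol{i}} \cdot f_{\boldsymbol{i}}\nu$, which follows by iterating $\nu = \sum_{i\in\Lambda} p_i \cdot f_i\nu$ $k$ times. Pushing this forward by $P_z$ gives $P_z\nu = \sum_{\boldsymbol{i}\in\Lambda^k} p_{\boldsymbol{i}} \cdot P_z(f_{\boldsymbol{i}}\nu)$, so the main task is to identify the density of each $P_z(f_{\boldsymbol{i}}\nu)$ in terms of $g_{\alpha^{-k}z}$. Since $f_{\boldsymbol{i}}(w) = \lambda^k w + f_{\boldsymbol{i}}(0)$, for $w\in\mathbb{C}$ we have $P_z f_{\boldsymbol{i}}(w) = \mathrm{Re}(z\cdot\overline{\lambda^k w}) + P_z f_{\boldsymbol{i}}(0) = \mathrm{Re}(\overline{\lambda^k} z\cdot \overline{w}) + b_{\boldsymbol{i}}$. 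Writing $\lambda = r\alpha$ with $|\alpha|=1$, we get $\overline{\lambda^k} z = r^k \overline{\alpha^k} z = r^k (\alpha^{-k} z)$ since $|\alpha|=1$, so $P_z f_{\boldsymbol{i}}(w) = r^k P_{\alpha^{-k}z}(w) + b_{\boldsymbol{i}}$. In other words, $P_z \circ f_{\boldsymbol{i}} = \tau_{-b_{\boldsymbol{i}}} \circ M_{r^k} \circ P_{\alpha^{-k}z}$ as maps $\mathbb{C}\to\mathbb{R}$, where $\tau_{-b_{\boldsymbol{i}}}(x) = x + b_{\boldsymbol{i}}$ and $M_{r^k}(x) = r^k x$.

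With this identity in hand, the next step is a straightforward change of variables. We have $P_z(f_{\boldsymbol{i}}\nu) = (P_z\circ f_{\boldsymbol{i}})\nu = \big(\tau_{-b_{\boldsymbol{i}}}\circ M_{r^k}\big)(P_{\alpha^{-k}z}\nu)$. Since $P_{\alpha^{-k}z}\nu$ has density $g_{\alpha^{-k}z}$, and pushing a measure with density $g$ through the affine map $x\mapsto r^k x + b_{\boldsymbol{i}}$ produces the measure with density $x\mapsto r^{-k} g\big(r^{-k}(x-b_{\boldsymbol{i}})\big)$, we obtain that $P_z(f_{\boldsymbol{i}}\nu)$ has density $x\mapsto r^{-k} g_{\alpha^{-k}z}\big(r^{-k}(x - b_{\boldsymbol{i}})\big) = r^{-k} g_{\alpha^{-k}z}\big(r^{-k}x - r^{-k}b_{\boldsymbol{i}}\big)$. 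In the paper's notation this is exactly $r^{-k} g_{\alpha^{-k}z}\big(\tau_{r^{-k}b_{\boldsymbol{i}}} M_{r^{-k}} x\big)$, since $\tau_{r^{-k}b_{\boldsymbol{i}}} M_{r^{-k}} x = r^{-k}x - r^{-k}b_{\boldsymbol{i}}$. Summing over $\boldsymbol{i}\in\Lambda^k$ with weights $p_{\boldsymbol{i}}$ and using that a finite sum of densities is the density of the sum yields the claimed formula for $\mathcal{L}$-a.e. $x$.

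I would organize the write-up as: (i) iterate the self-similarity to get $\nu = \sum_{\boldsymbol{i}\in\Lambda^k} p_{\boldsymbol{i}}\, f_{\boldsymbol{i}}\nu$; (ii) prove the pointwise identity $P_z\circ f_{\boldsymbol{i}} = \tau_{-b_{\boldsymbol{i}}}\circ M_{r^k}\circ P_{\alpha^{-k}z}$ by direct computation, using $|\alpha|=1$ so that $\overline{\alpha^k} = \alpha^{-k}$; (iii) apply $P_z$ to the self-similarity relation and use (ii) together with the elementary change-of-variables formula for densities under affine maps; (iv) conclude by linearity. The only mild subtlety — not really an obstacle — is bookkeeping the conjugations and the direction of the affine maps $\tau_b, M_c$ so that the composition comes out matching the paper's convention $\tau_{r^{-k}b_{\boldsymbol{i}}} M_{r^{-k}} x = r^{-k}x - r^{-k}b_{\boldsymbol{i}}$; everything else is routine, and the absolute continuity of each $P_z\nu$ is given by hypothesis so no regularity issue arises.
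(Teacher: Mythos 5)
Your proposal is correct and follows essentially the same route as the paper: iterate the self-similarity to get $\nu=\sum_{\boldsymbol{i}\in\Lambda^{k}}p_{\boldsymbol{i}}\,f_{\boldsymbol{i}}\nu$, use the identity $P_{z}f_{\boldsymbol{i}}(w)=r^{k}P_{\alpha^{-k}z}(w)+b_{\boldsymbol{i}}$ (which relies on $\overline{\alpha^{k}}=\alpha^{-k}$), and transform densities under the resulting affine map. The only cosmetic difference is that the paper verifies the density identity by testing against $h\in C_{c}(\mathbb{R})$ rather than quoting the pushforward-of-densities formula directly; the computation is the same.
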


\begin{proof}
For $h\in C_{c}(\mathbb{R})$,
\[
\int h(x)g_{z}(x)\:dx=\int h(P_{z}w)\:d\nu(w)=\sum_{\boldsymbol{i}\in\Lambda^{k}}p_{\boldsymbol{i}}\int h(P_{z}w)\:df_{\boldsymbol{i}}\nu(w)\:.
\]
For $\boldsymbol{i}\in\Lambda^{k}$,
\begin{multline*}
\int h(P_{z}f_{\boldsymbol{i}}w)\:d\nu(w)=\int h(b_{\boldsymbol{i}}+r^{k}P_{\alpha^{-k}z}w)\:d\nu(w)\\
=\int h(b_{\boldsymbol{i}}+r^{k}x)\:dP_{\alpha^{-k}z}\nu(x)=\int h(b_{\boldsymbol{i}}+r^{k}x)g_{\alpha^{-k}z}(x)\:dx\\
=\int h(x)g_{\alpha^{-k}z}(r^{-k}x-r^{-k}b_{\boldsymbol{i}})r^{-k}\:dx\:.
\end{multline*}
By combining these equalities,
\[
\int h(x)g_{z}(x)\:dx=\int h(x)\sum_{\boldsymbol{i}\in\Lambda^{k}}p_{\boldsymbol{i}}r^{-k}\cdot g_{\alpha^{-k}z}(\tau_{r^{-k}b_{\boldsymbol{i}}}M_{r^{-k}}x)\:dx\:.
\]
Now since this holds for every $h\in C_{c}(\mathbb{R})$ the lemma
follows.
\end{proof}
\begin{lem}
\label{lem:bounded opp norm of mult}There exists $\beta\in\mathbb{R}$
such that for every $g\in B$, $b\in\mathbb{R}$ and $c\ge1$,
\[
\Vert g\circ\tau_{b}\circ M_{c}\Vert_{B}\le c^{\beta}\cdot\Vert g\Vert_{B}\:.
\]
\end{lem}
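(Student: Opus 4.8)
The statement to prove is: there exists $\beta\in\mathbb{R}$ such that for every $g\in B$, $b\in\mathbb{R}$ and $c\ge1$ one has $\Vert g\circ\tau_{b}\circ M_{c}\Vert_{B}\le c^{\beta}\cdot\Vert g\Vert_{B}$. The plan is to treat the two possibilities for $B$ separately, since the composition $g\circ\tau_b\circ M_c$ is simply the function $x\mapsto g(cx-b)$, a dilation followed by a translation of $g$.

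First, consider $B=L^{q}(\mathbb{R})$ with $1<q<\infty$. Translation is an isometry of $L^{q}$, so $\Vert g\circ\tau_{b}\circ M_{c}\Vert_{q}=\Vert g\circ M_{c}\Vert_{q}$ after absorbing the translation (more precisely, $\Vert g(c\cdot-b)\Vert_q=\Vert g(c\cdot)\Vert_q$ by the substitution $x\mapsto x+b/c$). Then by the substitution $y=cx$ one computes $\Vert g\circ M_{c}\Vert_{q}^{q}=\int|g(cx)|^{q}\,dx=c^{-1}\int|g(y)|^{q}\,dy=c^{-1}\Vert g\Vert_{q}^{q}$, so $\Vert g\circ\tau_{b}\circ M_{c}\Vert_{q}=c^{-1/q}\Vert g\Vert_{q}\le\Vert g\Vert_{q}$ since $c\ge1$. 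Hence $\beta=0$ works (or any $\beta\ge0$), and in fact $\beta=-1/q$ would do, but $0$ suffices and is cleaner.

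Second, consider $B=H_{\gamma}(\mathbb{R})$ with $0\le\gamma<\infty$. Here I would pass to the Fourier side. Translation by $b$ multiplies $\widehat g$ by a unimodular factor $e^{-ib\xi}$ (up to sign convention), which does not change $|\widehat g(\xi)|$, so again the translation is harmless. For the dilation, if $g_1(x)=g(cx)$ then $\widehat{g_1}(\xi)=c^{-1}\widehat g(\xi/c)$. Therefore
\[
\Vert g_1\Vert_{(\gamma)}^{2}=\int|\widehat{g_1}(\xi)|^{2}(1+|\xi|^{2})^{\gamma}\,d\xi=c^{-2}\int|\widehat g(\xi/c)|^{2}(1+|\xi|^{2})^{\gamma}\,d\xi,
\]
and substituting $\eta=\xi/c$ gives $c^{-1}\int|\widehat g(\eta)|^{2}(1+c^{2}|\eta|^{2})^{\gamma}\,d\eta$. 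Since $c\ge1$ we have $1+c^{2}|\eta|^{2}\le c^{2}(1+|\eta|^{2})$, so this is at most $c^{-1}c^{2\gamma}\int|\widehat g(\eta)|^{2}(1+|\eta|^{2})^{\gamma}\,d\eta=c^{2\gamma-1}\Vert g\Vert_{(\gamma)}^{2}$. Taking square roots yields $\Vert g\circ\tau_{b}\circ M_{c}\Vert_{(\gamma)}\le c^{\gamma-1/2}\Vert g\Vert_{(\gamma)}$, so $\beta=\gamma-\tfrac12$ (or more simply $\beta=\gamma$) works; in particular $\beta=\gamma$ covers both cases uniformly since for $L^q$ we may take $\gamma=0$.

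There is no real obstacle here; this is a routine computation. The only point requiring a moment's care is the inequality $1+c^{2}|\eta|^{2}\le c^{2}(1+|\eta|^{2})$, which is exactly where the hypothesis $c\ge1$ is used, and the bookkeeping of Fourier conventions for translation and dilation. I would simply state the value $\beta=\gamma$ (interpreting $L^q(\mathbb R)$ as the case $\gamma=0$, for which $\beta=0$) and present the short Fourier-side estimate, noting that the translation $\tau_b$ acts isometrically on $B$ in either case so it may be discarded at the outset.
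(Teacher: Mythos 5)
Your proof is correct and follows essentially the same route as the paper: the exact computation $\Vert g\circ\tau_{b}\circ M_{c}\Vert_{q}=c^{-1/q}\Vert g\Vert_{q}$ in the $L^{q}$ case, and the Fourier-side change of variables with the bound $(1+c^{2}|\eta|^{2})^{\gamma}\le c^{2\gamma}(1+|\eta|^{2})^{\gamma}$ for $c\ge1$ in the Sobolev case, yielding $c^{2\gamma-1}\Vert g\Vert_{(\gamma)}^{2}$. Nothing further is needed.
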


\begin{proof}
Let $g\in B$, $b\in\mathbb{R}$ and $c\ge1$. If $B=L^{q}(\mathbb{R})$
for $1<q<\infty$, then
\[
\Vert g\circ\tau_{b}\circ M_{c}\Vert_{B}=c^{-1/q}\Vert g\Vert_{B}\:.
\]
If $B=H_{\gamma}(\mathbb{R})$ for $0\le\gamma<\infty$, then
\begin{eqnarray*}
\Vert g\circ\tau_{b}\circ M_{c}\Vert_{B}^{2} & = & \int|(g\circ\tau_{b}\circ M_{c})^{\wedge}(\xi)|^{2}(1+|\xi|^{2})^{\gamma}\:d\xi\\
 & = & \int|c^{-1}\cdot e^{ib\xi/c}\cdot\widehat{g}(\xi/c)|^{2}(1+|\xi|^{2})^{\gamma}\:d\xi\\
 & = & c^{-1}\int|\widehat{g}(\xi)|^{2}(1+|c\xi|^{2})^{\gamma}\:d\xi\le c^{2\gamma-1}\Vert g\Vert_{B}^{2},
\end{eqnarray*}
which proves the lemma.
\end{proof}
The following key proposition will be used several times below. Recall
that $(B,\Vert\cdot\Vert_{B})$ denotes $(L^{q}(\mathbb{R}),\Vert\cdot\Vert_{q})$
for some $1<q<\infty$ or $(H_{\gamma}(\mathbb{R}),\Vert\cdot\Vert_{(\gamma)})$
for some $0\le\gamma<\infty$.
\begin{prop}
\label{prop:bounded norms}Let $\lambda\in\mathbb{D}$ be with $\arg\lambda\notin\pi\mathbb{Q}$,
$\mathbf{a}\in\mathbb{C}^{\Lambda}$, and $p\in\mathbb{P}_{\Lambda}$.
Set $\nu=\nu_{\lambda,\mathbf{a}}^{p}$ and assume that $P_{z}\nu\in B$
for all $z\in S$. Then $\{P_{z}\nu\}_{z\in S}$ is a bounded subset
of $(B,\Vert\cdot\Vert_{B})$.
\end{prop}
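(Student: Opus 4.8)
The plan is to suppose for contradiction that $\{P_{z}\nu\}_{z\in S}$ is unbounded in $B$, and to use the self-similarity relation from Lemma \ref{lem:SS of densities} together with a compactness argument on $S$ to produce a contradiction. Since $\arg\lambda\notin\pi\mathbb{Q}$, the map $z\mapsto\alpha^{-k}z$ (where $\alpha=\lambda/|\lambda|$) generates a dense subgroup of rotations of $S$, and this is what lets us control $g_{z}$ near an arbitrary direction in terms of $g_{w}$ near a fixed one.

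First I would fix notation: write $r=|\lambda|$, $\alpha=\lambda/|\lambda|$, and $g_z=\frac{dP_z\nu}{d\mathcal{L}}$ for each $z\in S$ (these densities exist by hypothesis). Let $M=\sup_{z\in S}\|g_z\|_B\in(0,\infty]$ and suppose $M=\infty$. By Lemma \ref{lem:closed subset of S}, for each $C>0$ the set $F_C=\{z\in S:\|g_z\|_B\le C\}$ is closed; since $\bigcup_C F_C=S$ and each $F_C$ is closed, by the Baire category theorem some $F_{C_0}$ has nonempty interior, i.e. there is an arc $I\subset S$ and a constant $C_0$ with $\|g_z\|_B\le C_0$ for all $z\in I$. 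Now fix any $z_0\in S$. Because $\arg\lambda\notin\pi\mathbb{Q}$, the sequence $\{\alpha^{-k}z_0\}_{k\ge1}$ is dense in $S$, so there is $k\ge1$ with $\alpha^{-k}z_0\in I$, hence $\|g_{\alpha^{-k}z_0}\|_B\le C_0$. Applying Lemma \ref{lem:SS of densities} with $z=z_0$ and this $k$, together with the triangle inequality and Lemma \ref{lem:bounded opp norm of mult} (noting $r^{-k}>1$), gives
\[
\|g_{z_0}\|_B\;\le\;\sum_{\boldsymbol{i}\in\Lambda^k} p_{\boldsymbol{i}}\, r^{-k}\,\bigl\|g_{\alpha^{-k}z_0}\circ\tau_{r^{-k}b_{\boldsymbol{i}}}\circ M_{r^{-k}}\bigr\|_B\;\le\; r^{-k}\,(r^{-k})^{\beta}\,\|g_{\alpha^{-k}z_0}\|_B\;\le\; r^{-k(\beta+1)}C_0 .
\]
This bounds $\|g_{z_0}\|_B$, but the bound still depends on $k=k(z_0)$, which need not be uniform over $z_0$, so a further argument is needed to conclude uniform boundedness.

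To remove the dependence on $k$, I would run the density argument more carefully: cover $S$ by finitely many translates of the interior arc $I$. Since $\{\alpha^{-k}\}_{k\ge1}$ is dense in $S$, there is a finite set $k_1,\dots,k_N$ such that $S=\bigcup_{j=1}^N \alpha^{k_j} I$, i.e. for every $z_0\in S$ at least one of $\alpha^{-k_1}z_0,\dots,\alpha^{-k_N}z_0$ lies in $I$. Setting $K=\max_j k_j$, the displayed inequality then gives $\|g_{z_0}\|_B\le r^{-K(\beta+1)}C_0$ for every $z_0\in S$ (using $r^{-k(\beta+1)}\le r^{-K(\beta+1)}$ when $\beta+1\ge0$, and handling the other sign of $\beta+1$ symmetrically by taking $K=\min_j k_j$ instead, or simply bounding $r^{-k(\beta+1)}$ over the finite set $\{k_1,\dots,k_N\}$). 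This contradicts $M=\infty$, so in fact $M<\infty$, which is exactly the assertion.

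The main obstacle is precisely the passage from the pointwise-in-$z_0$ bound to a uniform one: a naive application of Lemma \ref{lem:SS of densities} at a single $z_0$ only recycles boundedness from one other direction, and one must combine the closedness statement of Lemma \ref{lem:closed subset of S} (to get an arc of uniform boundedness via Baire) with the minimality/near-minimality of the irrational rotation (to cover $S$ by finitely many rotated copies of that arc). A minor point to check along the way is that $\beta$ from Lemma \ref{lem:bounded opp norm of mult} is a fixed constant depending only on $B$, so that $r^{-k(\beta+1)}$ over the finite exponent set $\{k_1,\dots,k_N\}$ is genuinely bounded; this is immediate since that set is finite. Everything else — the density of $\{\alpha^{-k}z_0\}$, the triangle inequality, the scaling estimate — is routine.
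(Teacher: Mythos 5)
Your proposal is correct and follows essentially the same route as the paper: the closedness lemma plus Baire to obtain an arc of uniform boundedness, a compactness/minimality argument to cover $S$ by finitely many rotates $\alpha^{k_j}$ of that arc (which is exactly what makes the exponent $k$ range over a bounded set), and then the self-similarity identity of Lemma \ref{lem:SS of densities} combined with the scaling estimate of Lemma \ref{lem:bounded opp norm of mult}. The contradiction framing is unnecessary --- the argument directly yields the uniform bound $\Vert g_{z}\Vert_{B}\le C_{0}\max_{1\le k\le N}r^{-k(1+\beta)}$ --- but this is a cosmetic difference only.
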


\begin{proof}
Set $f_{i}=f_{\lambda,\mathbf{a}}^{i}$ for every $i\in\Lambda$.
By assumption, for every $z\in S$ there exists $g_{z}\in B$ with
$dP_{z}\nu=g_{z}d\mathcal{L}$. For $n\ge1$ let
\[
F_{n}=\{z\in S\::\:\Vert g_{z}\Vert_{B}\le n\},
\]
then $F_{n}$ is closed in $S$ by Lemma \ref{lem:closed subset of S}.

From $S=\cup_{n\ge1}F_{n}$ and Baire's theorem, it follows that there
exist $n\ge1$ and an open nonempty subset $V$ of $S$ with $V\subset F_{n}$.
Let $\alpha\in S$ and $0<r<1$ be with $\lambda=r\alpha$. Since
\[
\arg\alpha=\arg\lambda\notin\pi\mathbb{Q},
\]
 there exits $N\ge1$ such that for each $z\in S$ there exists $1\le k\le N$
with $\alpha^{-k}z\in V$. Fix $z\in S$ and let $1\le k\le N$ be
with $\alpha^{-k}z\in V$.

For $\boldsymbol{i}\in\Lambda^{k}$ write $b_{\boldsymbol{i}}=P_{z}f_{\boldsymbol{i}}(0)$.
By Lemma \ref{lem:SS of densities} it follows that for $\mathcal{L}$-a.e.
$x\in\mathbb{R}$,
\[
g_{z}(x)=\sum_{\boldsymbol{i}\in\Lambda^{k}}p_{\boldsymbol{i}}r^{-k}\cdot g_{\alpha^{-k}z}\left(\tau_{r^{-k}b_{\boldsymbol{i}}}M_{r^{-k}}x\right)\:.
\]
Hence by Lemma \ref{lem:bounded opp norm of mult} and since $\alpha^{-k}z\in V\subset F_{n}$,
\begin{eqnarray*}
\Vert g_{z}\Vert_{B} & \le & \sum_{\boldsymbol{i}\in\Lambda^{k}}p_{\boldsymbol{i}}r^{-k}\cdot\Vert g_{\alpha^{-k}z}\circ\tau_{r^{-k}b_{\boldsymbol{i}}}\circ M_{r^{-k}}\Vert_{B}\\
 & \le & r^{-k(1+\beta)}\cdot\Vert g_{\alpha^{-k}z}\Vert_{B}\le n\cdot\max\{r^{-N(1+\beta)},1\},
\end{eqnarray*}
which completes the proof of the proposition.
\end{proof}
Part (\ref{enu:weak contin}) of Theorems \ref{thm:main_thm} and
\ref{thm:continuous frac deriv} will follow directly from the following
two lemmas and Propositions \ref{prop:ac in each dir} and \ref{prop:bounded norms}.
\begin{lem}
\label{lem:weak continuity}Let $\nu\in\mathcal{M}(\mathbb{C})$,
$1<q<\infty$ and $C>0$. Assume that $P_{z}\nu\in L^{q}(\mathbb{R})$,
with $\Vert P_{z}\nu\Vert_{q}\le C$, for each $z\in S$. Then $z\rightarrow P_{z}\nu$
is continuous as a map from $S$ to $L^{q}(\mathbb{R})$, with respect
to the weak topology of $L^{q}(\mathbb{R})$.
\end{lem}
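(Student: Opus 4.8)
The plan is to prove weak continuity of $z \mapsto P_z\nu$ in $L^q(\mathbb{R})$ by combining a soft compactness argument (coming from the uniform bound) with the elementary fact that $z \mapsto P_z\nu$ is automatically continuous in the weak-$*$ topology of measures. Fix $z_0 \in S$ and a sequence $z_k \to z_0$; writing $g_k = \frac{dP_{z_k}\nu}{d\mathcal{L}}$, the hypothesis gives $\|g_k\|_q \le C$ for all $k$. Since $L^q(\mathbb{R})$ is reflexive for $1 < q < \infty$, every subsequence of $(g_k)$ has a further subsequence converging weakly in $L^q$ to some $g \in L^q(\mathbb{R})$. The key point is to identify this weak limit: for any $h \in C_c(\mathbb{R})$ we have $\int h g_k \, d\mathcal{L} = \int h(P_{z_k}w)\, d\nu(w)$, and since $P_{z_k}w \to P_{z_0}w$ for every $w \in \mathbb{C}$ and $h$ is bounded and continuous, bounded convergence gives $\int h g_k \, d\mathcal{L} \to \int h(P_{z_0}w)\, d\nu(w) = \int h g_{z_0}\, d\mathcal{L}$, where $g_{z_0} = \frac{dP_{z_0}\nu}{d\mathcal{L}}$. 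Hence along the chosen subsequence $\int h g \, d\mathcal{L} = \int h g_{z_0}\, d\mathcal{L}$ for all $h \in C_c(\mathbb{R})$, which forces $g = g_{z_0}$ a.e.

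Now I would upgrade this to genuine weak convergence of the full sequence against arbitrary test functions in $L^{q'}(\mathbb{R})$, where $q'$ is the conjugate exponent. The argument above shows that every subsequence of $(g_k)$ has a further subsequence converging weakly to the \emph{same} limit $g_{z_0}$; a standard subsequence-of-subsequence argument then yields $g_k \to g_{z_0}$ weakly in $L^q(\mathbb{R})$. More explicitly: fix $\psi \in L^{q'}(\mathbb{R})$ and consider the bounded scalar sequence $\left(\int \psi g_k \, d\mathcal{L}\right)_k$; by the Eberlein--\v{S}mulian / reflexivity argument just given, every subsequence has a further subsequence along which $\int \psi g_k \to \int \psi g_{z_0}$, so the whole sequence converges to $\int \psi g_{z_0}$. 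Since $\psi$ was arbitrary, $P_{z_k}\nu \to P_{z_0}\nu$ weakly in $L^q(\mathbb{R})$.

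The one genuinely non-soft ingredient is passing from convergence against $C_c(\mathbb{R})$ to convergence against all of $L^{q'}(\mathbb{R})$, and this is exactly where reflexivity and the uniform bound $\|g_k\|_q \le C$ do the work: a bounded sequence in a reflexive space whose weak-$*$ accumulation points (tested only against the dense-enough-for-measures class $C_c$) are all equal must converge weakly in the full dual pairing. I would present this as the density argument sketched above rather than invoking Eberlein--\v{S}mulian by name. The main (minor) obstacle is simply being careful that $C_c(\mathbb{R})$, while not dense in $L^{q'}(\mathbb{R})$ when $q' = \infty$, \emph{is} dense for $1 < q' < \infty$, so the identification of the weak limit is unambiguous; no issue arises since $q > 1$ guarantees $q' < \infty$. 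All other steps — the uniform bound (hypothesis), reflexivity of $L^q$, pointwise convergence $P_{z_k}w \to P_{z_0}w$, and bounded convergence — are routine.
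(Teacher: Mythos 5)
Your proposal is correct and is essentially identical to the paper's proof: both extract a weakly convergent subsequence from any subsequence using the uniform $L^{q}$ bound and reflexivity, identify the weak limit as $\frac{dP_{z}\nu}{d\mathcal{L}}$ by testing against $C_{c}(\mathbb{R})$ via bounded convergence, and conclude by the subsequence-of-subsequences principle. The only cosmetic difference is your explicit aside about density of $C_{c}(\mathbb{R})$ in $L^{q'}(\mathbb{R})$, which the paper does not need to mention since agreement against $C_{c}$ already forces the two locally integrable densities to coincide a.e.
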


\begin{proof}
By assumption for every $z\in S$ there exists $g_{z}\in L^{q}(\mathbb{R})$
with $dP_{z}\nu=g_{z}d\mathcal{L}$. Moreover,
\begin{equation}
\Vert g_{z}\Vert_{q}\le C\text{ for all }z\in S\:.\label{eq:uniform norm bound}
\end{equation}
Let $z\in S$ and $\{z_{0,k}\}_{k\ge1}\subset S$ be with $z_{0,k}\overset{k}{\rightarrow}z$.
It suffices to show that
\begin{equation}
g_{z_{0,k}}\overset{k}{\rightarrow}g_{z}\text{ weakly in }L^{q}(\mathbb{R})\text{ as }k\rightarrow\infty\:.\label{eq:weak convergence}
\end{equation}

Let $\{z_{1,k}\}_{k\ge1}$ be a subsequence of $\{z_{0,k}\}_{k\ge1}$.
From (\ref{eq:uniform norm bound}) and since $L^{q}(\mathbb{R})$
is reflexive, it follows that there exist a subsequence $\{z_{2,k}\}_{k\ge1}$
of $\{z_{1,k}\}_{k\ge1}$ and $g\in L^{q}(\mathbb{R})$ such that
$g_{z_{2,k}}\overset{k}{\rightarrow}g$ weakly in $L^{q}(\mathbb{R})$
as $k\rightarrow\infty$.

Since $z_{2,k}\overset{k}{\rightarrow}z$, it holds for each $h\in C_{c}(\mathbb{R})$
that
\begin{multline*}
\int h(x)g(x)\:dx=\underset{k}{\lim}\:\int h(x)g_{z_{2,k}}(x)\:dx\\
=\underset{k}{\lim}\:\int h(P_{z_{2,k}}w)\:d\nu(w)=\int h(P_{z}w)\:d\nu(w)=\int h(x)g_{z}(x)\:dx\:.
\end{multline*}
Hence $g=g_{z}$, and so $g_{z_{2,k}}\overset{k}{\rightarrow}g_{z}$
weakly as $k\rightarrow\infty$. Since $\{z_{1,k}\}_{k\ge1}$ was
an arbitrary subsequence of $\{z_{0,k}\}_{k\ge1}$ this gives (\ref{eq:weak convergence})
and completes the proof of the lemma.
\end{proof}
\begin{lem}
\label{lem:norm continuity}Let $\nu\in\mathcal{M}(\mathbb{C})$ and
$\gamma,C>0$. Assume that $P_{z}\nu\in H_{\gamma}(\mathbb{R})$,
with $\Vert P_{z}\nu\Vert_{(\gamma)}\le C$, for all $z\in S$. Let
$\gamma'\in(0,\gamma)$, then $z\rightarrow P_{z}\nu$ is continuous
as a map from $S$ to $\left(H_{\gamma'}(\mathbb{R}),\Vert\cdot\Vert_{(\gamma')}\right)$.
\end{lem}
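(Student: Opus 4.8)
The plan is to reduce norm continuity in $H_{\gamma'}$ to a compactness-plus-uniqueness argument, exactly as in the proof of Lemma \ref{lem:weak continuity}, but upgrading weak convergence in $L^{2}$ to norm convergence in the slightly smaller Sobolev space by exploiting the uniform bound in the larger one. Fix $z\in S$ and a sequence $z_{k}\to z$; write $g_{z_{k}}=\frac{dP_{z_{k}}\nu}{d\mathcal{L}}$ and $g_{z}=\frac{dP_{z}\nu}{d\mathcal{L}}$. It suffices to show every subsequence has a further subsequence along which $g_{z_{k}}\to g_{z}$ in $\Vert\cdot\Vert_{(\gamma')}$. Since $\{g_{z_{k}}\}$ is bounded in the Hilbert space $H_{\gamma}(\mathbb{R})$, pass to a subsequence (without relabelling) along which $g_{z_{k}}\rightharpoonup g$ weakly in $H_{\gamma}(\mathbb{R})$ for some $g$ with $\Vert g\Vert_{(\gamma)}\le C$. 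Testing against $h\in C_{c}(\mathbb{R})$ and using bounded convergence together with $P_{z_{k}}w\to P_{z}w$ (as in Lemma \ref{lem:weak continuity}), one identifies $g=g_{z}$ a.e.

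The heart of the matter is then to promote this weak-$H_{\gamma}$ convergence to strong $H_{\gamma'}$ convergence. For this I would argue directly on the Fourier side. Weak convergence in $H_{\gamma}(\mathbb{R})$ implies $\widehat{g_{z_{k}}}\to\widehat{g_{z}}$ pointwise a.e. (indeed, $\widehat{g_{z_{k}}}(\xi)=\widehat{P_{z_{k}}\nu}(\xi)=\widehat{\nu}(\xi z_{k})\to\widehat{\nu}(\xi z)=\widehat{g_{z}}(\xi)$ for every $\xi\in\mathbb{R}$, directly from continuity of $\widehat{\nu}$), so that
\[
|\widehat{g_{z_{k}}}(\xi)-\widehat{g_{z}}(\xi)|^{2}(1+|\xi|^{2})^{\gamma'}\overset{k}{\to}0\text{ for a.e. }\xi\:.
\]
To pass this to the integral I would split $\mathbb{R}=\{|\xi|\le R\}\cup\{|\xi|>R\}$. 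On the tail $\{|\xi|>R\}$, the factor $(1+|\xi|^{2})^{\gamma'}$ is dominated by $(1+R^{2})^{\gamma'-\gamma}(1+|\xi|^{2})^{\gamma}$, and since
\[
\int_{|\xi|>R}|\widehat{g_{z_{k}}}(\xi)-\widehat{g_{z}}(\xi)|^{2}(1+|\xi|^{2})^{\gamma}\,d\xi\le\left(\Vert g_{z_{k}}\Vert_{(\gamma)}+\Vert g_{z}\Vert_{(\gamma)}\right)^{2}\le 4C^{2}
\]
is uniformly bounded, the tail contributes at most $4C^{2}(1+R^{2})^{\gamma'-\gamma}$, which is small uniformly in $k$ once $R$ is large (here $\gamma'-\gamma<0$ is exactly what is used). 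On the fixed compact piece $\{|\xi|\le R\}$ the integrand tends to $0$ a.e. and is dominated by the integrable function $\xi\mapsto 4C^{2}(1+R^{2})^{\gamma}\cdot\mathbf{1}_{[-R,R]}(\xi)$ after noting $|\widehat{g_{z_{k}}}|,|\widehat{g_{z}}|\le 1$ (Fourier transforms of probability measures), so dominated convergence gives that this part $\to 0$. Combining, $\limsup_{k}\Vert g_{z_{k}}-g_{z}\Vert_{(\gamma')}^{2}\le 4C^{2}(1+R^{2})^{\gamma'-\gamma}$ for every $R$, hence equals $0$.

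Since the original subsequence was arbitrary and every subsequence thus has a further subsequence converging to the same limit $g_{z}$ in $\Vert\cdot\Vert_{(\gamma')}$, the whole sequence $g_{z_{k}}\to g_{z}$ in $H_{\gamma'}(\mathbb{R})$, proving the lemma. The main obstacle is conceptual rather than technical: weak convergence in a Sobolev space does not by itself give norm convergence, and the device that rescues this is the strict loss of derivatives $\gamma'<\gamma$, which trades the uniform bound in $H_{\gamma}$ for decay of the high-frequency tail; the low-frequency part is then handled by a routine dominated-convergence argument using that all the densities are Fourier transforms of probability measures and hence uniformly bounded by $1$.
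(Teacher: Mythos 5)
Your proof is correct, but it takes a genuinely different route from the paper's. The paper extracts a subsequence via the uniform bound in $H_{\gamma}(\mathbb{R})$ together with the fact that all the densities are supported in a common bounded interval, and then invokes Rellich's theorem (the compact embedding $H_{\gamma}\hookrightarrow H_{\gamma'}$ on a fixed compact set) to get norm convergence in $H_{\gamma'}(\mathbb{R})$ along that subsequence; the limit is identified with $g_{z}$ exactly as in Lemma \ref{lem:weak continuity}. You instead prove the needed compactness statement by hand on the Fourier side: the identity $\widehat{P_{z}\nu}(\xi)=\widehat{\nu}(\xi z)$ gives pointwise convergence $\widehat{g_{z_{k}}}(\xi)\to\widehat{g_{z}}(\xi)$ for \emph{every} $\xi$ directly from continuity of $\widehat{\nu}$, the high frequencies $|\xi|>R$ are killed uniformly in $k$ by trading $(1+|\xi|^{2})^{\gamma'}\le(1+R^{2})^{\gamma'-\gamma}(1+|\xi|^{2})^{\gamma}$ against the uniform $H_{\gamma}$ bound, and the low frequencies are handled by dominated convergence using $|\widehat{g_{z_{k}}}|\le1$. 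This buys you two things: the argument is more elementary (no compactness theorem, and in fact no use of the common compact support of the projections beyond $\nu$ being a finite measure), and it is more direct — since the pointwise Fourier convergence holds for the full sequence and the limit is manifestly $\widehat{g_{z}}$, your opening subsequence extraction and weak-limit identification are actually redundant and could be deleted. Two cosmetic remarks: the dominating function on $[-R,R]$ should be $4(1+R^{2})^{\gamma'}\mathbf{1}_{[-R,R]}$ rather than $4C^{2}(1+R^{2})^{\gamma}\mathbf{1}_{[-R,R]}$ (harmless, since any integrable majorant works), and one should note that $g_{z}\in L^{1}(\mathbb{R})$ so its $L^{2}$-Fourier transform agrees a.e.\ with the continuous function $\widehat{P_{z}\nu}$; neither affects the validity of the proof.
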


\begin{proof}
By assumption for every $z\in S$ there exists $g_{z}\in H_{\gamma}(\mathbb{R})$
with $dP_{z}\nu=g_{z}d\mathcal{L}$. Moreover,
\begin{equation}
\Vert g_{z}\Vert_{(\gamma)}\le C\text{ for all }z\in S\:.\label{eq:uniform frac norm bound}
\end{equation}
Let $z\in S$ and $\{z_{0,k}\}_{k\ge1}\subset S$ be with $z_{0,k}\overset{k}{\rightarrow}z$.
It suffices to show that
\begin{equation}
\Vert g_{z}-g_{z_{0,k}}\Vert_{(\gamma')}\overset{k}{\rightarrow}0\text{ as }k\rightarrow\infty\:.\label{eq:convergence in norm}
\end{equation}

Let $\{z_{1,k}\}_{k\ge1}$ be a subsequence of $\{z_{0,k}\}_{k\ge1}$.
There exists a bounded interval $J\subset\mathbb{R}$ with
\[
supp(P_{z}\nu)\subset J\text{ for all }z\in S\:.
\]
From this, from (\ref{eq:uniform frac norm bound}), and by Rellich's
Theorem (see for instance \cite[Theorem 9.22]{F2}), there exist a
subsequence $\{z_{2,k}\}_{k\ge1}$ of $\{z_{1,k}\}_{k\ge1}$ and $g\in H_{\gamma'}(\mathbb{R})$
such that $\Vert g-g_{z_{2,k}}\Vert_{(\gamma')}\overset{k}{\rightarrow}0$.

As in the proof of the previous lemma, it can be shown that $g=g_{z}$
and so $\Vert g_{z}-g_{z_{2,k}}\Vert_{(\gamma')}\overset{k}{\rightarrow}0$.
Since $\{z_{1,k}\}_{k\ge1}$ was an arbitrary subsequence of $\{z_{0,k}\}_{k\ge1}$
this gives (\ref{eq:convergence in norm}) and completes the proof
of the lemma.
\end{proof}
\begin{proof}[Proof of part (\ref{enu:weak contin}) of Theorems \ref{thm:main_thm}
and \ref{thm:continuous frac deriv}]
Let $\lambda\in\mathbb{D}\setminus\mathcal{E}$ be with $\arg\lambda\notin\pi\mathbb{Q}$,
$\mathbf{a}\in\mathbb{C}^{\Lambda}$ such that $\mathcal{F}_{\lambda,\mathbf{a}}$
satisfies the SSC, and $p\in\mathbb{P}_{\Lambda}$ with $D_{q}(\nu_{\lambda,\mathbf{a}}^{p})>1$
for some $1<q<\infty$. Set $\nu=\nu_{\lambda,\mathbf{a}}^{p}$, then
by part (\ref{enu:L^q in each dir}) of Proposition \ref{prop:ac in each dir}
it follows that $P_{z}\nu\in L^{q}(\mathbb{R})$ for all $z\in S$.
From this and Proposition \ref{prop:bounded norms} we get that $\{P_{z}\nu\}_{z\in S}$
is a bounded subset of $(L^{q}(\mathbb{R}),\Vert\cdot\Vert_{q})$.
By Lemma \ref{lem:weak continuity} it now follows that $z\rightarrow P_{z}\nu$
is continuous as a map from $S$ to $L^{q}(\mathbb{R})$, with respect
to the weak topology of $L^{q}(\mathbb{R})$, which completes the
proof of part (\ref{enu:weak contin}) of Theorem \ref{thm:main_thm}.

Part (\ref{enu:cont of proj}) of Theorem \ref{thm:continuous frac deriv}
follows in a similar manner, except that one needs to use part (\ref{enu:H_gamma in each dir})
of Proposition \ref{prop:ac in each dir} and Lemma \ref{lem:norm continuity}.
\end{proof}

\section{\label{sec:Proof-of-dc}Proof of part (\ref{enu:dc for measues})
of Theorem \ref{thm:main_thm}}

Throughout this section fix $\alpha\in S$, $0<r<1$, $\Lambda$ a
finite nonempty index set, $\mathbf{a}\in\mathbb{C}^{\Lambda}$, and
$p\in\mathbb{P}_{\Lambda}$. Write $\lambda=r\alpha$, $K=K_{\lambda,\mathbf{a}}$,
$\nu=\nu_{\lambda,\mathbf{a}}^{p}$, and $f_{i}=f_{\lambda,\mathbf{a}}^{i}$
for every $i\in\Lambda$. Assume that $\mathcal{F}_{\lambda,\mathbf{a}}$
satisfies the SSC. Assume further that $P_{z}\nu$ is absolutely continuous
for each $z\in S$ and write $g_{z}=\frac{dP_{z}\nu}{d\mathcal{L}}$.

Given $i_{1}...i_{k}=\boldsymbol{i}\in\Lambda^{*}$ recall that
\[
f_{\boldsymbol{i}}=f_{i_{1}}\circ...\circ f_{i_{k}}\text{ and }p_{\boldsymbol{i}}=p_{i_{1}}\cdot...\cdot p_{i_{k}},
\]
and set $K_{\boldsymbol{i}}=f_{\boldsymbol{i}}(K)$. For $w\in K$
and $k\ge1$ denote by $\boldsymbol{i}_{k}(w)$ the unique word of
length $k$ over $\Lambda$ with $w\in K_{\boldsymbol{i}_{k}(w)}$.
Let $T:K\rightarrow K$ be with $Tw=f_{\boldsymbol{i}_{1}(w)}^{-1}(w)$
for $w\in K$. It is easy to verify that the system $(K,T,\nu)$ is
measure preserving and ergodic.
\begin{lem}
\label{lem:slices by densities}Let $z\in S$ and $k\ge1$, then for
$\nu$-a.e. $w\in K$
\[
\nu_{z,w}(K_{\boldsymbol{i}_{k}(w)})=\frac{g_{\alpha^{-k}z}(P_{\alpha^{-k}z}T^{k}w)}{g_{z}(P_{z}w)}\cdot p_{\boldsymbol{i}_{k}(w)}r^{-k}\:.
\]
\end{lem}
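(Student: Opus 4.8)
The statement is about the conditional measure $\nu_{z,w}$ evaluated on the cylinder $K_{\boldsymbol{i}_k(w)}$, so the natural route is to compute, for a test function depending only on $P_z$, the conditional expectation of the indicator of a fixed cylinder $K_{\boldsymbol{j}}$ (with $\boldsymbol{j}\in\Lambda^k$) and then identify this with the ratio of densities. Fix $\boldsymbol{j}\in\Lambda^k$ and let $b_{\boldsymbol{j}}=P_z f_{\boldsymbol{j}}(0)$ as in Lemma \ref{lem:SS of densities}, so that $P_z f_{\boldsymbol{j}} = \tau_{-b_{\boldsymbol{j}}}\circ M_{r^k}\circ P_{\alpha^{-k}z}$ on $\mathbb{C}$. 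The first step is to write, for $h\in C_c(\mathbb{R})$,
\[
\int_{K} h(P_z w)\, 1_{K_{\boldsymbol{j}}}(w)\, d\nu(w) = p_{\boldsymbol{j}}\int_{K} h(P_z f_{\boldsymbol{j}} w)\, d\nu(w),
\]
using $\nu|_{K_{\boldsymbol{j}}} = p_{\boldsymbol{j}}\, f_{\boldsymbol{j}}\nu$ (a consequence of the SSC). Then, exactly as in the computation inside the proof of Lemma \ref{lem:SS of densities}, this equals
\[
p_{\boldsymbol{j}}\int h(b_{\boldsymbol{j}} + r^k x)\, g_{\alpha^{-k}z}(x)\, dx = p_{\boldsymbol{j}} r^{-k}\int h(x)\, g_{\alpha^{-k}z}\!\left(r^{-k}x - r^{-k}b_{\boldsymbol{j}}\right) dx.
\]

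**Identifying the conditional expectation.** Since $g_z(x) = \sum_{\boldsymbol{i}\in\Lambda^k} p_{\boldsymbol{i}} r^{-k}\, g_{\alpha^{-k}z}(r^{-k}x - r^{-k}b_{\boldsymbol{i}})$ by Lemma \ref{lem:SS of densities}, the displayed integral shows that the density of $P_z(\nu|_{K_{\boldsymbol{j}}})$ with respect to $P_z\nu$ is
\[
w\ \longmapsto\ \frac{p_{\boldsymbol{j}} r^{-k}\, g_{\alpha^{-k}z}\!\left(r^{-k}P_z w - r^{-k}b_{\boldsymbol{j}}\right)}{g_z(P_z w)}
\]
for $P_z\nu$-a.e.\ point (interpreting the quotient as $0$ where $g_z(P_z w)=0$, which is a $\nu$-null event). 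By the defining property of the disintegration $\{\nu_{z,w}\}$ — namely $\int f\, d\nu_{z,w} = E_\nu(f\mid P_z^{-1}(\mathcal{B}))(w)$ for bounded $\mathcal{B}$-measurable $f$ — applied to $f = 1_{K_{\boldsymbol{j}}}$, this ratio is a version of $w\mapsto \nu_{z,w}(K_{\boldsymbol{j}})$. The final step is to observe that the argument $r^{-k}P_z w - r^{-k}b_{\boldsymbol{j}}$ equals $P_{\alpha^{-k}z} f_{\boldsymbol{j}}^{-1}(w)$: indeed, inverting $P_z f_{\boldsymbol{j}} = \tau_{-b_{\boldsymbol{j}}}\circ M_{r^k}\circ P_{\alpha^{-k}z}$ gives $P_{\alpha^{-k}z}\circ f_{\boldsymbol{j}}^{-1} = M_{r^{-k}}\circ \tau_{r^{-k}b_{\boldsymbol{j}}}\circ P_z$... wait — one must check the composition carefully, but this is the routine identity $P_{\alpha^{-k}z}f_{\boldsymbol{j}}^{-1}(w) = r^{-k}(P_z w - b_{\boldsymbol{j}})$. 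Now specialize $\boldsymbol{j} = \boldsymbol{i}_k(w)$: on the event $\{w : \boldsymbol{i}_k(w) = \boldsymbol{j}\}$ we have $f_{\boldsymbol{j}}^{-1}(w) = T^k w$ (by definition of $T$ and $\boldsymbol{i}_k$), and since $\{K_{\boldsymbol{j}}\}_{\boldsymbol{j}\in\Lambda^k}$ partitions $K$ mod $\nu$-null sets, summing (or rather, patching) over $\boldsymbol{j}\in\Lambda^k$ yields the claimed formula
\[
\nu_{z,w}(K_{\boldsymbol{i}_k(w)}) = \frac{g_{\alpha^{-k}z}(P_{\alpha^{-k}z}T^k w)}{g_z(P_z w)}\cdot p_{\boldsymbol{i}_k(w)} r^{-k}\quad\text{for }\nu\text{-a.e. }w.
\]

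**Main obstacle.** The computations are all of the bookkeeping variety; the only genuinely delicate points are (i) justifying that $g_z(P_z w) > 0$ for $\nu$-a.e.\ $w$ — this follows because $\int_{\{g_z = 0\}} dP_z\nu = \int_{\{g_z=0\}} g_z\, d\mathcal{L} = 0$ and $P_z^{-1}\{g_z = 0\}$ is therefore $\nu$-null, so the quotient is well-defined a.e.; and (ii) making the substitution $\boldsymbol{j} = \boldsymbol{i}_k(w)$ rigorous, i.e.\ checking that a family of identities, one for each fixed cylinder, combines into a single a.e.\ identity where the cylinder varies with the point. For (ii), the clean way is: for each $\boldsymbol{j}$ the formula holds for $\nu$-a.e.\ $w$; restricting the exceptional null set to $K_{\boldsymbol{j}}$ and taking the union over the finitely many $\boldsymbol{j}\in\Lambda^k$ gives a single $\nu$-null set outside of which, for every $w$, the formula with $\boldsymbol{j} = \boldsymbol{i}_k(w)$ holds. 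I expect this to be the part requiring the most care to state precisely, though it is conceptually straightforward.
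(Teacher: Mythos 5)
Your argument is correct, but it takes a genuinely different route from the paper. The paper's proof is pointwise: it invokes Lemma 3.3 of [FH] to write $\nu_{z,w}(A)$ as the a.e.\ limit $\lim_{\delta\downarrow0}\nu(P_z^{-1}(B(P_zw,\delta))\cap A)/\nu(P_z^{-1}(B(P_zw,\delta)))$, invokes the Lebesgue differentiation theorem to write $g_\eta(P_\eta w)$ as $\lim_{\delta\downarrow0}P_\eta\nu(B(P_\eta w,\delta))/(2\delta)$, and then performs a geometric computation identifying the tube $P_z^{-1}(B(P_zw,\delta))$ with $f_{\boldsymbol{i}_k(w)}$ applied to a rescaled tube, using self-similarity and the SSC to isolate the single surviving cylinder. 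You instead work with the defining duality property of the disintegration: you compute $\int h(P_zw)1_{K_{\boldsymbol{j}}}\,d\nu$ via $\nu|_{K_{\boldsymbol{j}}}=p_{\boldsymbol{j}}f_{\boldsymbol{j}}\nu$ and the change of variables already carried out in Lemma \ref{lem:SS of densities}, read off $E_\nu(1_{K_{\boldsymbol{j}}}\mid P_z^{-1}(\mathcal{B}))$ as a Radon--Nikodym derivative $dP_z(\nu|_{K_{\boldsymbol{j}}})/dP_z\nu$, and then patch over the finitely many cylinders. Your route is more self-contained (it avoids both external citations and reuses the computation of Lemma \ref{lem:SS of densities} verbatim), at the cost of the small measure-theoretic housekeeping you correctly identify: extending the test-function identity from $C_c(\mathbb{R})$ to bounded Borel functions, the $\nu$-a.e.\ positivity of $g_z\circ P_z$, and the cylinder-by-cylinder patching. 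The one slip — the order of composition in $M_{r^{-k}}\circ\tau_{r^{-k}b_{\boldsymbol{j}}}\circ P_z$ — is harmless since the identity you actually use, $P_{\alpha^{-k}z}f_{\boldsymbol{j}}^{-1}(w)=r^{-k}(P_zw-b_{\boldsymbol{j}})$, is the correct one and matches $\tau_{r^{-k}b_{\boldsymbol{j}}}M_{r^{-k}}$ in the paper's notation.
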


\begin{proof}
From Lemma 3.3 in \cite{FH} we get that for each $\eta\in S$ and
Borel set $A\subset\mathbb{C}$,
\begin{equation}
\nu_{\eta,w}(A)=\underset{\delta\downarrow0}{\lim}\:\frac{\nu(P_{\eta}^{-1}(B(P_{\eta}w,\delta))\cap A)}{\nu(P_{\eta}^{-1}(B(P_{\eta}w,\delta)))}\text{ for \ensuremath{\nu}-a.e. \ensuremath{w\in\mathbb{C}}\:.}\label{eq:slices as limit}
\end{equation}
By Theorem 2.12 in \cite{M} it follows that for each $\eta\in S$,
\begin{equation}
g_{\eta}(P_{\eta}w)=\underset{\delta\downarrow0}{\lim}\:\frac{P_{\eta}\nu(B(P_{\eta}w,\delta))}{2\delta}\text{ for \ensuremath{\nu}-a.e. \ensuremath{w\in\mathbb{C}}\:.}\label{eq:densities as limit}
\end{equation}
From (\ref{eq:slices as limit}) and (\ref{eq:densities as limit})
we get that for $\nu$-a.e. $w\in K$,
\begin{eqnarray}
\nu_{z,w}(K_{\boldsymbol{i}_{k}(w)}) & = & \underset{\delta\downarrow0}{\lim}\:\frac{\nu(K_{\boldsymbol{i}_{k}(w)}\cap P_{z}^{-1}(B(P_{z}w,\delta)))}{\nu(P_{z}^{-1}(B(P_{z}w,\delta)))}\nonumber \\
 & = & \underset{\delta\downarrow0}{\lim}\:\frac{2\delta}{\nu(P_{z}^{-1}(B(P_{z}w,\delta)))}\cdot\frac{\nu(K_{\boldsymbol{i}_{k}(w)}\cap P_{z}^{-1}(B(P_{z}w,\delta)))}{2\delta}\nonumber \\
 & = & g_{z}(P_{z}w)^{-1}\underset{\delta\downarrow0}{\lim}\:\frac{\nu(K_{\boldsymbol{i}_{k}(w)}\cap P_{z}^{-1}(B(P_{z}w,\delta)))}{2\delta}\:.\label{eq:stage 1 of proof}
\end{eqnarray}

For each $\delta>0$ set
\[
F_{\delta}=P_{\alpha^{-k}z}^{-1}(B(P_{\alpha^{-k}z}f_{\boldsymbol{i}_{k}(w)}^{-1}(w),\delta r^{-k}))
\]
and for $\eta\in S$ write $\eta^{\perp}=e^{i\pi/2}\eta$. We have,
\begin{eqnarray*}
P_{z}^{-1}(B(P_{z}w,\delta)) & = & w+z^{\perp}\mathbb{R}+B(0,\delta)\\
 & = & f_{\boldsymbol{i}_{k}(w)}\circ f_{\boldsymbol{i}_{k}(w)}^{-1}(w+z^{\perp}\mathbb{R}+B(0,\delta))\\
 & = & f_{\boldsymbol{i}_{k}(w)}(f_{\boldsymbol{i}_{k}(w)}^{-1}(w)+(\alpha^{-k}z)^{\perp}\mathbb{R}+B(0,\delta r^{-k}))\\
 & = & f_{\boldsymbol{i}_{k}(w)}(F_{\delta})\:.
\end{eqnarray*}
From this, from (\ref{eq:stage 1 of proof}) and since $\nu$ is self-similar,
\begin{eqnarray}
\nu_{z,w}(K_{\boldsymbol{i}_{k}(w)}) & = & g_{z}(P_{z}w)^{-1}\underset{\delta\downarrow0}{\lim}\:\frac{\nu(K_{\boldsymbol{i}_{k}(w)}\cap f_{\boldsymbol{i}_{k}(w)}(F_{\delta}))}{2\delta}\nonumber \\
 & = & g_{z}(P_{z}w)^{-1}\underset{\delta\downarrow0}{\lim}\:\frac{1}{2\delta}\sum_{\boldsymbol{i}\in\Lambda^{k}}p_{\boldsymbol{i}}\cdot\nu(f_{\boldsymbol{i}}^{-1}(f_{\boldsymbol{i}_{k}(w)}(K\cap F_{\delta})))\:.\label{eq:result of ss of nu}
\end{eqnarray}

Since $\mathcal{F}_{\lambda,\mathbf{a}}$ satisfies the SSC,
\[
K\cap f_{\boldsymbol{i}}^{-1}(f_{\boldsymbol{i}_{k}(w)}(K))=\emptyset\text{ for each }\boldsymbol{i}\in\Lambda^{k}\setminus\{\boldsymbol{i}_{k}(w)\}\:.
\]
Hence by (\ref{eq:result of ss of nu}) and (\ref{eq:densities as limit}),
\begin{eqnarray*}
\nu_{z,w}(K_{\boldsymbol{i}_{k}(w)}) & = & g_{z}(P_{z}w)^{-1}\underset{\delta\downarrow0}{\lim}\:p_{\boldsymbol{i}_{k}(w)}r^{-k}\cdot\frac{\nu(K\cap F_{\delta})}{2\delta r^{-k}}\\
 & = & g_{z}(P_{z}w)^{-1}\cdot p_{\boldsymbol{i}_{k}(w)}r^{-k}\cdot g_{\alpha^{-k}z}(P_{\alpha^{-k}z}f_{\boldsymbol{i}_{k}(w)}^{-1}(w))\\
 & = & \frac{g_{\alpha^{-k}z}(P_{\alpha^{-k}z}T^{k}w)}{g_{z}(P_{z}w)}\cdot p_{\boldsymbol{i}_{k}(w)}r^{-k},
\end{eqnarray*}
which completes the proof of the lemma.
\end{proof}
Part (\ref{enu:dc for measues}) of Theorem \ref{thm:main_thm} follows
directly from part (\ref{enu:L^q dens in each dir}) combined with
the following proposition.
\begin{prop}
Let $1<q<\infty$, assume that $g_{z}\in L^{q}(\mathbb{R})$ for all
$z\in S$, and that $\arg\lambda\notin\pi\mathbb{Q}$. Then for every
$z\in S$ the measure $\nu_{z,w}$ has exact dimension $\dim_{H}\nu-1$
for $\nu$-a.e. $w\in\mathbb{C}$.
\end{prop}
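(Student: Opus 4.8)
The plan is to compute the local dimension of the slice measures $\nu_{z,w}$ along the orbit of the ergodic system $(K,T,\nu)$ using Lemma~\ref{lem:slices by densities}, and to control the error terms coming from the densities $g_z$ via the hypothesis $g_z\in L^q(\mathbb{R})$. Fix $z\in S$. The starting point is the observation that for $\nu$-a.e. $w$ the sets $K_{\boldsymbol{i}_k(w)}$ form a convenient neighbourhood basis of $w$ in the sense that $\mathrm{diam}(K_{\boldsymbol{i}_k(w)})$ is comparable to $r^k$, and moreover (by the SSC and the geometry of $P_z$) the slice $P_z^{-1}(P_zw)\cap K$ is covered efficiently by the cylinders $K_{\boldsymbol{j}}$, $\boldsymbol{j}\in\Lambda^k$, that meet it, so that computing $\nu_{z,w}(K_{\boldsymbol{i}_k(w)})$ and taking $\tfrac{\log\nu_{z,w}(K_{\boldsymbol{i}_k(w)})}{\log r^k}$ recovers the exact dimension of $\nu_{z,w}$, provided the limit exists and is $\nu$-a.e. constant. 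By Lemma~\ref{lem:slices by densities},
\[
\frac{\log\nu_{z,w}(K_{\boldsymbol{i}_k(w)})}{-k\log r}
=\frac{\log g_{\alpha^{-k}z}(P_{\alpha^{-k}z}T^kw)-\log g_z(P_zw)+\log p_{\boldsymbol{i}_k(w)}-k\log r}{-k\log r}.
\]
The term $\tfrac{-\log g_z(P_zw)}{-k\log r}\to0$ trivially, and $\tfrac{\log p_{\boldsymbol{i}_k(w)}-k\log r}{-k\log r}\to\tfrac{\sum_i p_i\log p_i}{\log r}-1=\dim_H\nu-1$ for $\nu$-a.e.\ $w$ by the ergodic theorem applied to $T$ (the Shannon--McMillan--Breiman-type statement $\tfrac1k\log p_{\boldsymbol{i}_k(w)}\to\sum_i p_i\log p_i$). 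So everything reduces to showing
\[
\frac{1}{k}\log g_{\alpha^{-k}z}(P_{\alpha^{-k}z}T^kw)\xrightarrow[k\to\infty]{}0\quad\text{for }\nu\text{-a.e. }w.
\]

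The hard part will be exactly this last estimate: the function appearing here changes with $k$ (the direction $\alpha^{-k}z$ moves around $S$), so it is not a straight application of Birkhoff's theorem. Here is where $\arg\lambda\notin\pi\mathbb Q$ and $g_z\in L^q$ enter. First I would reduce to a uniform integrability statement: since $\{g_z\}_{z\in S}$ is bounded in $L^q$ — this follows from Proposition~\ref{prop:bounded norms} applied with $B=L^q(\mathbb{R})$, given that $g_z\in L^q$ for all $z$ — and since by Lemma~\ref{lem:SS of densities} (or directly by self-similarity) the pushforward of $\nu$ under $w\mapsto P_{\alpha^{-k}z}T^kw$ is exactly $P_{\alpha^{-k}z}\nu=g_{\alpha^{-k}z}\,d\mathcal L$ restricted to a fixed bounded interval $J$, we get that $\int |\log g_{\alpha^{-k}z}(P_{\alpha^{-k}z}T^kw)|\,d\nu(w)=\int_J |\log g_{\alpha^{-k}z}|\,g_{\alpha^{-k}z}\,d\mathcal L$, and the family of functions $\{g_z\log^\pm g_z\}_{z\in S}$ is uniformly integrable on $J$ (the $L^q$ bound kills the large-value contribution via $t(\log t)\le C_q t^q$ for $t$ large, and $|t\log t|$ is bounded near $0$). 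Hence $\int |\log g_{\alpha^{-k}z}(P_{\alpha^{-k}z}T^kw)|\,d\nu(w)\le M$ uniformly in $k$. By Markov's inequality and Borel--Cantelli along a sufficiently sparse subsequence $k_j$ (e.g.\ $k_j=j^2$), the quantity $\tfrac1{k_j}\log g_{\alpha^{-k_j}z}(P_{\alpha^{-k_j}z}T^{k_j}w)\to0$ for $\nu$-a.e.\ $w$; then I would fill in the gaps between consecutive $k_j$'s using monotonicity-type control of $\nu_{z,w}(K_{\boldsymbol{i}_k(w)})$ in $k$ — the cylinders are nested, so $\nu_{z,w}(K_{\boldsymbol{i}_{k+1}(w)})\le\nu_{z,w}(K_{\boldsymbol{i}_k(w)})$ — which sandwiches $\tfrac{\log\nu_{z,w}(K_{\boldsymbol{i}_k(w)})}{-k\log r}$ between values at neighbouring subsequence points, both of which converge to $\dim_H\nu-1$.

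Finally, to conclude the theorem I would argue that the pointwise limit $\lim_{k}\tfrac{\log\nu_{z,w}(K_{\boldsymbol{i}_k(w)})}{-k\log r}=\dim_H\nu-1$ for $\nu$-a.e.\ $w$ upgrades to the genuine exact-dimensionality statement $\lim_{\delta\downarrow0}\tfrac{\log\nu_{z,w}(B(w,\delta))}{\log\delta}=\dim_H\nu-1$. The upper bound on the local dimension is immediate since $B(w,c r^k)\supset K_{\boldsymbol{i}_k(w)}$ for an appropriate constant $c$. For the lower bound one uses the SSC: a ball $B(w,\delta)\cap P_z^{-1}(P_zw)$ meets only boundedly many cylinders $K_{\boldsymbol{j}}$ of generation $k$ when $r^{k+1}\le\delta<r^k$, and each such cylinder has $\nu_{z,w}$-measure comparable — by another application of Lemma~\ref{lem:slices by densities} and the uniform bounds on the densities — to $\nu_{z,w}(K_{\boldsymbol{i}_k(w)})$ up to subexponential factors, so $\nu_{z,w}(B(w,\delta))$ is within a subexponential factor of $\nu_{z,w}(K_{\boldsymbol{i}_k(w)})$. (One should be slightly careful that the bad sets — where $g_{\alpha^{-k}z}$ is near $0$ or $\infty$ — are negligible; this is again handled by the uniform $L^q$ bound and Borel--Cantelli.) Since the statement must hold for every $z\in S$, I note that the exceptional $\nu$-null set depends on $z$, which is allowed by the formulation "for $\nu$-a.e. $w$"; no uniformity in $z$ is needed here. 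That completes the argument.
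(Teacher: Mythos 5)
Your proposal is correct and follows essentially the same route as the paper: Lemma \ref{lem:slices by densities} combined with Proposition \ref{prop:bounded norms}, the Shannon--McMillan--Breiman theorem for the $p_{\boldsymbol{i}_k(w)}$ term, and a Markov/Borel--Cantelli argument (using that $T$ preserves $\nu$ and that $P_{z_k}T^k\nu=g_{z_k}\,d\mathcal{L}$) to show $\frac1k\log g_{\alpha^{-k}z}(P_{\alpha^{-k}z}T^kw)\to0$. The only difference is technical: the paper applies Markov directly with the summable thresholds $k^{2/(q-1)}$ and $k^{-2}$, avoiding your detour through uniform integrability of $g\log g$, a sparse subsequence, and the nested-cylinder sandwich, but both versions work.
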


\begin{proof}
By Proposition \ref{prop:bounded norms} there exists $C>0$ with
$\Vert g_{z}\Vert_{q}\le C$ for all $z\in S$. Fix $z\in S$, then
since $\mathcal{F}_{\lambda,\mathbf{a}}$ satisfies the SSC it suffices
to show that for $\nu$-a.e. $w\in K$,
\[
\underset{k}{\lim}\frac{\log\nu_{z,w}(K_{\boldsymbol{i}_{k}(w)})}{\log r^{k}}=\dim_{H}\nu-1\:.
\]

For every $k\ge1$ set $z_{k}=\alpha^{-k}z$ and
\[
A_{k}=\{w\in K\::\:g_{z_{k}}(P_{z_{k}}w)\ge k^{2/(q-1)}\}\:.
\]
We have,
\begin{multline*}
\nu(A_{k})=P_{z_{k}}\nu\{x\in\mathbb{R}\::\:\left(g_{z_{k}}(x)\right)^{q-1}\ge k^{2}\}\\
\le\frac{1}{k^{2}}\cdot\int\left(g_{z_{k}}(x)\right)^{q-1}\:dP_{z_{k}}\nu(x)=\frac{\Vert g_{z_{k}}\Vert_{q}^{q}}{k^{2}}\le\frac{C^{q}}{k^{2}}\:.
\end{multline*}
From this and since $(K,T,\nu)$ is measure preserving,
\[
\sum_{k=1}^{\infty}\nu(T^{-k}(A_{k}))=\sum_{k=1}^{\infty}\nu(A_{k})<\infty\:.
\]
Now from the Borel-Cantelli lemma it follows that for $\nu$-a.e.
$w\in K$,
\begin{equation}
g_{z_{k}}(P_{z_{k}}T^{k}w)\le k^{2/(q-1)}\text{ for all large enough \ensuremath{k\ge1}}\:.\label{eq:upper est for dens}
\end{equation}

For $k\ge1$ set
\[
B_{k}=\{w\in K\::\:g_{z_{k}}(P_{z_{k}}w)\le k^{-2}\},
\]
and let $J\subset\mathbb{R}$ be a bounded interval with $\mathrm{supp}(P_{\eta}\nu)\subset J$
for all $\eta\in S$. We have,
\begin{eqnarray*}
\nu(B_{k}) & = & P_{z_{k}}\nu\{x\in\mathbb{R}\::\:g_{z_{k}}(x)\le k^{-2}\}\\
 & = & \int_{J}1_{\{g_{z_{k}}(x)\le k^{-2}\}}g_{z_{k}}(x)\:dx\le\frac{\mathcal{L}(J)}{k^{2}},
\end{eqnarray*}
and so $\sum_{k=1}^{\infty}\nu(T^{-k}(B_{k}))<\infty$. From (\ref{eq:upper est for dens})
and the Borel-Cantelli lemma it now follows that for $\nu$-a.e. $w\in K$,
\begin{equation}
k^{-2}\le g_{z_{k}}(P_{z_{k}}T^{k}w)\le k^{2/(q-1)}\text{ for all large enough \ensuremath{k\ge1}}\:.\label{eq:est for dens}
\end{equation}

Denote by $H(p)$ the entropy of $p$, then $\dim_{H}\nu=\frac{H(p)}{-\log r}$
by (\ref{eq:Haus dim of ssm}). From the Shannon-McMillan-Breiman
theorem, applied to the ergodic system $(K,T,\nu)$ and partition
$\{f_{i}(K)\}_{i\in\Lambda}$, it follows that for $\nu$-a.e. $w\in K$,
\[
\underset{k}{\lim}\:-\frac{\log p_{\boldsymbol{i}_{k}(w)}}{k}=\underset{k}{\lim}\:-\frac{\log\nu(K_{\boldsymbol{i}_{k}(w)})}{k}=H(p)\:.
\]
From This, Lemma \ref{lem:slices by densities}, and (\ref{eq:est for dens}),
we get that for $\nu$-a.e. $w\in K$
\begin{multline*}
\underset{k}{\lim}\frac{\log\nu_{z,w}(K_{\boldsymbol{i}_{k}(w)})}{\log r^{k}}=\underset{k}{\lim}\:\frac{1}{k\log r}\cdot\log\left(\frac{g_{z_{k}}(P_{z_{k}}T^{k}w)}{g_{z}(P_{z}w)}\cdot p_{\boldsymbol{i}_{k}(w)}r^{-k}\right)\\
=\underset{k}{\lim}\frac{1}{k\log r}\cdot\log\left(p_{\boldsymbol{i}_{k}(w)}r^{-k}\right)=\frac{H(p)}{-\log r}-1=\dim_{H}\mu-1,
\end{multline*}
which completes the proof of the proposition.
\end{proof}

\section{\label{sec:Proof-of-Cor for sets}Proof of Corollary \ref{cor:cor from main thm}}
\begin{proof}[Proof of Corollary \ref{cor:cor from main thm}]
Let $\lambda\in\mathbb{D}\setminus\mathcal{E}$ be with $\arg\lambda\notin\pi\mathbb{Q}$,
$\Lambda$ a finite nonempty set, and $\mathbf{a}\in\mathbb{C}^{\Lambda}$
such that $\mathcal{F}_{\lambda,\mathbf{a}}$ satisfies the SSC. Set
$K=K_{\lambda,\mathbf{a}}$, let $s=\dim_{H}K$ and assume $s>1$.
Recall that for $z\in S$ and $x\in\mathbb{R}$ we write $K_{z,x}=K\cap P_{z}^{-1}\{x\}$.

Let $(p_{i})_{i\in\Lambda}=p\in\mathbb{P}_{\Lambda}$ be with $p_{i}=|\Lambda|^{-1}$
for each $i\in\Lambda$, and set $\nu=\nu_{\lambda,\mathbf{a}}^{p}$.
Note that $\nu=C_{0}\cdot\mathcal{H}^{s}|_{K}$ for some normalizing
constant $0<C_{0}<\infty$. By (\ref{eq:L^q dim of ssm}) and (\ref{eq:Haus dim of sss}),
\[
D_{2}(\nu)=\frac{\log\Vert p\Vert_{2}^{2}}{\log|\lambda|}=\frac{\log|\Lambda|^{-1}}{\log|\lambda|}=\dim_{H}K>1\:.
\]
Hence, by part (\ref{enu:L^q dens in each dir}) of Theorem \ref{thm:main_thm}
and Proposition \ref{prop:bounded norms}, it follows that $\{P_{z}\nu\}_{z\in S}$
is a Bounded subset of $\left(L^{2}(\mathbb{R}),\Vert\cdot\Vert_{2}\right)$.
Let $C>0$ be with $\Vert g_{z}\Vert_{2}\le C$ for all $z\in S$,
where $g_{z}=\frac{dP_{z}\nu}{d\mathcal{L}}$.

Fix $z\in S$ and set
\[
B=\{w\in K\::\:\dim_{H}K_{z,P_{z}w}=s-1\}\:.
\]
By part (\ref{enu:dc for measues}) of Theorem \ref{thm:main_thm},
\[
\dim_{H}\nu_{z,w}=\dim_{H}\nu-1=s-1\text{ for \ensuremath{\nu}-a.e. \ensuremath{w\in K}.}
\]
Additionally, $\nu_{z,w}$ is supported on $K_{z,P_{z}w}$ for $\nu$-a.e.
$w\in K$. This shows that $\dim_{H}K_{z,P_{z}w}\ge s-1$ for $\nu$-a.e.
$w\in K$. From Theorem 5.8 in \cite{Fal} it follows that $\dim_{H}(K_{z,x})\le s-1$
for $\mathcal{L}$-a.e. $x\in\mathbb{R}$. Since $P_{z}\nu\ll\mathcal{L}$
this shows that $\dim_{H}K_{z,P_{z}w}\le s-1$ for $\nu$-a.e. $w\in K$,
which gives $\nu(B)=1$.

Write
\[
A=\{w\in K\::\:\mathcal{P}^{s-1}(K_{z,P_{z}w})>0\},
\]
and recall the map $T:K\rightarrow K$ from the previous section.
Let $\alpha\in S$ and $0<r<1$ be with $\lambda=r\alpha$, and for
each $k\ge1$ set $z_{k}=\alpha^{-k}z$. Fix $n\ge1$, for each $k\ge1$
let
\[
A_{n,k}=\{w\in K\::\:g_{z_{k}}(P_{z_{k}}T^{k}w)\le n\},
\]
and write
\[
A_{n}=\cap_{N\ge1}\cup_{k\ge N}A_{n,k}.
\]
Since $(K,T,\nu)$ is measure preserving,
\begin{eqnarray*}
\nu(K\setminus A_{n,k}) & = & P_{z_{k}}\nu\{x\in\mathbb{R}\::\:g_{z_{k}}(x)>n\}\\
 & \le & \frac{1}{n}\int g_{z_{k}}(x)\:dP_{z_{k}}\nu(x)=\frac{\Vert g_{z_{k}}\Vert_{2}^{2}}{n}\le\frac{C^{2}}{n}\:.
\end{eqnarray*}
Hence,
\[
\nu(A_{n})=\underset{N}{\lim}\:\nu(\cup_{k\ge N}A_{n,k})\ge\underset{N}{\liminf}\:\nu(A_{n,N})\ge1-\frac{C^{2}}{n}\:.
\]
Write
\[
D=\{w\in K\::\:\nu_{z,w}(A_{n})>0\},
\]
then $\nu(D)\ge1-\frac{C^{2}}{n}$.

For $k\ge1$ and $w\in K$ let $K_{\boldsymbol{i}_{k}(w)}$ be as
defined in the previous section. From Lemma \ref{lem:slices by densities}
we get that for $\nu$-a.e. $w\in K$ and $\nu_{z,w}$-a.e. $\eta\in A_{n}$,
\begin{eqnarray*}
\underset{k\rightarrow\infty}{\liminf}\:\frac{\nu_{z,w}(K_{\boldsymbol{i}_{k}(\eta)})}{|\lambda|^{k(s-1)}} & = & \underset{k\rightarrow\infty}{\liminf}\:\frac{\nu_{z,\eta}(K_{\boldsymbol{i}_{k}(\eta)})}{|\lambda|^{k(s-1)}}\\
 & = & \underset{k\rightarrow\infty}{\liminf}\:\frac{g_{z_{k}}(P_{z_{k}}T^{k}\eta)\cdot|\Lambda|^{-k}|\lambda|^{-k}}{g_{z}(P_{z}\eta)|\lambda|^{k(s-1)}}\\
 & \le & \frac{n}{g_{z}(P_{z}\eta)}\underset{k\rightarrow\infty}{\lim}\:\frac{|\Lambda|^{-k}}{|\lambda|^{ks}}=\frac{n}{g_{z}(P_{z}w)},
\end{eqnarray*}
where in the last equality we have used $s=\frac{\log|\Lambda|}{-\log|\lambda|}$.
Now since $\mathcal{F}_{\lambda,\mathbf{a}}$ satisfies the SSC, we
get that for $\nu$-a.e. $w\in K$ there exists $0<C_{w,n}<\infty$
with,
\[
\underset{\delta\downarrow0}{\liminf}\:\frac{\nu_{z,w}(B(\eta,\delta))}{(2\delta)^{s-1}}\le C_{w,n}\text{ for }\nu_{z,w}\text{-a.e. }\eta\in A_{n}\:.
\]
From this and by Theorem 6.11 in \cite{M} it follows that for $\nu$-a.e.
$w\in D$,
\[
\mathcal{P}^{s-1}(K_{z,P_{z}w})\ge C_{w,n}^{-1}\cdot\nu_{z,w}(A_{n})>0,
\]
and so $w\in A$. This gives $\nu(A)\ge\nu(D)\ge1-\frac{C^{2}}{n}$,
which shows $\nu(A)=1$ since $n\ge1$ can be taken to be arbitrarily
large. We have thus shown $\nu(A\cap B)=1$. From this and since the
measures $\nu$ and $\mathcal{H}^{s}|_{K}$ are equivalent, we obtain
part (\ref{enu:H^s a.s. stat}) of the corollary.

Write,
\[
F=\{x\in\mathbb{R}\::\:\dim_{H}(K_{z,x})=s-1\text{ and }\mathcal{P}^{s-1}(K_{z,x})>0\}\:.
\]
From $\nu(A\cap B)=1$ we get $P_{z}\nu(F)=1$. Hence by the Cauchy-Schwarz
inequality,
\[
1=P_{z}\nu(F)=\int1_{F}(x)g_{z}(x)\:dx\le\Vert1_{F}\Vert_{2}\cdot\Vert g_{z}\Vert_{2}\le C\Vert1_{F}\Vert_{2},
\]
which gives $\mathcal{L}(F)\ge C^{-2}$. This completes the proof
of part (\ref{enu:uni bound proj}) of the corollary with $c=C^{-2}$.
\end{proof}

\section{\label{sec:Proof-of equiv measures}Proof of part (\ref{enu:equiv of measures})
of Theorem \ref{thm:main_thm}}

The following proof is an adaptation of the argument in \cite[Proposition 3.1]{PSS}.
\begin{proof}[Proof of part (\ref{enu:equiv of measures}) of Theorem \ref{thm:main_thm}]
Let $\lambda\in\mathbb{D}\setminus\mathcal{E}$ be with $\arg\lambda\notin\pi\mathbb{Q}$,
$\Lambda$ a finite nonempty set, $\mathbf{a}\in\mathbb{C}^{\Lambda}$
such that $\mathcal{F}_{\lambda,\mathbf{a}}$ satisfies the SSC, and
$p\in\mathbb{P}_{\Lambda}$ with $D_{q}(\nu_{\lambda,\mathbf{a}}^{p})>1$
for some $1<q<\infty$. Set $K=K_{\lambda,\mathbf{a}}$, $\nu=\nu_{\lambda,\mathbf{a}}^{p}$,
and $f_{i}=f_{\lambda,\mathbf{a}}^{i}$ for $i\in\Lambda$. By part
(\ref{enu:L^q dens in each dir}) of Theorem \ref{thm:main_thm} it
follows $P_{z}\nu\in L^{q}(\mathbb{R})$ for each $z\in S$. Hence
it suffices to show that $\mathcal{L}|_{P_{z}(K)}\ll P_{z}\nu$ for
each $z\in S$.

By Proposition \ref{prop:bounded norms} there exists $C>0$ with
$\Vert g_{z}\Vert_{q}\le C$ for all $z\in S$, where $g_{z}=\frac{dP_{z}\nu}{d\mathcal{L}}$.
From
\[
\dim_{H}K\ge\dim_{H}\nu\ge D_{q}(\nu)>1
\]
and part (\ref{enu:uni bound proj}) of Corollary \ref{cor:cor from main thm},
it follows that there exists $c>0$ with
\begin{equation}
\mathcal{L}(P_{z}(K))>c\text{ for all }z\in S\:.\label{eq:uni bd on len of proj}
\end{equation}

Write,
\[
\beta=\sup\{\frac{\mathcal{L}(A)}{\mathcal{L}(P_{z}(K))}\::\:z\in S,\:A\subset P_{z}(K)\text{ is Borel, and }P_{z}\nu(A)=0\}\:.
\]
Assume by contradiction that $\beta=1$. Given $\epsilon>0$ there
exists $z\in S$ and a Borel set $A\subset P_{z}(K)$ with $P_{z}\nu(A)=0$
and $\frac{\mathcal{L}(A)}{\mathcal{L}(P_{z}(K))}>1-\epsilon$. We
have
\[
\mathcal{L}(P_{z}(K)\setminus A)<\epsilon\cdot\mathcal{L}(P_{z}(K)),
\]
hence by Hölder's inequality,
\begin{eqnarray*}
1 & = & P_{z}\nu(P_{z}(K)\setminus A)=\int_{P_{z}(K)\setminus A}g_{z}\:d\mathcal{L}\\
 & \le & \Vert g_{z}\Vert_{q}\cdot(\mathcal{L}(P_{z}(K)\setminus A))^{1-q^{-1}}<C\cdot(\epsilon\cdot\mathcal{L}(P_{z}(K)))^{1-q^{-1}}\:.
\end{eqnarray*}
For a sufficiently small $\epsilon>0$ this clearly yields a contradiction,
and so we must have $\beta<1$.

Fix $z\in S$ and let $A_{0}\subset P_{z}(K)$ be a Borel set with
$P_{z}\nu(A_{0})=0$. It suffices to prove that $\mathcal{L}(A_{0})=0$.
We will show that for each $x\in\mathbb{R}$
\begin{equation}
\underset{\delta\downarrow0}{\limsup}\:\frac{\mathcal{L}((x-\delta,x+\delta)\cap A_{0})}{2\delta}<1,\label{eq:density of A_0}
\end{equation}
and so that $A_{0}$ has no Lebesgue density points. From this and
Lebesgue's density theorem it follows that we must have $\mathcal{L}(A_{0})=0$.
If $x\notin P_{z}(K)$ then $(x-\delta,x+\delta)\cap A_{0}=\emptyset$
for all sufficiently small $\delta>0$, and so (\ref{eq:density of A_0})
is clearly satisfied.

Fix $x\in P_{z}(K)$ and $0<\delta<\frac{c}{2}$. By (\ref{eq:uni bd on len of proj})
there exists $m\ge1$ and $i_{1}...i_{m}=\boldsymbol{i}\in\Lambda^{m}$
such that $x\in P_{z}f_{\boldsymbol{i}}(K)\subset(x-\delta,x+\delta)$
and 
\begin{equation}
P_{z}f_{i_{1}...i_{m-1}}(K)\nsubseteq(x-\delta,x+\delta),\label{eq:proj cyl not contained}
\end{equation}
where recall that $f_{\boldsymbol{i}}=f_{i_{1}}\circ...\circ f_{i_{m}}$.
Let $\alpha\in S$ and $0<r<1$ be with $\lambda=r\alpha$ and set
$a=-P_{z}f_{\boldsymbol{i}}(0)$. Then
\begin{equation}
P_{z}f_{\boldsymbol{i}}=\tau_{a}M_{r^{m}}P_{\alpha^{-m}z},\label{eq:proj of map}
\end{equation}
where recall that $\tau_{a}(y)=y-a$ and $M_{r^{m}}(y)=r^{m}y$ for
$y\in\mathbb{R}$. By (\ref{eq:proj cyl not contained}),
\[
r^{m}\cdot diam(K)=r\cdot diam(f_{i_{1}...i_{m-1}}(K))\ge r\delta\:.
\]
From this, (\ref{eq:proj of map}), and (\ref{eq:uni bd on len of proj}),
\begin{equation}
\mathcal{L}(P_{z}f_{\boldsymbol{i}}(K))=r^{m}\cdot\mathcal{L}(P_{\alpha^{-m}z}(K))>\frac{r\delta c}{diam(K)}=b\delta,\label{eq:est of proj cyl}
\end{equation}
with $b=\frac{rc}{diam(K)}$.

Since $\nu$ is self-similar we have $P_{z}f_{\boldsymbol{i}}\nu\ll P_{z}\nu$.
From this and since $P_{z}\nu(A_{0}\cap P_{z}f_{\boldsymbol{i}}(K))=0$,
\begin{multline*}
0=P_{z}f_{\boldsymbol{i}}\nu(A_{0}\cap P_{z}f_{\boldsymbol{i}}(K))=\tau_{a}M_{r^{m}}P_{\alpha^{-m}z}\nu(A_{0}\cap\tau_{a}M_{r^{m}}P_{\alpha^{-m}z}(K))\\
=P_{\alpha^{-m}z}\nu(M_{r^{-m}}\tau_{-a}(A_{0})\cap P_{\alpha^{-m}z}(K))\:.
\end{multline*}
Now by the definition of $\beta$,
\[
\mathcal{L}(M_{r^{-m}}\tau_{-a}(A_{0})\cap P_{\alpha^{-m}z}(K))\le\beta\cdot\mathcal{L}(P_{\alpha^{-m}z}(K))\:.
\]
Hence,
\[
\mathcal{L}(A_{0}\cap\tau_{a}M_{r^{m}}P_{\alpha^{-m}z}(K))\le\beta\cdot\mathcal{L}(\tau_{a}M_{r^{m}}P_{\alpha^{-m}z}(K)),
\]
which gives,
\[
\mathcal{L}(A_{0}\cap P_{z}f_{\boldsymbol{i}}(K))\le\beta\cdot\mathcal{L}(P_{z}f_{\boldsymbol{i}}(K))\:.
\]
From this and (\ref{eq:est of proj cyl}) we get,
\begin{eqnarray*}
\mathcal{L}((x-\delta,x+\delta)\setminus A_{0}) & \ge & \mathcal{L}(P_{z}f_{\boldsymbol{i}}(K)\setminus A_{0})\\
 & = & \mathcal{L}(P_{z}f_{\boldsymbol{i}}(K))-\mathcal{L}(P_{z}f_{\boldsymbol{i}}(K)\cap A_{0})\\
 & \ge & (1-\beta)\mathcal{L}(P_{z}f_{\boldsymbol{i}}(K))\ge(1-\beta)b\delta\:.
\end{eqnarray*}
Since $(1-\beta)b>0$ does not depend on $\delta$ this gives (\ref{eq:density of A_0}),
which completes the proof.
\end{proof}

$\newline$\textsc{Centre for Mathematical Sciences,\newline Wilberforce Road, Cambridge CB3 0WA, UK}$\newline$$\newline$\textit{E-mail: }
\texttt{ariel.rapaport@mail.huji.ac.il}
\end{document}